\newcounter{noindnum}[subsection]
\renewcommand{\thenoindnum}{\alph{noindnum}}
\newcommand{\noindstep}{\refstepcounter{noindnum}{{\rm(}\thenoindnum}\/{\rm)} }
\newcommand{\stepzero}{\setcounter{noindnum}{0}}
\renewcommand{\phi}{\varphi}
\renewcommand{\epsilon}{\varepsilon}
\renewcommand{\emptyset}{\varnothing}
\newcommand{\bE}{\mathbf E}
\newcommand{\bG}{\mathbf G}
\newcommand{\bH}{\mathbf H}
\newcommand{\bP}{\mathbf P}
\newcommand{\bT}{\mathbf T}
\newcommand{\bU}{\mathbf U}
\newcommand{\bu}{\mathbf u}
\newcommand{\cA}{\mathcal A}
\newcommand{\cB}{\mathcal B}
\newcommand{\cE}{\mathcal E}
\newcommand{\cF}{\mathcal F}
\newcommand{\cG}{\mathcal G}
\newcommand{\cH}{\mathcal H}
\newcommand{\cO}{\mathcal O}
\newcommand{\cP}{\mathcal P}
\renewcommand{\P}{\mathbb P}
\newcommand{\A}{\mathbb A}
\newcommand{\Z}{\mathbb{Z}}
\newcommand{\fm}{\mathfrak m}
\DeclareMathOperator{\Proj}{Proj}
\DeclareMathOperator{\Pic}{Pic}
\DeclareMathOperator{\Rad}{Rad}
\DeclareMathOperator{\Gl}{Gl}
\DeclareMathOperator{\Nneib}{Neib}
\DeclareMathOperator{\wN}{\widetilde{Neib}}
\DeclareMathOperator{\Id}{Id}
\DeclareMathOperator{\Aut}{Aut}
\DeclareMathOperator{\Iso}{Iso}
\DeclareMathOperator{\spec}{Spec}
\DeclareMathOperator{\Hom}{Hom}
\DeclareMathOperator{\Ker}{Ker}
\theoremstyle{plain}
\newtheorem{theorem}{Theorem}
\newtheorem*{theorem*}{Theorem}
\newtheorem{conjecture}{Conjecture}
\newtheorem{proposition}{Proposition}[section]
\newtheorem{lemma}[proposition]{Lemma}
\newtheorem{corollary}{Corollary}
\newtheorem*{corollary*}{Corollary}
\theoremstyle{definition}
\newtheorem{definition}[proposition]{Definition}
\newtheorem{construction}[proposition]{Construction}
\theoremstyle{remark}
\newtheorem{remark}[proposition]{Remark}
\newtheorem*{remarks*}{Remarks}
\newtheorem*{remark*}{Remark}
\begin{document}

\title[A proof of the Grothendieck--Serre conjecture]{A proof of the Grothendieck-Serre conjecture on principal bundles over regular local rings containing infinite fields}

\keywords{Reductive group schemes; Principal bundles}

\begin{abstract}
Let $R$ be a regular local ring containing an infinite field. Let $\bG$ be a reductive group scheme over~$R$. We prove that a principal $\bG$-bundle over~$R$ is trivial if it is trivial over the fraction field of $R$. In other words, if $K$ is the fraction field of $R$, then the map of non-abelian cohomology pointed sets
\[
  H^1_{\text{\'et}}(R,\bG)\to H^1_{\text{\'et}}(K,\bG)
\]
induced by the inclusion of $R$ into $K$ has a trivial kernel.
\end{abstract}

\author{Roman Fedorov}
\email{rmfedorov@gmail.com}
\address{Mathematics Department, 138 Cardwell Hall, Kansas State University, Manhattan, KS 66506, USA}

\author{Ivan Panin}
\email{paniniv@gmail.com}
\address{Steklov Institute of Mathematics at St.-Petersburg, Fontanka 27, St.-Petersburg 191023, Russia}

\maketitle

\section{Introduction}
Assume that $U$ is a regular scheme. Let $\bG$ be a reductive $U$-group scheme, that is, $\bG$ is affine and smooth as a $U$-scheme and, moreover, the geometric fibers of $\bG$ are connected reductive algebraic groups (see~\cite[Exp.~XIX, Def.~2.7]{SGA3-3}).

Recall that a $U$-scheme $\cG$ with an action of $\bG$ is called \emph{a principal $\bG$-bundle over~$U$}, if $\cG$ is faithfully flat and quasi-compact over $U$ and the action is simply transitive, that is, the natural morphism $\bG\times_U\cG\to\cG\times_U\cG$ is an isomorphism  (see~\cite[Sect.~6]{FGA}). It is well known that such a bundle is trivial locally in the \'etale topology but in general not in the Zariski topology. Grothendieck and Serre conjectured that if $\cG$ is generically trivial, then it is locally trivial in the Zariski topology (see~\cite[Remarque, p.31]{Se}, \cite[Remarque 3, p.26-27]{Gr1}, and~\cite[Remarque~1.11.a]{Gr2}). More precisely, the following conjecture is widely attributed to them.
\begin{conjecture}\label{conj}
Let $R$ be a regular local ring, let $K$ be its field of fractions. Let~$\bG$ be a reductive group scheme over $U:=\spec R$, let $\cG$ be a principal $\bG$-bundle. If~$\cG$ is trivial over $\spec K$, then it is trivial. Equivalently, the map of non-abelian cohomology pointed sets
\[
  H^1_{\text{\'et}}(R,\bG)\to H^1_{\text{\'et}}(K,\bG)
\]
induced by the inclusion of $R$ into $K$ has a trivial kernel.
\end{conjecture}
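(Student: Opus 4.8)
\textbf{The plan} is to reduce the conjecture to a geometric situation over a prime base and there to prove the stronger statement that $\cG$, restricted to a suitable affine-line fibration, descends from the base, by an inductive fibration argument. For the reduction I would invoke Popescu's desingularization theorem together with a standard limit argument — using that $\bG$, $\cG$ and the generic trivialization are all defined over a finitely generated subring — to replace $R$ by the local ring $\cO_{X,x}$ at a point of a scheme $X$ smooth over a prime ring $B$ (a finite field, $\mathbb{Q}$, or $\Z$). In equicharacteristic this is clean, since a regular local ring containing $\mathbb{F}_p$ or $\mathbb{Q}$ is geometrically regular over that perfect subring; in mixed characteristic it works only in the unramified range $p\notin\fm^2$, and the ramified case genuinely requires a different argument — combining the unramified case with deformation-theoretic and approximation techniques — rather than a reduction to something smooth over $\Z$.

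The decisive input in the geometric situation is a presentation lemma of Quillen--Gabber / Ojanguren--Panin type, refined to carry the group $\bG$ (the ``nice triples'' of Panin--Stavrova--Vavilov). Writing $d$ for the relative dimension of $X\to B$ and $Z\subset X$ for the closed locus where $\cG$ is non-trivial — pure of codimension one, since $\cG$ is generically trivial — I would, after shrinking $X$ around $x$, produce a smooth morphism $q\colon X\to S$ with $S$ open in $\A^{d-1}_B$, admitting a section $s$ through $x$, with $Z$ finite over $S$ and disjoint from $s(S)$, and with $\bG$ controlled along $q$ (pulled back from $S$ after a further \'etale localization, or kept constant relative to $S$ via the nice-triple data). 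Over infinite fields the hyperplane and Noether-normalization choices here are ordinary Bertini arguments; over finite fields one substitutes Poonen's Bertini theorem; over $\Z$ one needs the corresponding arithmetic refinements.

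Given such a presentation, I would compactify $X$ to a smooth projective relative curve $\bar X$ over $S$ and run a Horrocks-type patching: since $\cG$ is already trivial on a Zariski neighbourhood of the section $s(S)$ and on the finite $S$-scheme $Z$, one glues the trivial bundle near $s(S)$ to $\cG$ on the complement, obtaining that $\cG|_X$ is pulled back from a torsor on $S$. Localizing at $x$ and using generic triviality, $\cG_x$ becomes extended from $\cO_{S,q(x)}$, so induction on $d=\dim R$ closes the loop; the base case $d\le 1$ — fields and discrete valuation rings — is classical (for DVRs this is essentially Nisnevich's theorem, via Bruhat--Tits theory).

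\textbf{The hard part} is carrying out the patching when $\bG$ is \emph{not} constant: one needs $\bG$ extended as a reductive group scheme over the entire relative curve $\bar X$, with enough rigidifying structure (a maximal torus, or at least a parabolic) to perform the gluing, and arranging this compatibly with the presentation lemma is the technical core of the argument. A second obstacle, absent over infinite fields, is the failure of ordinary Bertini over finite and arithmetic bases; this forces the finite-residue-field and mixed-characteristic cases to be supplemented by independent inputs — on torsors over two-dimensional regular rings, and on the Nisnevich-local structure of reductive group schemes — so that the full conjecture, as opposed to its infinite-field case, requires genuinely more than the fibration method alone.
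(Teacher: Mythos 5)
Your skeleton — Popescu plus a limit argument to reduce to the geometric case, a presentation lemma of Ojanguren--Panin/``nice triples'' type putting the bad locus $Z$ finite over a base, and a Horrocks-type patching on a compactified relative curve — is indeed the framework of the paper (and of Colliot-Th\'el\`ene--Ojanguren and Panin--Stavrova--Vavilov before it). But there is a genuine gap at the decisive step, and it is precisely where this paper goes beyond the earlier literature. For your patching to yield a trivial bundle one needs, by the Horrocks-type theorem actually available (Theorem~7.6 of~\cite{PSV}, quoted here as Proposition~\ref{pr:trivclsdfbr}), that the reglued bundle be trivial on every closed fibre of the relative $\P^1$, and one needs enough freedom in the gluing datum to arrange this. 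When $\bG$ admits a proper parabolic this is done by modifying the transition function at the section at infinity using the elementary subgroup generated by the unipotent radicals of two opposite parabolics — this is what your phrase ``a maximal torus, or at least a parabolic, to perform the gluing'' gestures at, and it is exactly~\cite[Theorem~1.3]{PSV}. But a simple simply connected $R$-group scheme, and its fibres at closed points, can be anisotropic; then no parabolic exists, and the restriction of $\cG$ to $\A^1_U$ need not be trivial at all (see~\cite{FedorovExotic}). Your proposal offers no mechanism for this case, which is the main new content of the paper: one replaces the section at infinity by a closed subscheme $Y\subset\P^1_U$ finite \'etale over $U$ (a multisection), produced by a Bertini argument inside the scheme of parabolic subgroup schemes so that $\bG_Y$ is isotropic and $Y_u$ has a rational point whenever $\bG_u$ is isotropic; one then modifies $\cG$ along the henselization of the pair $(\A^1_U,Y)$, using Gille's decomposition $\bG(L)=\bE(L)\bG(\mathcal O)$ over henselian discrete valuation rings together with the surjectivity of $\bU^{\pm}$ on closed subschemes of affine schemes to lift the required transition function from the closed fibre. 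The conclusion is correspondingly weaker — $\cG$ becomes trivial only on $\P^1_U-Y$ rather than extended from $U$ — but that suffices because the section avoids $Y$.

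Two further discrepancies. The paper does not induct on $d=\dim R$: the presentation machinery (\cite[Theorem~1.2]{PSV}) is invoked once to realize $\cE$ as $s^*\cE_t$ for a bundle $\cE_t$ on $\A^1_U$ over the semi-local $U$ itself, trivial away from a monic polynomial, and the rest of the argument lives on $\P^1_U$; your intermediate claim that $\cG|_X$ is pulled back from $S$ over all of $X$ is too strong (extendedness is only obtained after localizing the base, and the reglued bundle is in any case a different bundle from $\cG$, agreeing with it only off the gluing locus, so one cannot conclude anything about $\cG$ itself without the ``trivial on closed fibres'' step). Finally, the statement as posed is the full conjecture, whereas the paper proves only the case of an infinite base field; your caveats about finite residue fields and mixed characteristic are fair, but even granting an infinite field the argument is incomplete for the anisotropic reason above.
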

The main result of this paper is the following theorem.
\begin{theorem*}
The above conjecture holds if $R$ is a regular local ring containing an infinite field.
\end{theorem*}
The theorem has the following corollary.
\begin{corollary*}
Notation as in the conjecture, two principal $\bG$-bundles over $U$ that become isomorphic upon restriction to $\spec K$ are isomorphic.
\end{corollary*}
This result is new even for constant group schemes (that is, for group schemes coming from the ground field).

\subsection{History of the topic}
In his 1958 paper Jean--Pierre Serre asked whether a principal bundle is Zariski locally trivial, once it has a rational section  (see~\cite[Remarque, p.31]{Se}). In his setup the group is any algebraic group over an algebraically closed field. He gave an affirmative answer to the question when the group is $PGL(n)$ (see~\cite[Prop.~18]{Se}) and when the group is an abelian variety (see~\cite[Lemme~4]{Se}). In the same year, Alexander Grothendieck asked a similar question (see~\cite[Remarque 3, p.26-27]{Gr1}).

A few years later, Grothendieck conjectured that the statement is true for any semi-simple group scheme over any regular scheme (see~\cite[Remarque~1.11.a]{Gr2}). Now by the Grothendieck--Serre conjecture we mean Conjecture~\ref{conj} though this may be slightly inaccurate from historical perspective. Many results corroborating the conjecture are known.

\smallskip $\bullet$ For some simple group schemes of classical series the conjecture is solved in works of the second author, A.~Suslin, M.~Ojanguren, and K.~Zainoulline; see~\cite{Oj1}, \cite{Oj2}, \cite{PS}, \cite{OP2}, \cite{Z}, \cite{OPZ}.

\smallskip $\bullet$ The case of an arbitrary reductive group scheme over a discrete valuation ring or over a Henselian ring is completely solved by Y.~Nisnevich in~\cite{Ni1}. He also proved the conjecture for two-dimensional local rings in the case when $\bG$ is quasi-split in~\cite{Ni2}.

\smallskip $\bullet$ The case where $\bG$ is an arbitrary torus over a regular local ring was settled by J.-L.~Colliot-Th\'{e}l\`{e}ne and J.-J.~Sansuc in~\cite{C-T-S}.

\smallskip $\bullet$ The case where the group scheme $\bG$ comes from an infinite ground field is completely solved by J.-L.~Colliot-Th\'el\`ene, M.~Ojanguren, and M.~S.~Raghunathan in~\cite{C-TO} and \cite{R1,R2}; O.~Gabber announced a proof for group schemes coming from arbitrary ground fields.

\smallskip $\bullet$ Under an isotropy condition on $\bG$ the conjecture is proved in a series of preprints~\cite{PSV} and~\cite{Pa2}.

\smallskip $\bullet$ The case of strongly inner simple adjoint group schemes of types $E_6$ and $E_7$ is done by the second author, V.~Petrov, and A.~Stavrova in~\cite{PPS}. No isotropy condition is imposed there.

\smallskip $\bullet$ The case when $\bG$ is of type $F_4$ with trivial $g_3$-invariant and the field is of characteristic zero is settled by V.~Chernousov in~\cite{Chernous}; the case when $\bG$ is of type $F_4$ with trivial $f_3$-invariant and the field is infinite and perfect is settled by V.~Petrov and A.~Stavrova in~\cite{PetrovStavrova}.

In the case of anisotropic group schemes the conjecture remained wide open in many cases, in particular, for group schemes of types $D_n$, $F_4$, and $E_8$. We will present a uniform proof.

After the first version of the current preprint was posted, the conjecture was solved for regular local rings containing finite fields by the second author~\cite{PaninFiniteFieldsLAG}. Thus, the conjecture holds for regular local rings containing a field. In the mixed characteristic case the conjecture was solved by the first author provided that~$\bG$ is split and some strong conditions on the local ring are satisfied~\cite{FedorovMixedChar}.

\subsection{Overview of the proof}
Very roughly, the idea of the proof is to relate the problem of triviality of the original principal bundle to the triviality of a~principal bundle over the affine line over $U$ (see Theorem~\ref{th:psv}) and then to triviality of a principal bundle over the projective line over $U$ (see Theorem~\ref{MainThm2}). The first reduction is based on the geometric part of the paper~\cite{PSV} by the second author with A.~Stavrova and N.~Vavilov. We also use results of the second author~\cite{Pa2} to reduce our problem to the case when $\bG$ is simple and simply-connected (at a price of replacing a local ring by semi-local). Also, by a result of Popescu~\cite{P,Sw,SpivakovskyPopescu} we may assume that $U$ is of geometric origin.

The proof of Theorem~\ref{MainThm2} is inspired by the theory of affine Grassmannians. We do not use the affine Grassmannians explicitly in this paper, however, the interested reader is invited to look at~\cite{FedorovExotic}, where an alternative proof of our Theorem~\ref{MainThm2} is sketched.

\subsection{Acknowledgments}
The authors collaborated during Summer schools ``Contemporary Mathematics'', organized by Moscow Center for Continuous Mathematical Education. They would like to thank the organizers of the school for the perfect working atmosphere. The authors benefited from talks with A.~Braverman. They also thank J.-L.~Colliot-Th{\'e}l{\`e}ne and P.~Gille for their interest to the topic.

A part of the work was done, while the first author was a member of Max Planck Institute for Mathematics in Bonn. The first author also benefited from talks with D.~Arinkin.
The first author was partially supported by the NSF grant DMS-1406532. The second author is very grateful to A.~Stavrova and N.~Vavilov for a stimulating interest to the topic. He also thanks the RFBR-grant 13--01--00429--a for the support. Theorem~\ref{th:psv} is proved with the support of the Russian Science Foundation (grant no.~14--11--00456).

The authors would like to thank the anonymous referees for valuable suggestions. The final publication is available at Springer via http://dx.doi.org/10.1007/s10240-\\ 015-0075-z.

\section{Main results}\label{Introduction}
The theorem from the introduction follows from a slightly more general result.

\begin{theorem}\label{MainThm1}
Let $R$ be a regular semi-local domain containing an infinite field, and let $K$ be its field of fractions. If $\bG$ is
a reductive group scheme over $R$, then the map
\[
  H^1_{\text{\'et}}(R,\bG)\to H^1_{\text{\'et}}(K,\bG)
\]
\noindent
induced by the inclusion of $R$ into $K$ has a trivial kernel. In other words, under the above assumptions on $R$ and $\bG$, each principal $\bG$-bundle over $R$ having a $K$-rational point is trivial.
\end{theorem}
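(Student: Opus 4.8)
The plan is to reduce the general statement to a geometric assertion about torsors over the affine line and then solve that geometric problem using a variant of Quillen's patching together with results from \cite{PSV}. \emph{First}, I would reduce the semi-local case to a relative situation: by a standard limit argument and Popescu-type approximation, one can assume that $R$ is the semi-local ring of finitely many closed points on a smooth affine variety $X$ over an infinite field $k$. Then, following the Colliot-Th\'el\`ene--Ojanguren strategy and its subsequent refinements, the key is to establish a \emph{geometric presentation lemma}: given a principal $\bG$-bundle $\cG$ over $X$ that is trivial at the generic point, one wants, after shrinking $X$ around the finitely many closed points, to find a smooth morphism $X\to S$ of relative dimension one (essentially making $X$ an open subscheme of $\A^1_S$), a section $S\to X$, and a divisor $Z\subset X$ finite over $S$, such that $\cG$ is trivial away from $Z$ and the whole configuration extends the data over the section. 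This is precisely where the geometric part of \cite{PSV} enters.

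\emph{Second}, with such a presentation in hand, the problem becomes one about $\bG$-bundles on (an open subset of) $\A^1_S$ for $S$ a semi-local ring, trivial outside a divisor $Z$ finite over $S$. Here the affine Grassmannian viewpoint is decisive: a bundle trivial outside $Z$ is described by a gluing datum, i.e.\ a point of the affine Grassmannian of $\bG$ over the completion along $Z$, and one wants to show that such a bundle, if generically trivial, is Zariski-locally trivial over $S$ — in fact extends to a bundle on $\P^1_S$ that is trivial at infinity, hence (by a Horrocks-type / Quillen patching argument for reductive groups) constant along $\A^1_S$ and therefore pulled back from $S$. Evaluating at the section $S\hookrightarrow X$ then shows the original bundle over $R$ is trivial. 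The torus case of \cite{C-T-S} and Nisnevich's DVR result \cite{Ni1} are used to handle the local pieces and the behavior at $Z$.

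\emph{Third}, one must be careful that $\bG$ is only a group scheme over $X$, not over $S$; so one needs either to spread $\bG$ out so that it descends to $S$ after shrinking, or to work with the family of groups. This is a technical but essential point, and it is handled by another application of the presentation lemma, arranging that $\bG$ itself, together with $\cG$, comes from $S$ near the section.

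The \textbf{main obstacle} I expect is the geometric presentation step: constructing the fibration $X\to S$ so that simultaneously (i) the bundle $\cG$ is trivial on the complement of a divisor finite and flat over $S$, (ii) the group scheme $\bG$ descends, and (iii) everything is compatible with a chosen section through the given closed points. Over an infinite field one has enough room to move linear systems (Bertini-type genericity), but making the argument work in the semi-local — as opposed to purely local — setting, and doing it without a smooth compactification available a priori, requires the careful normalization and Noether-type projection arguments that occupy the geometric heart of \cite{PSV}. Once the presentation is available, the affine-Grassmannian/Horrocks patching argument is comparatively formal, relying on the observation that a reductive-group bundle on $\A^1$ over a local ring that is trivial at one rational point and trivial outside a finite divisor is necessarily trivial.
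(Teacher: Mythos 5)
Your first stage (Popescu limit argument to reduce to the semi-local ring of closed points on a smooth affine variety over an infinite field, then the geometric presentation from~\cite{PSV} producing $\A^1_U\to U$, a section, a finite divisor $Z$, and triviality off $Z$) matches the paper's route through Theorem~\ref{th:psv}, and the reduction from reductive to simple simply-connected via~\cite{Pa2} is also as in the paper. The gap is in your second stage.

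You rest the endgame on ``a Horrocks-type / Quillen patching argument'' and, explicitly, on ``the observation that a reductive-group bundle on $\A^1$ over a local ring that is trivial at one rational point and trivial outside a finite divisor is necessarily trivial.'' That observation is \emph{false} when $\bG$ is anisotropic. There are principal $\bG$-bundles on $\P^1_U$ ($U$ local, $\bG$ simple simply-connected but anisotropic over $U$) which are trivial in a neighborhood of infinity and trivial off a finite divisor, yet are \emph{not} trivial on $\A^1_U$; this is precisely the obstruction the paper flags, pointing to~\cite{FedorovExotic}. The Horrocks-type extension-and-patching conclusion you invoke is exactly~\cite[Theorem~1.3]{PSV}, which requires an \emph{isotropy} hypothesis on $\bG$; it cannot be applied in the general case and this is where the previously known results stopped.

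What the paper actually does at this point — and what is its main new contribution, rather than a ``comparatively formal'' step — is to replace the section at infinity by a carefully chosen closed subscheme $Y\subset\P^1_U$, finite and \'etale over $U$, disjoint from $Z$, such that $\bG_Y:=\bG\times_UY$ \emph{is} isotropic (such $Y$ always exists: one intersects the scheme of parabolics of a fixed type with a suitable Bertini-generic family of quadrics). Theorem~\ref{MainThm2} then proves that $\cG$ is trivial on $\P^1_U-Y$, which suffices because the section lies in $\P^1_U-Y$. The proof modifies the gluing datum of $\cG$ along the punctured henselization $\dot Y^h$ so as to kill the restriction to the closed fibers, and the crucial algebraic input is the decomposition $\bG(\dot y^h)=\bE(\dot y^h)\,\bG(y^h)$ of Gille~\cite[Fait~4.3, Lemme~4.5]{Gille:BourbakiTalk}, where $\bE$ is the elementary subgroup generated by the unipotent radicals of a pair of opposite parabolic subgroup schemes of $\bG_Y$ — available exactly because $\bG_Y$ is isotropic. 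Without the passage from a $U$-section to a finite \'etale $Y$ with $\bG_Y$ isotropic, there is no group of elementary elements to work with, and the modification argument cannot be run. So the key idea that makes the anisotropic case go through is absent from your proposal.
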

Theorem~\ref{MainThm1} has the following corollary.
\begin{corollary}\label{corollary}
Under the same hypothesis as in Theorem~\ref{MainThm1}, the map
\[
  H^1_{\text{\'et}}(R,\bG)\to H^1_{\text{\'et}}(K,\bG)
\]
\noindent
induced by the inclusion of $R$ into $K$ is injective. Equivalently, two principal $\bG$-bundles over $R$ that become isomorphic upon restriction to $K$ are isomorphic.
\end{corollary}
\begin{proof}
Let $\cG_1$ and $\cG_2$ be two principal $\bG$-bundles over $U:=\spec R$. Assume that $\cG_1$ and $\cG_2$ are isomorphic over $\spec K$. Recall that the functor sending a $U$-scheme $T$ to the set of isomorphisms of principal $\bG$-bundles $\cG_1\times_UT\to\cG_2\times_UT$ is represented by an affine $U$-scheme $\Iso(\cG_1,\cG_2)$. Consider also the scheme $\Aut\cG_2:=\Iso(\cG_2,\cG_2)$ of $\bG$-bundle automorphisms of $\cG_2$. It is a reductive group scheme because it is \'etale locally over $R$ isomorphic to $\bG$.

It is easy to see that $\Iso(\cG_1,\cG_2)$ is a principal $\Aut\cG_2$-bundle. By Theorem~\ref{MainThm1} it is trivial, and we see that $\cG_1\cong\cG_2$.
\end{proof}

While Theorem~\ref{MainThm1} was previously known for reductive group schemes $\bG$ coming from the ground field (see~\cite{C-TO,R1,R2}), in certain cases the corollary is a new result even for such group schemes. For example, it was not known for split group schemes $\bG$ of type $E_8$. Also, the corollary was not known for $\mathop{\mathrm{Spin}}(A,\sigma)$, where $A$ is a skew-field over a field $k$ ($\mathop{\mathrm{char}} k\ne2$) and $\sigma$ is an involution of orthogonal type on $A$.

For a scheme $U$ we denote by $\A^1_U$ the affine line over $U$ and by $\P^1_U$ the projective line over $U$. If $T$ is a $U$-scheme, we will use the term ``principal $\bG$-bundle over $T$'' to mean a principal $\bG\times_UT$-bundle over $T$.

In Section~\ref{sect:redtopsv} we deduce Theorem~\ref{MainThm1} from the following result of independent interest (cf.~\cite[Thm.~1.3]{PSV}).

\begin{theorem}\label{th:psv}
Let $R$ be the semi-local ring of finitely many closed points on an irreducible smooth affine variety over an infinite field $k$ and set $U=\spec R$. Let $\bG$ be a~simple, simply-connected group scheme over $U$ {\rm(}see~\cite[Exp.~XXIV, Sect.~5.3]{SGA3-3} for the definition{\rm)}. Let $\cE_t$ be a principal $\bG$-bundle over the affine line $\A^1_U=\spec R[t]$, and let $h(t)\in R[t]$ be a monic polynomial. Denote by $(\A^1_U)_h$ the open subscheme in $\A^1_U$ given by $h(t)\ne0$ and assume that the restriction of $\cE_t$ to $(\A^1_U)_h$ is a trivial principal $\bG$-bundle. Then for each section $s:U\to\A^1_U$ of the projection $\A^1_U\to U$ the $\bG$-bundle $s^*\cE_t$ over $U$ is trivial.
\end{theorem}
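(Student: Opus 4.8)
The plan is to reduce the statement to a "nice triple" in the sense of the geometric presentation lemmas of \cite{PSV}, and then to exploit the fact that a principal bundle trivial on an appropriate affine curve—together with control over its behaviour at infinity—can be glued to a bundle on the projective line that is Zariski-locally trivial, hence trivial by a theorem in the spirit of Horrocks' lemma for reductive group schemes over a local base.

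\textbf{Step 1: Setting up the geometry.} First I would replace the pair $(\A^1_U,\cE_t)$ by a more tractable geometric model. The key input is that $R$ is the semi-local ring of finitely many closed points on a smooth affine $k$-variety $X$. Using the geometric presentation results of \cite{PSV} (Quillen-patching-type and Nisnevich-neighbourhood-type constructions), I expect to arrange an elementary fibration, i.e. a smooth morphism of relative dimension one $p\colon \cX\to S$ with a good compactification $\bar\cX\supset\cX$ such that $\bar\cX\setminus\cX$ is finite over $S$, together with a finite morphism $\cX\to\A^1_S$ which is étale along the relevant locus, and such that $U$ embeds as a section meeting things transversally. The bundle $\cE_t$ pulls back to a bundle $\cE$ on $\cX$, and the hypersurface $h(t)=0$ pulls back to a divisor $\cD$ finite over $S$; the triviality of $\cE_t$ over $(\A^1_U)_h$ translates into triviality of $\cE$ on an open set containing $\cD$ and containing the closed fiber over the section. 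The point of this step is to trade the single affine line $\A^1_U$ for a curve over a (possibly bigger but still well-behaved) semi-local base $S$, on which the geometry of \cite{PSV} gives a genuine handle.

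\textbf{Step 2: Extending to the projective line and gluing.} Next I would push the bundle $\cE$ forward along the finite map to $\A^1_S$, obtaining (after possibly twisting) a $\bG$-bundle $\cF$ on $\A^1_S$, or rather on an open neighbourhood of $\{0\}\times S\cup\cD$. Because $\cE$ is trivial away from the section over $S$ and away from $\cD$, one can glue $\cF$ with the trivial bundle over $\P^1_S\setminus\{0\}$ along the overlap—where both are trivial, the transition cocycle landing in $\bG(\cO(\P^1_S\setminus\{0,\infty\})_{\text{loc}})$—to produce a $\bG$-bundle $\bar\cF$ on $\P^1_S$. By construction $\bar\cF|_{\{\infty\}\times S}$ is trivial, and $\bar\cF|_{\{0\}\times S}\cong s^*\cE_t$.

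\textbf{Step 3: Triviality on $\P^1$ over a semi-local base.} The final step is to show that a $\bG$-bundle on $\P^1_S$ which is trivial on the section at infinity (and, say, trivial on a Zariski-dense open) is already the pullback of a bundle from $S$, hence determined by any section. For $\bG$ simple simply-connected this is where one invokes the affine-Grassmannian philosophy alluded to in the introduction: a $\bG$-bundle on $\P^1$ trivial at $\infty$ is classified by a map to the affine Grassmannian $\mathrm{Gr}_{\bG}$, and triviality on an open set forces the corresponding "modification" to be trivial, using simple connectedness to kill the relevant $\pi_1$-obstruction. Concretely, I would argue that $\bar\cF$ has the same class on $\{0\}\times S$ as on $\{\infty\}\times S$—either by a rational-equivalence/$\A^1$-homotopy argument on $H^1_{\text{ét}}(-,\bG)$ over the semi-local base, or by directly showing $\bar\cF\cong p^*(\bar\cF|_{\{\infty\}\times S})$—whence $s^*\cE_t$ is trivial.

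\textbf{Main obstacle.} The hard part will be Step 2–Step 3 interface: controlling the behaviour of the bundle "at infinity" after the finite pushforward, so that the glued bundle on $\P^1_S$ really is trivial along $\{\infty\}\times S$, and not merely on a smaller open set. This requires that the finite morphism $\cX\to\A^1_S$ be chosen so that $\cD$ and the section are disjoint from the fiber over $\infty$, and that the compactification $\bar\cX$ be compatible with all these choices simultaneously—a delicate application of the moving lemmas of \cite{PSV} combined with a careful normalization so that $h(t)$ monic gives properness of $\cD\to S$. A secondary difficulty is the passage from $S$ (which one wants semi-local) back to $U$: the base-change constructions may enlarge $S$, and one must check that triviality over the enlarged base still yields triviality of $s^*\cE_t$ over the original $U$, which should follow from functoriality of the whole construction with respect to the section $s$.
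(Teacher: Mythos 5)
Your Step~3 is where the argument breaks, and the break is fatal as stated. You assert that a $\bG$-bundle on $\P^1_S$ which is trivial along $\{\infty\}\times S$ and on a Zariski-dense open is pulled back from $S$, invoking simple connectedness to ``kill the relevant $\pi_1$-obstruction.'' This Horrocks-type claim is false for anisotropic $\bG$. Already for a constant, simple, simply-connected, anisotropic group $G$ over a field $k$, the Grothendieck–Harder classification gives nontrivial $G$-bundles on $\P^1_k$; every such bundle is trivial on $\A^1_k$ and on $\P^1_k\setminus\{0\}$ (Birkhoff), yet not globally trivial. Simple connectedness does not make the affine Grassmannian a point, so there is no $\pi_1$-type vanishing that rescues this. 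Restricting your $\bar\cF$ to a closed fibre $\P^1_u$ over $u\in S$ exhibits exactly this obstruction, and nothing in your construction controls the isomorphism class of $\bar\cF|_{\P^1_u}$. Consequently you cannot conclude $\bar\cF\cong p^*(\bar\cF|_{\{\infty\}\times S})$, and hence cannot identify the restrictions to the two sections.

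What the paper does instead is precisely engineered to dodge this. It introduces an auxiliary closed subscheme $Y\subset\P^1_U$, finite étale over $U$, disjoint from $Z:=\{h=0\}\cup s(U)$, with the two crucial properties: $\bG_Y$ is \emph{isotropic}, and over every closed point $u$ with $\bG_u$ isotropic there is a $k(u)$-rational point of $Y$. The bundle $\cE_t$ is glued (with the trivial bundle on $\P^1_U-Z$) to a bundle $\cG$ on $\P^1_U$, and the real work is Theorem~2.5 (Theorem~\ref{MainThm2}): one \emph{modifies} $\cG$ along $Y$ — via the henselization $Y^h$ of $(\A^1_U,Y)$, or equivalently via formal loops — so as to make the restrictions to the closed fibres $\P^1_u$ trivial; only then does the Horrocks-type statement (Proposition~\ref{pr:trivclsdfbr}, i.e.\ Theorem~7.6 of~\cite{PSV}, which \emph{requires} triviality on closed fibres as a hypothesis) apply. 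The triviality-on-closed-fibres step uses~\cite[Corollary~3.10(a)]{GilleTorseurs}, and the modification uses the decomposition of $\bG_\bu(\dot Y^h_\bu)$ into an ``elementary'' part and an ``integral'' part (Fait~4.3 and Lemme~4.5 of~\cite{Gille:BourbakiTalk}), which is where simple connectedness actually enters. None of this structure — the auxiliary $Y$, the modification at $Y$, the control over closed fibres — is present in your sketch, and it cannot be recovered from what you wrote.

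A secondary, non-fatal issue: your Step~1 re-applies the geometric presentation lemmas of~\cite{PSV} to the pair $(\A^1_U,\cE_t)$. That is redundant. The hypotheses of Theorem~\ref{th:psv} are already the \emph{output} of~\cite[Theorem~1.2]{PSV} (used in Proposition~\ref{pr:geometric} to produce $\cE_t$, $h$, $s$ from a bundle over $U$); the deduction of Theorem~\ref{th:psv} from Theorem~\ref{MainThm2} requires no further geometric reduction. Likewise the finite pushforward to $\A^1_S$ in your Step~2 is unnecessary: the paper simply glues $\cE_t$ with the trivial bundle off $Z$ directly.
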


The derivation of Theorem~\ref{MainThm1} from Theorem~\ref{th:psv} is based on results of the second author, A.~Stavrova, and N.~Vavilov, namely, on~\cite{Pa2} and~\cite[Thm.~1.2]{PSV}.

Let $Y$ be a semi-local scheme. We will call a simple $Y$-group scheme isotropic if its restriction to each connected component of $Y$ contains a proper parabolic subgroup scheme. (Note that by~\cite[Exp.~XXVI, Cor.~6.14]{SGA3-3}
this is equivalent to the usual definition, that is, to the requirement that the group scheme contains a torus isomorphic to~$\mathbb G_{m,Y}$.) Theorem~\ref{th:psv} is, in turn, derived from the following statement.

\begin{theorem}\label{MainThm2}
Let $R$ be the semi-local ring of finitely many closed points on an irreducible smooth affine variety over an infinite field $k$ and set $U=\spec R$. Let~$\bG$ be a simple, simply-connected group scheme over $U$.

Let $Z\subset\P^1_U$ be a closed subscheme finite over $U$. Let $Y\subset\P^1_U$ be a closed subscheme \'etale over $U$. Assume that $Y\cap Z=\emptyset$, and $\bG_Y:=\bG\times_UY$ is isotropic.
Assume also that for every closed point $u\in U$ such that the algebraic group $\bG_u:=\bG|_u$ is isotropic, there is a $k(u)$-rational point in $Y_u:=\P^1_u\cap Y$. {\rm(}Here $k(u)$ is the residue field of $u$.{\rm)}

Let $\cG$ be a~principal $\bG$-bundle over $\P^1_U$ such that its restriction to $\P^1_U-Z$ is trivial. Then the restriction of $\cG$ to $\P^1_U-Y$ is also trivial.
\end{theorem}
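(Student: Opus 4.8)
The plan is to transport the gluing data of $\cG$ from the ``bad'' locus $Z$ to the ``good'' locus $Y$ and there to dispose of it, using that $\bG_Y$ is isotropic. First, some normalizations. Since $Z$ and $Y$ are finite over the semi-local scheme $U$, are disjoint, and every residue field $k(u)$ is infinite (because $R$ contains the infinite field $k$), a coordinate change together with a Chinese remainder argument yields a section $\infty\colon U\to\P^1_U$ disjoint from $Z\cup Y$; then $\A^1_U:=\P^1_U-\infty(U)$ is affine over $U$, contains $Z$ and $Y$, and $\cG$ is trivial on a neighbourhood of $\infty(U)$, being trivial on $\P^1_U-Z$. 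Enlarging $Z$, which only weakens the hypothesis, I may assume $Z=V(h)$ for a monic $h\in R[t]$ with $V(h)\cap Y=\emptyset$; then $\P^1_U-Z$, $\P^1_U-Y$ and $\P^1_U-Y-Z$ are all affine over $U$, each being the complement of a relatively ample effective Cartier divisor. (If $Y=\emptyset$ the hypotheses force $\bG$ to be anisotropic at every closed point of $U$, a degenerate case I would treat separately; so assume $Y\to U$ is finite \'etale surjective.)

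Next I would reformulate triviality in terms of the affine Grassmannian at $Z$. Fixing a trivialization $\rho$ of $\cG$ over $\P^1_U-Z$ and completing $\P^1_U$ along $Z$, Beauville--Laszlo-type descent encodes $(\cG,\rho)$ by a point $[g]$ of $\mathrm{Gr}_{\bG,Z}(U)$, represented by an element $g$ of $\bG$ of the punctured formal neighbourhood of $Z$, well-defined modulo $\bG$ of the formal neighbourhood; $\cG$ is recovered by gluing the trivial bundles on $\P^1_U-Z$ and on the formal neighbourhood of $Z$ along $g$. In these terms $\cG|_{\P^1_U-Y}$ is trivial if and only if $[g]$ lies in the image of the restriction map $\bG(\P^1_U-Y-Z)\to\mathrm{Gr}_{\bG,Z}(U)$. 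Since $\bG(\P^1_U-Y-Z)$ strictly contains $\bG(\P^1_U-Z)$ in general, this is genuinely weaker than demanding that $\cG$ be trivial on all of $\P^1_U$ --- which, as it must, can fail.

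The heart of the matter is then to show that $[g]$ comes from $\bG(\P^1_U-Y-Z)$, i.e.\ to move the ``pole'' of $g$ from $Z$ out to $Y$. Over the formal neighbourhood of $Y$ the isotropic group $\bG$ contains a proper parabolic $\bP$ whose unipotent radical $\bU$ is, as a scheme, an iterated extension of additive groups, hence affine space; so along $Y$ one has sections of $\bG$ with controlled ``poles'' of arbitrarily high order. Combining this with $\mathrm{Pic}\,U=0$ --- so that on $\P^1_U$ relatively effective Cartier divisors of equal relative degree are linearly equivalent and a divisor supported on $Z$ can be moved onto a multiple of $Y$ --- one reduces $[g]$ to the image of a section regular away from $Y$. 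Over a closed point $u$ this reduction is a form of strong approximation for the simply connected isotropic group $\bG$ (Prasad's theorem, in all characteristics) relative to a place of $k(u)(t)$ supported on $Y_u$, where $\bG$ acquires isotropy; the resulting element of $\bG(\P^1_{k(u)}-Y_u-Z_u)$ realizes the fibre of $[g]$. The passage from closed points to the semi-local base $U$ is effected by the geometric-presentation and patching results of \cite{PSV} and \cite{Pa2}, for which the hypothesis that $Y_u$ carry a $k(u)$-rational point whenever $\bG_u$ is isotropic furnishes, at the relevant closed points, the rational ``place at infinity'' on $\P^1_{k(u)}$ needed to avoid a divisibility obstruction to moving the support of the gluing divisor onto $Y$.

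I expect this last step --- performing the transport of $[g]$ uniformly over $U$ rather than fibre by fibre, and arranging that the section produced over $\P^1_U-Y-Z$ realizes $[g]$ on the nose rather than merely approximately --- to be the main obstacle. This is where the affine-Grassmannian formalism and the imported results of \cite{PSV} and \cite{Pa2} must do the real work, with the isotropy of $\bG_Y$ and the fibrewise rational points on $Y$ entering as the precise inputs those results require.
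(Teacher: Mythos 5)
Your high-level picture — encode $\cG$ by gluing data along a punctured neighbourhood of one of the two finite subschemes and then ``move the pole'' from $Z$ to $Y$ using isotropy over $Y$ — is the same intuition the paper records in Section~\ref{sect:Grassm}, and the reformulation ``$\cG|_{\P^1_U-Y}$ is trivial iff $[g]$ lifts to $\bG(\P^1_U-Y-Z)$'' is in spirit exactly what the paper proves. But three concrete points in the execution do not go through as you have stated them, and they are precisely the places where the paper makes different choices.

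First, you glue along $Z$, whereas the paper glues along $Y$. To get a point of $\mathrm{Gr}_{\bG,Z}(U)$ from $(\cG,\rho)$ you need $\cG$ to be trivial, not just on $\P^1_U-Z$, but also on the formal (or henselian) neighbourhood of $Z$; this is not among the hypotheses, and it is not automatic. The paper sidesteps this entirely: since $Y\subset\P^1_U-Z$, the restriction of $\cG$ to any affine neighbourhood of $Y$ avoiding $Z$ is trivial, hence so is its restriction to $Y^h$ (Proposition~\ref{presentation}). This is the first reason the henselization at $Y$, not at $Z$, is the natural ambient for the gluing.

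Second, your closed-fibre step invokes strong approximation ``\`a la Prasad.'' Prasad's theorem is a statement over global fields — number fields and function fields of curves over \emph{finite} fields. Here the residue fields $k(u)$ are infinite, so $k(u)(t)$ is not global and Prasad does not apply. The tool that actually closes this step is much more elementary and exact: Gille's Corollary~3.10(a) of~\cite{GilleTorseurs} (to show $\cG|_{\P^1_\bu-Y_\bu}$ is trivial, property~(i) in the paper's outline, using the rational point of $Y_u$ when $\bG_u$ is isotropic), together with Gille's Iwasawa-type decomposition $\bG(L)=\bE(L)\,\bG(\cO)$ for a henselian discrete valuation ring $\cO$ with fraction field $L$ and $\bG$ isotropic and simply-connected (Fait~4.3 and Lemma~4.5 of~\cite{Gille:BourbakiTalk}, used in Lemma~\ref{nastia}).

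Third, and this is the main missing idea: the step you flag as ``the main obstacle'' — realizing the class over the semi-local base rather than fibre by fibre — is not handled by the geometric presentation results of \cite{PSV} or \cite{Pa2} you cite; those are used earlier, to reduce Theorem~\ref{MainThm1} to Theorems~\ref{th:psv} and~\ref{MainThm2}. What actually does the work is a lifting argument specific to the elementary subgroup: one fixes opposite parabolics $\bP^\pm\subset\bG_Y$ with unipotent radicals $\bU^\pm$; the functor $\bE$ they generate has the property that $\bE(T)\to\bE(S)$ is surjective for any closed $S$ in an affine $Y$-scheme $T$, because $\bU^\pm$ are vector bundles (Lemma~\ref{lm:surjectivity}). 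Combined with $\bG_\bu(\dot Y_\bu^h)=\bE(\dot Y_\bu^h)\,\bG_\bu(Y_\bu^h)$ from Gille, this produces $\alpha\in\bG(\dot Y^h)$ lifting the required class, and then Proposition~\ref{pr:trivclsdfbr} (which is~\cite[Theorem~7.6]{PSV}) finishes. Without the passage to $\bE$ and the observation that its restriction maps are surjective, the ``uniform over $U$'' step remains a genuine gap in your proposal.
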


The proof of this result was inspired by the theory of affine Grassmannians (see~\cite{FedorovExotic} for a proof using affine Grassmannians explicitly).

\begin{remarks*}
1. Assume that for every closed point $u\in U$ the algebraic group $\bG_u$ is anisotropic. Then we can take $Y=\emptyset$.

2. It is not necessary to assume that $Y\cap Z=\emptyset$. Indeed, let $Y$ satisfy the conditions of the theorem except that it may intersect $Z$. Since $U$ is semi-local, there is a projective transformation $\theta:\P^1_U\to\P^1_U$ such that $\theta(Y)\cap Y=\theta(Y)\cap Z=\emptyset$. By the above theorem the restriction of $\cG$ to $\P^1_U-\theta(Y)$ is trivial. Now we can apply the theorem again with $Z=\theta(Y)$ to show that the restriction of $\cG$ to $\P^1_U-Y$ is trivial.

3. In the situation of Theorem~\ref{MainThm2}, let $\bG$ be isotropic. Then it follows from the theorem that one can take $Y=\{\infty\}\times U\subset\P^1_U$, that is, the restriction of $\cG$ to $\A^1_U$ is trivial. In fact, this is a partial case of~\cite[Thm.~1.3]{PSV}. On the other hand, if~$\bG$ is anisotropic, this restriction is not in general trivial. For an example see~\cite{FedorovExotic}.
\end{remarks*}

\subsection{Organization of the paper}
In Section~\ref{sect:redtopsv}, we reduce Theorem~\ref{MainThm1} to Theorem~\ref{th:psv}. This reduction is based on~\cite{Pa2}, \cite[Thm.~1.2]{PSV}, and a theorem of D.~Popescu~\cite{P,Sw,SpivakovskyPopescu}. In Section~\ref{sect:reducing}, we reduce Theorem~\ref{th:psv} to Theorem~\ref{MainThm2}.

In Section~\ref{sect:proof2} we prove Theorem~\ref{MainThm2}. The main idea is to modify the principal bundle $\cG$ in a neighborhood of $Y$ so that $\cG$ becomes trivial. We use the technique of Henselization. One can give an essentially equivalent proof based on formal loops, see~\cite[Sect.~6.2]{FedorovExotic}.

In Section~\ref{sect:application} we give an application of Theorem~\ref{MainThm1}.

\section{Reducing Theorem~\ref{MainThm1} to Theorem~\ref{th:psv}}\label{sect:redtopsv}
In what follows ``$\bG$-bundle'' always means ``principal $\bG$-bundle''.  Now we assume that Theorem~\ref{th:psv} holds. We start with the following particular case of Theorem~\ref{MainThm1}.

\begin{proposition}\label{pr:geometric}
Let $R$ be the semi-local ring of finitely many closed points on an irreducible smooth affine variety over an infinite field $k$ and set $U=\spec R$. Let $\bG$ be a simple, simply-connected group scheme over $U$. Let $\cE$ be a principal $\bG$-bundle over $U$, trivial at the generic point of $U$. Then $\cE$ is trivial.
\end{proposition}

\begin{proof}
Under the hypothesis of the proposition, a particular case of~\cite[Thm.~1.2]{PSV} reads as follows: there exist\\
\stepzero\noindstep a principal $\bG$-bundle $\cE_t$ over $\A^1_U$;\\
\noindstep a monic polynomial $h(t)\in R[t]$.\\
Moreover, these data satisfy the following conditions:\\
(1) the restriction of $\cE_t$ to $(\A^1_U)_h$ is a trivial principal $\bG$-bundle;\\
(2) there is a section $s:U\to\A^1_U$ such that $s^*\cE_t=\cE$.

Now it follows from Theorem~\ref{th:psv} that $\cE$ is trivial.
\end{proof}

\begin{proposition}\label{pr:reductivegeometric}
Let $R$ be the semi-local ring of finitely many closed points on an irreducible smooth affine variety over an infinite field $k$ and set $U=\spec R$. Let $\bG$ be a reductive group scheme over $U$.
Let $\cE$ be a principal $\bG$-bundle over $U$ trivial at the generic point of $U$. Then $\cE$ is trivial.
\end{proposition}

\begin{proof}
The following is proved in~\cite{Pa2}:
\begin{itemize}
\item Denote by $\bG_{der}$ the derived group scheme of $\bG$. If the Grothendieck--Serre conjecture holds for any inner form of $\bG_{der}$, then it holds for $\bG$. (Recall that an inner forms of a group scheme $\bH$ is a group scheme isomorphic to $\Aut(\cH)$, where $\cH$ is an $\bH$-bundle.)
\item If the Grothendieck--Serre conjecture holds for any inner form of the simply-connected cover of a semi-simple $U$-group scheme $\bH$, then it holds for $\bH$.
\end{itemize}

Thus, we may assume that $\bG$ is semi-simple and simply-connected. By~\cite[Exp.~XXIV, Prop.~5.10]{SGA3-3} (which is valid for simply-connected group schemes as well, see the beginning of~\cite[Exp.~XXIV, Sect.~5]{SGA3-3}) there is a sequence $U_1,\ldots,U_r$ of finite \'etale $U$-schemes, and for each $i=1,\ldots,r$ a simple simply-connected $U_i$-group scheme $\bG_i$ such that
\[
    \bG\cong\prod_{i=1}^r\mathrm{R}_{U_i/U}(\bG_i),
\]
where $\mathrm{R}_{U_i/U}$ is the Weil restriction functor. Now the Faddeev--Shapiro Lemma (see~\cite[Exp. XXIV, Prop.~8.4]{SGA3-3}) shows that the Grothendieck--Serre conjecture for $\bG$ holds, if for each $i$ the conjecture holds for $\bG_i$. For more details, see~\cite[Thm.~11.1]{PSV}. Thus, we may assume that $\bG$ is simple and simply-connected. Now the proposition is reduced to Proposition~\ref{pr:geometric}.
\end{proof}
\begin{remark}
Even if we start with a local scheme $U$, the schemes $U_i$ are only semi-local in general. This is why we have to work with semi-local schemes from the beginning.
\end{remark}

\begin{proof}[Proof of Theorem~\ref{MainThm1} assuming Theorem~\ref{th:psv}]
Let us prove a general statement first. Let $k'$ be an infinite field, $X$ be a $k'$-smooth irreducible affine variety, $\bH$ be a reductive group scheme over $X$. Denote by $k'[X]$ the ring of regular functions on $X$ and by $k'(X)$ the field of rational functions on $X$. Let $\cH$ be a principal $\bH$-bundle over $X$ trivial over $k'(X)$. Let $\mathfrak p_1,\dots,\mathfrak p_n$ be prime ideals in $k'[X]$, and let $\cO_{\mathfrak p_1,\dots,\mathfrak p_n}$ be the corresponding semi-local ring.
\begin{lemma}\label{lm:primemax}
The principal $\bH$-bundle $\cH$ is trivial over $\cO_{\mathfrak p_1,\dots,\mathfrak p_n}$.
\end{lemma}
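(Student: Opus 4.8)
The plan is to reduce the statement about the semi-local ring $\cO_{\mathfrak p_1,\dots,\mathfrak p_n}$ to the case already handled in Proposition~\ref{pr:reductivegeometric}, which applies to the semi-local ring of finitely many \emph{closed} points on a smooth affine variety. The issue is that the $\mathfrak p_i$ need not be maximal ideals, so $\cO_{\mathfrak p_1,\dots,\mathfrak p_n}$ is not literally of the form treated there. First I would observe that a principal $\bH$-bundle over a ring $A$ is trivial if and only if it is trivial over every localization $A_{\mathfrak m}$ at a maximal ideal $\mathfrak m$ of $A$; since triviality of a $\bG$-bundle over $A$ is the existence of a global section of an affine scheme of finite presentation, it suffices to check this Zariski-locally, and the maximal ideals of $A = \cO_{\mathfrak p_1,\dots,\mathfrak p_n}$ correspond exactly to (a subset of) the $\mathfrak p_i$ that are maximal among the $\mathfrak p_j$. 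Thus, after discarding redundant primes, I may assume we are localizing at a single prime $\mathfrak p = \mathfrak p_i$, i.e.\ it suffices to prove that $\cH$ is trivial over $\cO_{X,\mathfrak p}$ for each $i$.

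Next I would pass from the prime $\mathfrak p$ on $X$ to a closed point on a smaller variety. Let $Z = \overline{\{\mathfrak p\}} \subset X$ be the irreducible closed subvariety with generic point $\mathfrak p$, and pick a closed point $z \in Z$. Since $X$ is affine and $k'$ is infinite (so there are plenty of hypersurface sections), I can choose a locally closed affine smooth irreducible subvariety $X' \subset X$ through $z$ whose generic point is $\mathfrak p$ — for instance, intersect $X$ with general hyperplanes through $z$ until the dimension drops to $\dim Z$, after which $X'$ meets $Z$ in a component and, shrinking, $X'$ is smooth with $\cO_{X',\mathfrak p} \cong \cO_{X,\mathfrak p}$. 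Concretely: the local ring $\cO_{X,\mathfrak p}$ is itself a regular local ring, and it is the local ring at a closed point of a suitable smooth affine $k'$-variety. (Alternatively one invokes that $\cO_{X,\mathfrak p}$ is a direct limit of coordinate rings of smooth affine $k'$-varieties, but it is cleaner to realize it on the nose as a local ring at a closed point.) On $X'$ we get the restricted reductive group scheme $\bH' = \bH|_{X'}$ and the restricted bundle $\cH' = \cH|_{X'}$, and $\cH'$ is trivial over the generic point of $X'$ because that generic point is still $\mathfrak p$ and $\cH$ was trivial over $k'(X)$ hence over $k'(X) \supset$ the residue field at $\mathfrak p$; more precisely triviality over $k'(X)$ restricts to triviality over the fraction field $k'(X') = \kappa(\mathfrak p)$.

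Now the local ring $\cO_{X',z}$ — or the semi-local ring of finitely many closed points of $X'$ if one wants to keep all the $\mathfrak p_i$ in play simultaneously — is exactly of the type covered by Proposition~\ref{pr:reductivegeometric}: it is the (semi-)local ring of closed point(s) on a smooth irreducible affine variety over the infinite field $k'$, and $\bH'$ is a reductive group scheme over it. By that proposition, $\cH'$ restricted to $\cO_{X',z}$ is trivial. But $\cO_{X',z}$ dominates $\cO_{X',\mathfrak p} = \cO_{X,\mathfrak p}$, so $\cH|_{\cO_{X,\mathfrak p}}$ is a base change of a trivial bundle, hence trivial. Since this holds for every maximal-among-the-$\mathfrak p_i$ prime, the first paragraph gives triviality of $\cH$ over $\cO_{\mathfrak p_1,\dots,\mathfrak p_n}$.

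The main obstacle is the second step: producing the smooth affine subvariety $X'$ (equivalently, exhibiting $\cO_{X,\mathfrak p}$ as the local ring at a closed point of a smooth affine $k'$-variety) together with a closed point $z$ such that the restricted bundle is still generically trivial. One must take care that the general hyperplane sections through $z$ keep $X'$ smooth and irreducible and do not accidentally force $\mathfrak p \notin X'$; this is where infinitude of $k'$ is used (Bertini-type genericity), and one should also handle the semi-local bookkeeping so that all the points $z_1,\dots,z_n$ (one for each $\mathfrak p_i$) can be arranged on a single such $X'$, or else simply treat one prime at a time as in the first paragraph. Everything else is formal: restriction of reductive group schemes and of principal bundles, and the local-to-global principle for triviality.
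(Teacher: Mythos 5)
Your proof contains a genuine gap in the construction of the auxiliary variety $X'$, and the construction is in fact unnecessary: the paper's argument is much shorter.

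The gap: you want a smooth locally closed affine $X'\subset X$ of dimension $\dim Z$, passing through $z$, with generic point $\mathfrak p$, and with $\cO_{X',\mathfrak p}\cong\cO_{X,\mathfrak p}$. These requirements are incompatible. If $\mathfrak p$ is the generic point of $X'$, then $\cO_{X',\mathfrak p}$ is the function field $k'(X')$, a field, whereas $\cO_{X,\mathfrak p}$ has Krull dimension $\operatorname{codim}_X Z>0$ (assuming $\mathfrak p$ is not maximal), so these two rings cannot be isomorphic. Moreover the passage from $X$ to a general linear section through $z$ of dimension $\dim Z$ does not in general contain $\mathfrak p$: a general such $X'$ meets $Z$ in dimension zero, so unless $X'=Z$ locally (which fails when $Z$ is singular) the generic point $\mathfrak p$ is not on $X'$. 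Finally, the assertion that ``$\cO_{X',z}$ dominates $\cO_{X,\mathfrak p}$'' cannot hold as stated: the residue field of $\cO_{X',z}$ is finite over $k'$, while the residue field of $\cO_{X,\mathfrak p}$ is $\kappa(\mathfrak p)=k'(Z)$, which is transcendental over $k'$, so no local homomorphism $\cO_{X,\mathfrak p}\to\cO_{X',z}$ exists.

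What you want is far simpler and does not require leaving $X$. For each $\mathfrak p_i$ choose a maximal ideal $\mathfrak m_i\subset k'[X]$ containing $\mathfrak p_i$ (i.e. a closed point of $\overline{\{\mathfrak p_i\}}$). Since $\mathfrak p_i\subset\mathfrak m_i$, every element of $k'[X]\setminus\mathfrak m_i$ lies in $k'[X]\setminus\mathfrak p_i$, whence the inclusion of $k'$-algebras
\[
\cO_{\mathfrak m_1,\dots,\mathfrak m_n}\subset\cO_{\mathfrak p_1,\dots,\mathfrak p_n}\subset k'(X).
\]
Now $\cO_{\mathfrak m_1,\dots,\mathfrak m_n}$ is a semi-local ring of finitely many closed points on the smooth irreducible affine variety $X$ over the infinite field $k'$, so Proposition~\ref{pr:reductivegeometric} applies directly and $\cH$ is trivial over $\cO_{\mathfrak m_1,\dots,\mathfrak m_n}$. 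Base-changing along the inclusion above gives triviality over $\cO_{\mathfrak p_1,\dots,\mathfrak p_n}$. The Zariski-local reduction in your first paragraph is correct but unnecessary once one notices this containment of rings.
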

\begin{proof}
For each $i=1,2,\ldots,n$ choose a maximal ideal $\mathfrak m_i\subset k'[X]$ containing $\mathfrak p_i$. One has inclusions of $k'$-algebras
\[
\cO_{\mathfrak m_1,\dots,\mathfrak m_n}\subset\cO_{\mathfrak p_1,\dots,\mathfrak p_n}\subset k'(X).
\]
By Proposition~\ref{pr:reductivegeometric} the principal $\bH$-bundle $\cH$ is trivial over $\cO_{\mathfrak m_1,\dots,\mathfrak m_n}$. Thus it is trivial over $\cO_{\mathfrak p_1,\dots,\mathfrak p_n}$.
\end{proof}

Let us return to our situation. Let $\fm_1,\ldots,\fm_n$ be all the maximal ideals of $R$. Let $\cE$ be a $\bG$-bundle over $R$ trivial over the fraction field of $R$. Clearly, there is a non-zero $f\in R$ such that $\cE$ is trivial over $R_f$. Let $k'$ be the algebraic closure of the prime field of $R$ in $k$. Note that $k'$ is perfect. It follows from Popescu's theorem (\cite{P,Sw,SpivakovskyPopescu}) that~$R$ is a filtered inductive limit of smooth $k'$-algebras $R_\alpha$. Modifying the inductive system $R_\alpha$ if necessary, we can assume that each $R_\alpha$ is integral.
There exist an index $\alpha$, a reductive group scheme $\bG_{\alpha}$ over $R_{\alpha}$, a principal $\bG_{\alpha}$-bundle $\cE_{\alpha}$ over $R_{\alpha}$, and an element $f_{\alpha }\in R_{\alpha}$ such that $\bG=\bG_{\alpha}\times_{\spec R_{\alpha}}\spec R$, $\cE$ is isomorphic to $\cE_{\alpha}\times_{\spec R_{\alpha}}\spec R$ as principal $\bG$-bundle, $f$ is the image of $f_{\alpha}$ under the homomorphism $\phi_{\alpha}: R_{\alpha}\to R$, and $\cE_{\alpha}$ is trivial over $(R_{\alpha})_{f_{\alpha}}$.

If the field $k'$ is infinite, then for each maximal ideal $\mathfrak m_i$ in~$R$ ($i=1,\dots, n$) set $\mathfrak p_i=\phi_{\alpha}^{-1}(\mathfrak m_i)$. The homomorphism $\phi_\alpha$ induces a homomorphism of semi-local rings $(R_{\alpha})_{\mathfrak p_1,\dots,\mathfrak p_n}\to R$. By Lemma~\ref{lm:primemax} the principal $\bG_{\alpha}$-bundle $\cE_{\alpha}$ is trivial over
$(R_{\alpha})_{\mathfrak p_1,\dots,\mathfrak p_n}$. Whence the $\bG$-bundle $\cE$ is trivial over $R$.

If the field $k'$ is finite, then $k$ contains an element $t$ transcendental over $k'$. Thus $R$ contains the subfield $k'(t)$ of rational functions in the variable $t$. So, if $R'_{\alpha}:= R_{\alpha}\otimes_{k'} k'(t)$, then $\phi_{\alpha}$ can be decomposed as follows
\[
  R_{\alpha}\to R_{\alpha}\otimes_{k'}k'(t)=R'_{\alpha}\xrightarrow{\psi_{\alpha}}R.
\]
Let $\bG'_{\alpha}=\bG_{\alpha}\times_{\spec R_{\alpha}}\spec R'_{\alpha}$, $\cE'_{\alpha}=\cE_{\alpha}\times_{\spec R_{\alpha}}\spec R'_{\alpha}$, $f'_{\alpha}=f_{\alpha}\otimes 1\in R'_{\alpha}$, then the $\bG'_{\alpha}$-bundle $\cE'_{\alpha}$ is trivial over $(R'_{\alpha})_{f'_{\alpha}}$.

Let $\mathfrak q_i=\psi_{\alpha}^{-1}(\mathfrak m_i)$ for $i=1,\dots, n$. The ring $R'_{\alpha}$ is a $k'(t)$-smooth algebra over the infinite field $k'(t)$, and the $\bG'_{\alpha}$-bundle $\cE'_{\alpha}$ is trivial over $(R'_{\alpha})_{f'_{\alpha}}$. By Lemma~\ref{lm:primemax} the $\bG'_{\alpha}$-bundle $\cE'_{\alpha}$ is trivial over $(R'_{\alpha})_{\mathfrak q_{1},\dots,\mathfrak q_{n}}$. The homomorphism $\psi_{\alpha}$ can be factored as
\[
  R'_{\alpha}\to (R'_{\alpha})_{\mathfrak q_{1},\dots,\mathfrak q_{n}}\to R.
\]
Thus the $\bG$-bundle $\cE$ is trivial over $R$.
\end{proof}
\begin{remark*}
    If $k$ is perfect, we can use it instead of $k'$, and the above proof simplifies.
\end{remark*}

\section{Reducing Theorem~\ref{th:psv} to Theorem~\ref{MainThm2}}\label{sect:reducing}
Now we assume that Theorem~\ref{MainThm2} is true. Let $U$ and $\bG$ be as in Theorem~\ref{th:psv}. Let $u_1,\ldots,u_n$ be all the closed points of $U$. Let $k(u_i)$ be the residue field of $u_i$. Consider the reduced closed subscheme $\bu$ of $U$, whose points are $u_1$, \ldots, $u_n$. Thus
\[
 \bu\cong\coprod_i\spec k(u_i).
\]
Set $\bG_\bu=\bG\times_U\bu$. By $\bG_{u_i}$ we denote the fiber of $\bG$ over $u_i$; it is a simple simply-connected algebraic group over $k(u_i)$. Let $\bu'\subset\bu$ be the subscheme of all closed points $u_i$ such that the group $\bG_{u_i}$ is isotropic. Set $\bu''=\bu-\bu'$. It is possible that $\bu'$ or $\bu''$ is empty.

\begin{proposition}\label{pr:etale}
There is a closed subscheme $Y\subset\P^1_U$ such that $Y$ is \'etale over $U$, $\bG_Y=\bG\times_UY$ is isotropic, and for all $u_i\in\bu'$ there is a $k(u_i)$-rational point $y_i\in Y$ lying over $u_i$.
\end{proposition}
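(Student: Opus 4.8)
The plan is to construct $Y$ component by component, handling the isotropic closed points first and then filling out the rest so that $\bG_Y$ becomes isotropic everywhere. The key point is that isotropy of a simple simply-connected group scheme over a semi-local base is governed by the fibers, so we need the restriction of $\bG$ to $Y$ to be isotropic over each point of $Y$, and we will arrange this by choosing $Y$ to meet only certain carefully chosen closed fibers.

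First I would treat the points of $\bu'$. For each $u_i\in\bu'$ the group $\bG_{u_i}$ is isotropic over $k(u_i)$, hence by \cite[Exp.~XXVI, Cor.~6.14]{SGA3-3} it contains a copy of $\mathbb G_{m,k(u_i)}$; in particular $\bG_{u_i}$ contains a proper parabolic subgroup. Since a parabolic subgroup scheme of a smooth affine group scheme over a henselian (in particular, local) ring can be lifted from the closed fiber --- parabolics are smooth and the scheme of parabolics of a given type is smooth and proper over the base --- the group scheme $\bG$ restricted to $\spec\cO_{U,u_i}$ is isotropic. Thus there is an open neighborhood $V_i$ of $u_i$ in $U$ over which $\bG$ is isotropic. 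I would pick, inside $\P^1_{V_i}$, a section $V_i\to\P^1_{V_i}$ through a chosen $k(u_i)$-rational point $y_i$ of $\P^1_{u_i}$; its image is a closed subscheme isomorphic to $V_i$, hence smooth (in fact étale, being a section) over $V_i$, and $\bG$ restricted to it is isotropic. After shrinking, the closures of these sections can be taken disjoint (here one uses that $U$ is semi-local, so finitely many points are involved, and a general choice of the $y_i$ and of the sections separates them).

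Next I would deal with the remaining closed points $u_j\in\bu''$, where $\bG_{u_j}$ is anisotropic. Here the idea is standard in this circle of arguments: pass to a suitable finite separable field extension. A simple simply-connected group over a field becomes isotropic after a finite separable extension (it even becomes split), so choose a finite separable extension $\ell_j/k(u_j)$ over which $\bG_{u_j}$ is isotropic, and realize $\ell_j$ as the residue field of a closed point of $\P^1_{u_j}$, i.e. as $k(u_j)[t]/(g_j)$ for some monic irreducible separable $g_j$. Spreading out, one gets an étale multisection $W_j\subset\P^1_{V_j}$ over a neighborhood $V_j$ of $u_j$ such that $\bG$ restricted to $W_j$ is isotropic over the point lying over $u_j$; shrinking $V_j$ and using that isotropy is an open condition on the base (the variety of parabolics of fixed type is proper and smooth, so its having a section is open), one arranges $\bG_{W_j}$ isotropic. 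Again, genericity of the choices and the semi-locality of $U$ let one take all the $W_j$ and all the previously constructed sections pairwise disjoint. Finally set $Y$ to be the disjoint union of these pieces; it is a closed subscheme of $\P^1_U$, étale over $U$, with $\bG_Y$ isotropic, and it contains the prescribed rational points $y_i$ over $u_i\in\bu'$.

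The main obstacle is the disjointness bookkeeping together with ensuring that the finitely many local pieces $V_i\to\P^1_{V_i}$ and $W_j\subset\P^1_{V_j}$ glue to an honest closed subscheme that is étale over \emph{all} of $U$, not just over the union of the neighborhoods: away from the closed points the fibers of $Y\to U$ should be empty or étale, which is automatic once $Y$ is finite and the relevant discriminants are units, but one must check that after shrinking the $V_i,V_j$ and choosing the defining polynomials generically, the closures in $\P^1_U$ do not acquire bad intersections or ramification over the non-closed points. Since $U$ is semi-local this is a finite problem, and a dimension count shows a generic choice works; this is where the bulk of the (routine) verification lies.
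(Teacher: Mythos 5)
Your proposal takes a genuinely different route from the paper, and it has a gap that I don't see how to repair without essentially switching to the paper's method.

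The paper's proof constructs each $Y_i$ directly \emph{inside} the $U$-scheme $\cP_i$ of parabolic subgroup schemes of $\bG$ of a fixed type, using a Bertini-type argument (embed $\cP_i\subset\P^N_U$, intersect with a suitable family of quadrics through $p_i$, as in \cite[Lemma~7.2]{OP2}) to obtain $Y_i$ \emph{finite \'etale over all of $U$} in one stroke. Because $Y_i\subset\cP_i$, the group scheme $\bG_{Y_i}$ carries a tautological parabolic subgroup scheme, so isotropy of $\bG_{Y_i}$ is automatic. No gluing of local pieces, no passage to $\bu''$, and no closure is ever taken.

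Your construction, by contrast, builds pieces over open neighborhoods $V_i\subsetneq U$ and then takes closures in $\P^1_U$. For $u_i\in\bu'$ you take the image of a section $V_i\to\P^1_{V_i}$. Here is the problem. The closure $\overline{Y_i}$ of that section in $\P^1_U$ is an irreducible closed subscheme of $\P^1_U$; if it were finite \'etale over $U$, it would be finite and birational over the normal integral scheme $U$, hence the map $\overline{Y_i}\to U$ would be an isomorphism. Then $\bG_{\overline{Y_i}}\cong\bG$, and a connected component of your $Y$ would be a copy of $U$ itself, over which $\bG$ need not be isotropic: indeed, if $\bu''\neq\emptyset$ then $\bG$ is anisotropic at the points of $\bu''$ and has no proper parabolic over $U$. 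So this component of $Y$ violates the conclusion $\bG_Y$ isotropic, regardless of any ``generic choice.'' (And if $\overline{Y_i}\to U$ were \emph{not} an isomorphism, then $\overline{Y_i}$ is not \'etale over $U$.) This is exactly what the paper sidesteps by never using sections: $Y_i\to U$ is allowed to be a nontrivial finite \'etale cover, and the residue field extensions it introduces are precisely what makes $\bG$ isotropic over $Y_i$.

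Two smaller issues: (a) your inference that $\bG|_{\spec\cO_{U,u_i}}$ is isotropic from isotropy of the fiber at $u_i$ uses lifting of parabolics over a henselian local ring, but $\cO_{U,u_i}$ is not henselian, so this step needs an approximation argument that is not supplied; (b) the claim that ``the closures ... can be taken disjoint'' after shrinking is not a finite problem over the generic point of $U$, and ``a dimension count shows a generic choice works'' is not substantiated -- the actual hard work of producing a global finite \'etale $Y$ is exactly the Bertini argument of the paper.
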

\begin{proof}
If $\bu'$ is empty, we just take $Y=\emptyset$.

Otherwise, for every $u_i$ in $\bu'$ choose a proper parabolic subgroup $\bP_{u_i}$ in $\bG_{u_i}$. Let $\cP_i$ be the $U$-scheme of parabolic subgroup schemes of $\bG$ of the same type as $\bP_{u_i}$. It is a smooth projective $U$-scheme (see~\cite[Cor.~3.5, Exp.~XXVI]{SGA3-3}). The subgroup $\bP_{u_i}$ in $\bG_{u_i}$ is a $k(u_i)$-rational point~$p_i$ in the fibre of $\cP_i$ over the point $u_i$.

We claim that there is a closed subscheme $Y_i$ of $\cP_i$ such that $Y_i$ is \'etale over $U$ and $p_i\in Y_i$. Indeed, let $r$ be the dimension of $\cP_i$ over $U$ and take an embedding of $\cP_i$ into the projective space $\P^N_U=\Proj(R[x_0,\ldots,x_N])$. Let $\mathfrak m_j$ be the maximal ideal in $R$ corresponding to $u_j\in\bu$. Since $k$ is infinite, by a variant of Bertini's theorem (see~\cite[Exp.~XI, Thm.~2.1]{SGA4-3}), for each $j$ there is a sequence of homogeneous quadratic polynomials $H_1^j,\ldots,H_r^j\in(R/\mathfrak m_j)[x_0,\ldots,x_N]$ such that the subscheme $T_j$ of $\P^N_{k(u_j)}$ given by the equations $H_1^j=\ldots=H_r^j=0$ intersects the fiber of $\cP_i$ over $u_j$ transversally. Moreover, we may assume that $p_i\in T_i$. By the Chinese Remainder Theorem for each $m\in\{1,\ldots,r\}$ there is a common lift of polynomials $H_m^j$ to a quadratic polynomial $H_m\in R[x_0,\ldots,x_N]$. Let $T$ be the scheme given by $H_1=\ldots=H_r=0$. Then $Y_i:=T\cap\cP_i$ is the required subscheme. Indeed, we only need to check that $Y_i$ is \'etale over $U$. However, for every closed point of $U$ the fiber of $Y_i$ over this point is \'etale by construction. Hence, it is enough to check that $Y_i$ is flat over $U$. The flatness follows immediately from~\cite[Thm.~23.1]{MatsumuraCommRingTh}.

Now consider $Y_i$ just as a $U$-scheme and set $Y=\coprod_{u_i\in\bu'}Y_i$. Next, $\bG_{Y_i}$ is isotropic by the choice of $Y_i$. Thus $\bG_Y$ is isotropic as well.
Since the field $k$ is infinite and $Y$ is finite \'{e}tale over $U$, we can choose a closed $U$-embedding of $Y$ in $\A^1_U$. We will identify $Y$ with the image of this closed embedding. Since $Y$ is finite over $U$, it is closed in $\P_U^1$.
\end{proof}

\begin{proof}[Proof of Theorem~\ref{th:psv} assuming Theorem~\ref{MainThm2}]
Set $Z:=\{h=0\}\cup s(U)\subset\A^1_U$. Note that $\{h=0\}$ is closed in $\P^1_U$ and finite over $U$ because $h$ is monic. Further, $s(U)$ is also closed in $\P^1_U$ and finite over $U$ because it is a zero set of a degree one monic polynomial.
Thus $Z\subset\P^1_U$ is closed and finite over $U$.

Let $Y$ be as in Proposition~\ref{pr:etale}. Since $U$ is semi-local, there exists a projective transformation $\theta:\P^1_U\to\P^1_U$ such that $Z\cap\theta(Y)=\emptyset$. Thus, replacing $Y$ by $\theta(Y)$ we may assume that $Z\cap Y=\emptyset$.

Since the principal $\bG$-bundle $\cE_t$ is trivial over $(\A^1_U)_h$, and $\bG$-bundles can be glued in the Zariski topology, there exists a principal $\bG$-bundle $\cG$ over $\P^1_U$ such that\\
\indent (i) its restriction to $\A^1_U$ coincides with $\cE_t$;\\
\indent (ii) its restriction to $\P^1_U-Z$ is trivial.

Applying Theorem~\ref{MainThm2} with the above choice of $Y$ and $Z$, we see that the restriction of $\cG$ to $\P^1_U-Y$ is a trivial $\bG$-bundle. Since $s(U)$ is in $(\P^1_U-Y)\cap\A^1_U$, and $\cG|_{\A^1_U}$ coincides with $\cE_t$, we conclude that $s^*\cE_t$ is a trivial principal $\bG$-bundle over $U$.
\end{proof}

\section{Proof of Theorem~\ref{MainThm2}}\label{sect:proof2}
We will be using notation from Theorem~\ref{MainThm2}. Let $\bu$, $\bu'$, and $\bu''$ be as in Section~\ref{sect:reducing}. For $u\in\bu$ set $\bG_u=\bG|_u$.

\begin{proposition}\label{pr:trivclsdfbr}
Let $\cE$ be a $\bG$-bundle over $\P^1_U$ such that $\cE|_{\P^1_u}$ is a trivial $\bG_u$-bundle for all $u\in\bu$. Assume that there exists a closed subscheme $T$ of $\P^1_U$ finite over $U$ such that the restriction of $\cE$ to $\P^1_U-T$ is trivial. Then $\cE$ is trivial.
\end{proposition}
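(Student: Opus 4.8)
The plan is to sidestep a direct study of $\cE$ over $\P^1_U$ and instead to compare $\cE$ with the pullback $\pi^*s^*\cE$ of its restriction to a carefully chosen section $s$ of $\pi\colon\P^1_U\to U$: after rigidifying along $s$, the resulting functor of isomorphisms turns out to be an open immersion over $U$, and one reads off triviality from semi-locality.

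\emph{Reductions and choice of $s$.} If $T=\emptyset$ there is nothing to prove, so assume $T\neq\emptyset$. Since $T$ is finite, hence proper, over the semi-local base $U$, and every residue field of $R$ contains the infinite field $k$, I can choose a section $s\colon U\to\P^1_U$ of $\pi$ with $s(U)\cap T=\emptyset$: pick for each closed point $u$ a $k(u)$-point of $\P^1_u$ off the finite set $T_u$, take a section of $\pi$ passing through these points, and observe that $s(U)\cap T$ is closed in $s(U)\cong U$ and avoids all closed points, hence is empty. Then $s(U)\subset\P^1_U-T$, so $s^*\cE$ is the restriction of the trivial bundle $\cE|_{\P^1_U-T}$; thus $s^*\cE$, and with it $\pi^*s^*\cE$, is a trivial $\bG$-bundle.

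\emph{The rigidified isomorphism functor.} Let $\cJ$ be the $U$-functor whose $S$-points are the isomorphisms of $\bG$-bundles $\pi^*s^*\cE|_{\P^1_S}\xrightarrow{\sim}\cE|_{\P^1_S}$ that restrict over $s(S)$ to the tautological identification $\pi^*s^*\cE|_{s(S)}=s^*\cE|_S=\cE|_{s(S)}$. The scheme $\underline{\Iso}(\pi^*s^*\cE,\cE)$ is affine of finite type over $\P^1_U$, and $\pi$ is flat and projective, so the Weil restriction of $\underline{\Iso}(\pi^*s^*\cE,\cE)$ along $\pi$ is representable by a $U$-scheme; $\cJ$ is the closed subscheme of it obtained by imposing the rigidification at $s$ (the preimage of the identity section of $\underline{\Aut}(s^*\cE)$). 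I would then verify three properties. (i) The map $\cJ\to U$ is a monomorphism: two $S$-points differ by an element of the kernel of the restriction map $\pi_*\underline{\Aut}(\pi^*s^*\cE)\to\underline{\Aut}(s^*\cE)$, and this map is the identity $\bG_U\to\bG_U$ (both $\pi^*s^*\cE$ and $s^*\cE$ being trivial, and using $\pi_*\mathcal O_{\P^1_U}=\mathcal O_U$ and $s^*\pi^*=\Id$), so the kernel is trivial. (ii) The map $\cJ\to U$ is smooth: the obstruction to extending an $S'$-point across a square-zero extension with ideal sheaf $I$ lies in $H^1\bigl(\P^1_{S'},\,\mathcal O_{\P^1_{S'}/S'}(-1)^{\oplus N}\otimes\pi^*I\bigr)$, with $N=\dim\bG$; here the twist by $\mathcal O(-1)=\mathcal O_{\P^1}(-s)$ records the rigidification along $s$, and this group equals $H^0\bigl(S',\,R^1\pi_*\mathcal O_{\P^1}(-1)^{\oplus N}\otimes I\bigr)=0$ because $R^1\pi_*\mathcal O_{\P^1}(-1)=0$. (iii) For every closed point $u$ of $U$ one has $\cJ_u\neq\emptyset$: indeed $\cE|_{\P^1_u}$ is trivial by hypothesis, hence isomorphic to the trivial bundle $\pi^*s^*\cE|_{\P^1_u}$, and any such isomorphism can be corrected by the $\pi$-pullback of a suitable automorphism of $s^*\cE|_u$ so as to agree with the tautological identification over $s(u)$.

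\emph{Conclusion.} A flat monomorphism locally of finite presentation is an open immersion, so (i) and (ii) show that $\cJ\to U$ is an open immersion; by (iii) its image is an open subscheme of the semi-local scheme $U$ containing every closed point, hence is all of $U$. Therefore $\cJ\to U$ is an isomorphism, in particular $\cJ(U)\neq\emptyset$: there is an isomorphism $\cE\cong\pi^*s^*\cE$, and the latter is trivial. The step I expect to require the most care is the construction of $\cJ$ --- the representability of the Weil restriction and, above all, the point that rigidifying along $s$ is precisely what forces $\cJ\to U$ to be a monomorphism. This rigidification is essential: without it, comparing $\cE$ with $\pi^*s^*\cE$ over $\P^1_U$ would be tantamount to the Grothendieck--Serre statement itself.
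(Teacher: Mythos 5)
Your argument is correct and essentially complete. It is, however, a genuinely different route from what the paper does: the paper's proof of this proposition is a one-line citation to Theorem~7.6 of \cite{PSV}, whereas you give a self-contained Horrocks-type argument by rigidifying the functor of trivializations of $\cE$ along a section $s$ disjoint from $T$ and checking monomorphism, formal smoothness, and surjectivity on closed points.

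A few remarks on the details. The choice of $s$ is fine: for each closed point $u$ the finite set $T_u$ misses some $k(u)$-rational point of $\A^1_u$ since $k(u)\supset k$ is infinite, CRT produces $f\in R$ with the prescribed residues, and $s(U)\cap T$ is a closed subset of $U$ avoiding all closed points, hence empty. For representability of $\cJ$: since $\bG$ is affine over $U$, the torsor $\cE$ is affine and of finite type over $\P^1_U$, hence quasi-projective over the noetherian base $U$ (embed $\cE$ as a closed subscheme of a vector bundle over $\P^1_U$, which is open in a projective bundle); then the functor of sections $\mathrm{Sect}_{\P^1_U/U}(\cE)$ is representable by a disjoint union of quasi-projective $U$-schemes via Grothendieck's Hom-scheme theorem for $\P^1_U\to U$ flat projective, and $\cJ$ is the closed subscheme obtained by imposing the rigidification along $s$ (the identity section of $\underline{\Aut}(s^*\cE)\cong\bG_U$ is a closed immersion since $\bG$ is separated). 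Your monomorphism step is clean: automorphisms of the trivial bundle on $\P^1_S$ are $\bG(\P^1_S)=\bG(S)$ because $\pi_*\mathcal O_{\P^1_S}=\mathcal O_S$ and $\bG$ is affine, and the restriction to $s(S)$ is the identity map $\bG(S)\to\bG(S)$. For smoothness, the one imprecision is the coefficient sheaf: the obstruction lies in $H^1\bigl(\P^1_{S'},\,\sigma'^*T_{\cE/\P^1}\otimes\mathcal O(-s(S'))\otimes\pi^*I\bigr)$, and $\sigma'^*T_{\cE/\P^1}\cong\pi^*\bigl(\Lie(\bG)|_{S'}\bigr)$ rather than a trivial bundle; but this is pulled back from $S'$, so $R^q\pi_*$ of $\mathcal O(-1)$ twisted by it still vanishes by the projection formula, and the conclusion is unaffected. (If you want your formula literally, note $\Lie(\bG)$ is locally free of rank $N$ on $U$ and hence free on the semi-local $U$, but the argument does not need this.) Nonemptiness of each $\cJ_u$ and the final ``flat monomorphism of finite presentation is an open immersion, and an open of a semi-local scheme containing all closed points is everything'' step are both correct.

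What the two approaches buy: the paper keeps the argument short by delegating to \cite{PSV}, where this statement (or a close variant) is proved; your version makes the mechanism transparent --- in particular it isolates exactly why the rigidification along a section is what turns the a priori hard problem (comparing $\cE$ with $\pi^*s^*\cE$ over $\P^1_U$) into a cohomology vanishing on $\P^1$. That conceptual point is worth having spelled out.
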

\begin{proof}
This follows from Proposition~9.6 of~\cite{PSV}.
\end{proof}
\begin{remark}
    The same proof goes through for any semi-simple $U$-group scheme $\bG$.
\end{remark}

\subsection{An outline of a proof of Theorem~\ref{MainThm2}}\label{sect:outline}
A detailed proof will be given in the present text below. Firstly, we give an outline of the proof.

Denote by $Y^h$ the Henselization of the pair $(\A_U^1,Y)$; it is a scheme over $\A_U^1$. We review some facts about Henselization of pairs in Section~\ref{sect:distinguishedLimit}. In particular, there exists a canonical closed embedding $s^h:Y\to Y^h$, and we set $\dot Y^h:=Y^h-s^h(Y)$. We have a natural Cartesian square (see Section~\ref{sect:gluing} for more details)
\begin{equation*}
\begin{CD}
\dot Y^h @>>> Y^h\\
@VVV @VVV\\
\P^1_U - Y @>>>\P^1_U.
\end{CD}
\end{equation*}
This square can be used to glue principal bundles. In particular, if $\cG'$ is a $\bG$-bundle over $\P^1_U-Y$, then by $\Gl(\cG',\phi)$ we denote the $\bG$-bundle over $\P_U^1$ obtained by gluing $\cG'$ with the trivial $\bG$-bundle $\bG\times_U Y^h$ via a $\bG$-bundle isomorphism $\phi:\bG\times_U\dot Y^h\to\cG'|_{\dot Y^h}$.

Similarly, set $Y_\bu:=Y\times_U\bu$ and denote by $Y_\bu^h$ the Henselization of the pair $(\A_\bu^1,Y_\bu)$, let
$s_\bu^h:Y_\bu\to Y_\bu^h$ be the closed embedding. Set $\dot Y_\bu^h:=Y_\bu^h-s_\bu(Y_\bu)$. Let $\cG'_\bu$ be a $\bG_\bu$-bundle over $\P^1_\bu-Y_\bu$, where $\bG_\bu:=\bG\times_U\bu$. Denote by $\Gl_\bu(\cG'_\bu,\phi_\bu)$ the $\bG_\bu$-bundle over $\P_\bu^1$ obtained by gluing $\cG'_\bu$ with the trivial bundle $\bG_\bu\times_\bu Y_\bu^h$ via a $\bG_\bu$-bundle isomorphism $\phi_\bu:\bG_\bu\times_\bu\dot Y_\bu^h\to\cG'_\bu|_{\dot Y_\bu^h}$.

We will prove in Section~\ref{sect:presentation} that the restriction of the $\bG$-bundle $\cG$ to $Y^h$ is trivial, so $\cG$ can be presented in the form $\Gl(\cG',\phi)$, where $\cG'=\cG|_{\P^1_U-Y}$. The idea is to show that

\vskip4pt
\newlength{\shortwidth}
\setlength{\shortwidth}{\textwidth}
\addtolength{\shortwidth}{-.5cm}

\noindent ($*$) \emph{\hfill \parbox{\shortwidth}{There is an element $\alpha\in\bG(\dot Y^h)$ such that the $\bG_\bu$-bundle $\Gl(\cG',\phi\circ\alpha)|_{\P^1_\bu}$ is trivial (here $\alpha$ is regarded as an automorphism of the $\bG$-bundle $\bG\times_U\dot Y^h$ given by right translation action} of $\alpha$).}

\vskip4pt

If we find $\alpha$ satisfying condition ($*$), then Proposition~\ref{pr:trivclsdfbr}, applied to $T=Y\cup Z$, shows that the $\bG$-bundle $\Gl(\cG',\phi\circ\alpha)$ is trivial over $\P^1_U$. On the other hand, its restriction to $\P^1_U-Y$ coincides with the $\bG$-bundle $\cG'=\cG|_{\P^1_U-Y}$. \emph{Thus $\cG|_{\P^1_U-Y}$ is a trivial $\bG$-bundle}.

To prove ($*$), one should show that\\
\indent (i) the bundle $\cG|_{\P^1_\bu-Y_\bu}$ is trivial;\\
\indent (ii) each element $\gamma_\bu\in\bG_\bu(\dot Y_\bu^h)$ can be written in the form
\[
 \alpha|_{\dot Y_\bu^h}\cdot\beta_\bu|_{\dot Y_\bu^h}
\]
for certain elements $\alpha\in\bG(\dot Y^h)$ and $\beta_\bu\in\bG_\bu(Y_\bu^h)$.

If we succeed in showing that (i) and (ii) above hold, then we proceed as follows. Present the $\bG$-bundle $\cG$ in the form $\Gl(\cG',\phi)$ as above. Observe that
\[
 \Gl(\cG',\phi)|_{\P^1_\bu}\cong\Gl_\bu(\cG'_\bu,\phi_\bu),
\]
where $\cG'_\bu:=\cG'|_{\P^1_\bu-Y_\bu}$, $\phi_\bu:=\phi|_{\bG_\bu\times_\bu\dot Y_\bu^h}$.

Using property (i), find an element $\gamma_\bu\in\bG_\bu(\dot Y_\bu^h)$ such that the $\bG_\bu$-bundle $\Gl_\bu(\cG'_\bu,\phi_\bu\circ\gamma_\bu)$ is trivial. For this $\gamma_\bu$ find elements $\alpha$ and $\beta_\bu$ as in (ii). Finally take the $\bG$-bundle $\Gl(\cG',\phi\circ\alpha)$. Then its restriction to $\P_\bu^1$ is trivial. Indeed, one has a chain of $\bG_\bu$-bundle isomorphisms
\begin{multline*}
\Gl(\cG',\phi \circ \alpha)|_{\P^1_\bu}\cong\Gl_\bu(\cG'_\bu,\phi_\bu \circ \alpha|_{\dot Y_\bu^h})
\cong\\
\Gl_\bu(\cG_\bu',\phi_\bu \circ
\alpha|_{\dot Y_\bu^h}\circ \beta_\bu|_{\dot Y_\bu^h})=
\Gl_\bu(\cG_\bu', \phi_\bu \circ \gamma_\bu),
\end{multline*}
which is trivial by the very choice of $\gamma_\bu$. Thus, ($*$) will be achieved.

Let us prove (i) and (ii). If $u\in\bu'$, then there is a $k(u)$-rational point in $Y_u:=\P^1_u\cap Y$. Hence
the $\bG_u$-bundle $\cG_u:=\cG|_{\P_u^1}$ is trivial over $\P^1_u-Y_u$ (see~\cite[Cor.~3.10(a)]{GilleTorseurs}).
If $u\in\bu''$, then $\bG_u$ is anisotropic and $\cG_u$ is trivial even over $\P^1_u$ (again, by~\cite[Cor.~3.10(a)]{GilleTorseurs}). Thus $\cG|_{\P^1_\bu-Y_\bu}$ is trivial. So, (i) is achieved.

To achieve (ii) recall that for a domain $A$, its fraction field $L$, and a simple group scheme $\bH$ over $A$, having a parabolic subgroup scheme $\bP$, one can form a subgroup $\bE(L)$ of ``elementary matrices' in $\bH(L)$. It is known (see~\cite[Fait~4.3, Lemma~4.5]{Gille:BourbakiTalk}) that if $A$ is a Henselian discrete valuation ring and $\bH$ is simply-connected, then every element $\gamma\in\bH(L)$ can be written in the form $\gamma=\alpha\cdot\beta$, where $\alpha\in\bE(L)$ and $\beta\in\bH(A)$. Applying this observation in our context, we see that $\gamma_\bu\in\bG_\bu(\dot Y_\bu^h)$ can be written in the form $\gamma_\bu=\alpha_\bu\cdot\beta_\bu|_{\dot Y_\bu^h}$, where $\beta_\bu\in\bG_\bu(Y_\bu^h)$ and $\alpha_\bu\in\bE(\dot Y_\bu^h)$. It remains to observe that the natural homomorphism $\bE(\dot Y^h)\to\bE(\dot Y_\bu^h)$ is surjective, since $\dot Y_\bu^h$ is a closed subscheme of the affine scheme
$\dot Y^h$, and so (ii) is achieved.

A realization of this plan in details is given below in the paper.

\subsection{Henselization of commutative rings}
For a commutative ring $A$ we denote by $\Rad(A)$ its Jacobson ideal. One can find the following definition in~\cite[Sect.~0]{Gabber} (see also~\cite[Chapter~11]{RaynaudHenselian}).
\begin{definition}
If $I$ is an ideal in a commutative ring $A$, then the pair $(A,I)$ is called \emph{Henselian\/}, if $I\subset\Rad(A)$ and for every two relatively prime monic polynomials $\bar g, \bar h\in\bar A[t]$, where $\bar A=A/I$, and monic lifting $f\in A[t]$ of $\bar g\bar h$, there exist monic liftings $g,h\in A[t]$ such that $f=gh$. (Two polynomials are called relatively prime, if they generate the unit ideal.)
\end{definition}

\begin{lemma}\label{lm:otherhenselian}
A pair $(A,I)$ is Henselian if and only if for every \'etale $A$-algebra $A'$ and every $\sigma\in\Hom_{A-Alg}(A',A/I)$ there is a unique $\bar\sigma\in\Hom_{A-Alg}(A',A)$ that lifts $\sigma$.
\end{lemma}
\begin{proof}
See~\cite[Sect.~0]{Gabber}.
\end{proof}

\begin{lemma}\label{basechhens}
Let $(A,I)$ be a Henselian pair with a semi-local ring $A$ and $J\subset A$ be an ideal. Then the pair $(A/J,(I+J)/J)$ is Henselian.
\end{lemma}
\begin{proof}
Clearly $(I+J)/J\subset\Rad(A/J)$. Now let $\bar g,\bar h\in (A/(I+J))[t]$ be two relatively prime monic polynomials and let $f\in(A/J)[t]$ be a monic polynomial such that $f\bmod(I+J)/J=\bar g\bar h\in (A/(I+J))[t]$.

We claim that there exist relatively prime monic liftings of $\bar g$ and $\bar h$ to $(A/I)[t]$. Indeed, let $\fm_1$, \ldots, $\fm_n$ be all the maximal ideals of $A/I$ not containing $(I+J)/I$ (recall that $A$ is semi-local). By the Chinese remainder theorem we can find monic $\bar G,\bar H\in(A/I)[t]$ such that
\[
\begin{split}
\bar G\bmod (I+J)/I=\bar g, &\quad \bar G\bmod\fm_i=t^{\deg\bar g}\mbox{ for } i=1,\ldots,n,\\
\bar H\bmod (I+J)/I=\bar h, &\quad \bar H\bmod\fm_i=t^{\deg\bar h}-1\mbox{ for } i=1,\ldots,n.
\end{split}
\]
Then $\bar G$ and $\bar H$ are relatively prime. The ring homomorphism
\[
A\to(A/I)\times_{A/(I+J)}(A/J)
\]
is surjective. Thus there exists a monic polynomial $F\in A[t]$ such that $F\bmod I=\bar G\bar H$ and $F\bmod J=f$.

The pair $(A,I)$ is Henselian. Thus there exist monic liftings $G,H\in A[t]$ of $\bar G,\bar H$ such that $F=GH$. Let $g=G\bmod J\in(A/J)[t]$ and $h=H\bmod J\in(A/J)[t]$. Clearly, $g$ and $h$ are monic polynomials in $(A/J)[t]$, $f=gh\in(A/J)[t]$. And finally, $g\bmod(I+J)/J=\bar g$, $h\bmod(I+J)/J=\bar h$ in $(A/(I+J))[t]$.
Whence the Lemma.
\end{proof}

One can find the following definition in~\cite[Sect.~0]{Gabber}.
\begin{definition}\label{def:henzelisation}
The \emph{Henselization\/} of a pair $(A,I)$ is the pair $(A_I^h,I^h)$ (over $(A,I)$) defined as follows
\[
(A_I^h,I^h):=\text{the filtered inductive limit over the category $\mathcal N$ of }(A',\Ker(\sigma)),
\]
where $\mathcal N$ is the filtered category of pairs $(A',\sigma)$ such that $A'$ is an \'{e}tale $A$-algebra and $\sigma\in\Hom_{A-alg}(A',A/I)$.
\end{definition}
Note that the category $\mathcal N$ is filtered because finite direct limits preserve \'etalness.

\subsection{Henselization of affine pairs}\label{sect:distinguishedLimit} Let us translate the previous section in the geometric language. Let $S=\spec A$ be a scheme and $T=\spec(A/I)$ be a closed subscheme. Then we define a category $\wN(S,T)$ whose objects are triples $(W,\pi:W\to S,s: T\to W)$ satisfying the following conditions:\\
\indent (i) $W$ is affine;\\
\indent (ii) $\pi$ is an \'{e}tale morphism;\\
\indent (iii) $\pi\circ s$ coincides with the inclusion $T\hookrightarrow S$ (thus $s$ is a closed embedding).

A morphism from $(W,\pi,s)$ to $(W',\pi',s')$ in this category is a morphism $\rho:W\to W'$ such that $\pi'\circ\rho=\pi$ and $\rho\circ s=s'$. Note that such $\rho$ is automatically \'etale by~\cite[Cor.~17.3.5]{EGAIV.4}.

Consider the functor from $\wN(S,T)$ to the category of $S$-schemes, sending $(W,\pi,s)$ to $(W,\pi)$. This functor has a projective limit $(T^h,\pi^h)$. In the notation of the previous section we have $T^h=\spec A^h_I$ and $\pi^h:T^h\to S$ is induced by the structure of an $A$-algebra on $A^h_I$. We also get a closed $S$-embedding $s^h:T\to T^h$, that is, $\pi^h\circ s^h$ coincides with the inclusion $T\hookrightarrow S$. We call $(T^h,\pi^h,s^h)$ \emph{the Henselization of the pair $(S,T)$} (cf. Definition~\ref{def:henzelisation}). Note that the pair $(T^h,s^h(T))$ is Henselian, which means that for any affine \'etale morphism $\pi:Z\to T^h$, any section $\sigma$ of $\pi$ over $s^h(T)$ uniquely extends to a section of $\pi$ over $T^h$; this follows from Lemma~\ref{lm:otherhenselian}.

Denote by $\Nneib(S,T)$ the full subcategory of $\wN(S,T)$ consisting of triples $(W,\pi,s)$ such that\\
\indent (iv) the schemes $(\pi)^{-1}(T)$ and $s(T)$ coincide.
\begin{remark*}
    Let $(W,\pi,s)$ and $(W',\pi',s')$ be objects of $\Nneib(S,T)$. Let $\rho:W\to W'$ be a morphism such that $\pi'\circ\rho=\pi$. Then it is easy to see that $\rho\circ s=s'$ so that $\rho$ is a morphism in $\Nneib(S,T)$. (Again, $\rho$ is automatically \'etale.)
\end{remark*}

\begin{lemma}
$\Nneib(S,T)$ is co-final in $\wN(S,T)$.
\end{lemma}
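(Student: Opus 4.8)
The goal is to show that every object of $\wN(S,T)$ admits a morphism (in $\wN(S,T)$) from some object of $\Nneib(S,T)$. So fix an object $(W',\pi',s')$ of $\wN(S,T)$ and let me try to ``shrink'' it to an object of $\Nneib(S,T)$ by removing the part of $(\pi')^{-1}(T)$ not hit by $s'$. First I would observe that since $\pi'$ is \'etale, it is in particular unramified, hence the diagonal $W'\to W'\times_S W'$ is an open immersion; combined with the fact that $s'(T)$ is a connected component of $(\pi')^{-1}(T)$ in the fibrewise sense (more precisely, $s'$ being a section of the \'etale map $\pi'$, its image $s'(T)$ is open and closed in $(\pi')^{-1}(T)$), the complement $(\pi')^{-1}(T)\setminus s'(T)$ is a closed subset of $(\pi')^{-1}(T)$, which is closed in $W'$ since $\pi'$ restricted to $(\pi')^{-1}(T)$ is (being the base change of the closed immersion $T\hookrightarrow S$ along $\pi'$) a closed map onto its image — actually, more simply, $(\pi')^{-1}(T)$ is closed in $W'$ because $T$ is closed in $S$.

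\textbf{Key steps.} (1) Set $F := (\pi')^{-1}(T)\setminus s'(T)$. I would argue that $F$ is closed in $W'$: it is closed in $(\pi')^{-1}(T)$ because $s'(T)$ is open in $(\pi')^{-1}(T)$ (a section of a separated \'etale morphism has open image on each fibre, and in fact here globally, since $s'$ is also a closed immersion into $W'$, so $s'(T)$ is closed too — being clopen in $(\pi')^{-1}(T)$); and $(\pi')^{-1}(T)$ is closed in $W'$ because $T$ is closed in $S$. (2) Let $W'' := W'\setminus F$, an open subscheme of $W'$, hence still affine? — here is the one genuinely delicate point: an open subscheme of an affine scheme need not be affine. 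But $F$ is the vanishing locus: I would use that $T = V(I)$, choose generators or rather use that $F$ is closed and disjoint from $s'(T)$, and replace $W''$ by a basic open (principal open) affine neighbourhood of $s'(T)$ inside $W'$ avoiding $F$ — this requires $s'(T)$ to be contained in such a principal open, which follows because $s'(T)$ is affine and closed in $W'$ and $F$ is closed disjoint from it, so by prime avoidance / the fact that $W'$ is affine one finds $g \in \Gamma(W',\mathcal O)$ with $g|_{s'(T)}$ a unit (equivalently $g \equiv 1$ on $s'(T)$ after adjusting) and $g|_F = 0$; then $W'' := (W')_g$ is affine, open in $W'$, contains $s'(T)$, and misses $F$. (3) Let $\pi'' := \pi'|_{W''}$ and $s'' := s'$ (landing in $W''$ now); then $(W'',\pi'',s'')$ is an object of $\wN(S,T)$, and by construction $(\pi'')^{-1}(T) = (\pi')^{-1}(T)\cap W'' = s'(T)$, so condition (iv) holds and $(W'',\pi'',s'') \in \Nneib(S,T)$. (4) The open immersion $W''\hookrightarrow W'$ is a morphism in $\wN(S,T)$ (it commutes with the projections to $S$ and with the sections from $T$, and is \'etale, indeed an open immersion). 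This exhibits the required cofinal object, proving the lemma.

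\textbf{Main obstacle.} The crux is step (2): ensuring the shrunk neighbourhood stays affine. Naively removing the closed set $F$ from the affine $W'$ gives a quasi-affine scheme that need not be affine, so I cannot just take $W'\setminus F$. The fix is to find a single function $g$ on $W'$ vanishing on $F$ and invertible along $s'(T)$, so that $(W')_g$ does the job; this is available because $W'$ is affine, $s'(T)$ and $F$ are disjoint closed subsets, and $s'(T)$ is itself affine (it is $\cong T$), so one can use the prime-avoidance argument: the ideal $J$ of functions vanishing on $F$ is not contained in any prime corresponding to a point of $s'(T)$, hence — since $s'(T)$ is quasi-compact — after passing to a suitable principal open one arranges $J$ to generate the unit ideal there, producing $g$. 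Everything else (checking the categorical compatibilities, \'etaleness of the inclusion) is formal.
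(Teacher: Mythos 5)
Your proof is correct and takes essentially the same route as the paper: you observe that $s'(T)$ is open and closed in $(\pi')^{-1}(T)$ (since $s'$ is a section of the \'etale morphism $(\pi')^{-1}(T)\to T$), set $F=(\pi')^{-1}(T)\setminus s'(T)$, and pass to a principal open $(W')_g$ where $g$ vanishes on $F$ and is invertible along $s'(T)$, so that $(W')_g$ is affine, still contains $s'(T)$, and meets $(\pi')^{-1}(T)$ exactly in $s'(T)$. The one spot where your write-up is more elaborate than necessary is the ``prime avoidance'' justification for producing $g$: since $F\sqcup s'(T)$ is a closed subscheme of the affine scheme $W'$, the restriction map $\Gamma(W',\cO_{W'})\to\Gamma(F\sqcup s'(T),\cO)$ is surjective, so one simply lifts the function that is $0$ on $F$ and $1$ on $s'(T)$; this is exactly what the paper does and avoids the quasi-compactness/covering discussion you sketch.
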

\begin{proof}
We need to check that for an object $(W,\pi,s)$ of $\wN(S,T)$ there is an object $(W',\pi',s')$ of $\Nneib(S,T)$ and a morphism $(W',\pi',s')\to(W,\pi,s)$. Let $\pi_T:(\pi)^{-1}(T)\to T$ be the base-changed morphism, which is \'etale. It follows from (iii) that $s$ is a section of $\pi_T$. As was already mentioned above, a section of an \'etale morphism is \'etale by~\cite[Cor.~17.3.5]{EGAIV.4}. Thus $s$ is both an open and a closed embedding, and we have a disjoint union decomposition $(\pi)^{-1}(T)=s(T)\coprod T_0$ for a scheme $T_0$. All our schemes are affine, so there is a regular function $f$ on $W$ such that $f=0$ on $T_0$ and $f=1$ on $s(T)$.

Set $W'=W-\{f=0\}$, $\pi'=\pi|_{W'}$, $s'=s$. Then $W'$ is affine; thus $(W',\pi',s')\in\Nneib(S,T)$, and we have an obvious morphism $(W',\pi',s')\to(W,\pi,s)$.
\end{proof}
The lemma implies that the category $\Nneib(S,T)$ is co-filtered, and that the Henselization can be computed by taking the limit over $\Nneib(S,T)$, instead of $\wN(S,T)$. It is now easy to check that if $(T^h,\pi^h,s^h)$ is the Henselization of $(S,T)$, then $(\pi^h)^{-1}(T)=s^h(T)$.

Note the two following properties of Henselization of affine pairs.
\begin{lemma}\label{lm:propi}
Let $T$ be a semi-local scheme. Then the Henselization commutes with restriction to closed subschemes. In more detail, if $S'\subset S$ is a closed subscheme, then we get a base change functor $\wN(S,T)\to\wN(S',T\times_SS')$. This functor yields a morphism
$(T\times_SS')^h\to T^h\times_SS'$. This morphism is an isomorphism and the canonical section $s':T\times_SS'\to(T\times_SS')^h$ coincides under this identification with
\[
    s\times_S\Id_{S'}:T\times_SS'\to T^h\times_SS'.
\]
\end{lemma}
\begin{proof}[Sketch of a proof]
Let us construct a morphism in the opposite direction. Since $T$ is semi-local, $T^h$ is also semi-local (the proof is straightforward). Therefore by Lemma~\ref{basechhens} the pair $(T^h\times_SS',s(T)\times_SS')$ is Henselian.

Let $(W,\pi,s)\in\wN(S',T\times_SS')$. From $\pi$ by a base change we get an \'etale morphism $\tilde\pi:(T^h\times_SS')\times_{S'}W\to T^h\times_SS'$. This morphism has an obvious section over $s(T)\times_SS'$. Since the pair $(T^h\times_SS',s(T)\times_SS')$ is Henselian, this section extends uniquely to a section of $\tilde\pi$ over $T^h\times_SS'$, which, in turn, gives a morphism $T^h\times_SS'\to W$. These morphisms give the desired morphism $T^h\times_SS'\to(T\times_SS')^h$.
\end{proof}
\begin{lemma}\label{lm:propii}
If $T=\coprod_i T_i$ is a disjoint union, then $T^h=\coprod_i T_i^h$.
\end{lemma}
\begin{proof}[Sketch of a proof]
Note that the functor from $\prod_i\wN(S,T_i)$ to $\wN(S,T)$, sending a collection of schemes to their disjoint union, is co-final.
\end{proof}

\subsection{Gluing principal $\bG$-bundles}\label{sect:gluing}
Recall that $U=\spec R$, where $R$ is the semi-local ring of finitely many closed points on an irreducible smooth affine variety over an infinite field $k$. Also, $\bG$ is a simple simply-connected group scheme over $U$, and $Y$ is a closed subscheme of $\P_U^1$ \'etale over $U$. We may assume that $Y\subset\A_U^1$ (otherwise, just change the coordinate on $\P_U^1$). We will apply the Henselization discussed above to $S=\A_U^1$, $T=Y$. Thus we have an affine scheme $Y^h$ with a projection $\pi^h:Y^h\to Y$ and a section $s^h:Y\to Y^h$. Set $\dot Y^h=Y^h-s(Y)$.
\begin{lemma}\label{lm:affine}
    If $(W,\pi,s)\in\Nneib(\A_U^1,Y)$, then $s(Y)$ is a principal divisor in $W$ and therefore $W-s(Y)$ is affine.
\end{lemma}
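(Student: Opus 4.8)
The plan is to exploit condition~(iv) defining $\Nneib(\A_U^1,Y)$, namely that $(\pi')^{-1}(Y)=s'(Y)$, together with the fact that $Y$ is a Cartier divisor in $\A_U^1$. First I would recall why $Y$ is a principal divisor in $\A_U^1=\spec R[t]$. Since $R$ is semi-local and $Y$ is finite over $U$, the image of $Y$ under the chosen closed embedding $Y\hookrightarrow\A_U^1$ is the zero locus of a monic polynomial $g(t)\in R[t]$ (a finite $R$-algebra that is a quotient of $R[t]$ and is $R$-projective of constant rank --- using that $U$ is connected --- is cut out by the characteristic polynomial of multiplication by $t$, which is monic); hence $s'(Y)$, being the preimage $(\pi')^{-1}(Y)$ by condition~(iv), is the zero locus of $\pi'^{*}(g)$ on $W'$. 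This already exhibits $s'(Y)$ as a principal divisor, provided $\pi'^{*}(g)$ is not a zero divisor on $W'$.

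The key step is therefore to check that $\pi'^{*}(g)$ is a nonzerodivisor in $\mathcal O(W')$, equivalently that $s'(Y)$ contains no associated component of $W'$, equivalently that $s'(Y)$ has codimension $\geq 1$ in $W'$ locally everywhere (as $W'$ is étale over the regular scheme $\A^1_U$, hence regular, "codimension $\geq 1$" suffices for "nonzerodivisor"). Since $\pi'$ is étale, $W'$ is regular and equidimensional of the same dimension as $\A^1_U$ along each component; and $s'(Y)$, being étale over $U$ via $\pi'\circ s'$ while $\A^1_U\to U$ has relative dimension $1$, has dimension equal to $\dim U = \dim\A^1_U - 1$. Thus $s'(Y)$ is a proper closed subset of each component of $W'$, so meets no associated prime, so $\pi'^{*}(g)$ is a nonzerodivisor, and $s'(Y)=\{\pi'^{*}(g)=0\}$ is a principal (Cartier) divisor.

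Granting this, $W'-s'(Y) = W'_{\pi'^{*}(g)}$ is the basic open subset of the affine scheme $W'$ where the regular function $\pi'^{*}(g)$ is invertible, hence is itself affine, which is the second assertion. I would present these three points --- $Y=\{g=0\}$ with $g$ monic; $s'(Y)=(\pi')^{-1}(Y)$ has codimension one in the regular scheme $W'$ so $\pi'^{*}(g)$ is a nonzerodivisor; therefore $W'-s'(Y)$ is a principal open in an affine scheme --- in that order.

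The main obstacle I anticipate is purely bookkeeping: one must be careful that $Y$ is a principal divisor in $\A^1_U$ globally (not merely Cartier), which uses that $R$ is semi-local (so the finite projective $R$-module $\mathcal O(Y)$ is free, and in particular the rank is locally constant; on each connected component of $\spec R$ one gets a monic polynomial, and since $U=\spec R$ is irreducible there is a single monic $g$). The only subtle point in the codimension count is that one should verify $W'$ has pure dimension $\dim\A^1_U$ along every component --- this holds because $\pi'$ is étale, hence flat with $0$-dimensional fibers, so it does not contract or blow up dimension --- and that no component of $W'$ is contained in $s'(Y)$; both follow formally from flatness of $\pi'$ over the irreducible base $\A^1_U$ together with condition~(iv). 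None of this requires any computation beyond these dimension-theoretic observations.
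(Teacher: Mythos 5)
Your proof is correct and follows the same route as the paper's (much terser) proof: use semi-locality of $R$ and irreducibility of $U$ to realize $Y$ as the zero locus of a monic $g\in R[t]$, then pull back under $\pi'$ to get $s'(Y)=(\pi')^{-1}(Y)=\{\pi'^{*}g=0\}$, a principal divisor in the affine scheme $W'$. Your nonzerodivisor check via dimension theory and regularity of $W'$ is sound but more than is strictly needed for the affineness conclusion, and could in any case be replaced by the one-liner that the flat (étale) map $\pi'$ pulls back nonzerodivisors to nonzerodivisors.
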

\begin{proof}
Since $U$ is a regular semi-local ring, $Y$ is a principal divisor in $\A_U^1$. Thus $s(Y)=(\pi)^{-1}(Y)$ is also a principal divisor in the affine scheme $W$.
\end{proof}

Let us make a general remark. Let $\cF$ be a $\bG$-bundle over a $U$-scheme $T$. By definition, a trivialization of $\cF$ is a $\bG$-equivariant isomorphism $\bG\times_UT\to\cF$. Equivalently, it is a section of the projection $\cF\to T$. If $\phi$ is such a trivialization and $f:T'\to T$ is a $U$-morphism, we get a trivialization $f^*\phi$ of $f^*\cF$. Sometimes we denote this trivialization by $\phi|_{T'}$. We also sometimes call a trivialization of $f^*\cF$ \emph{a trivialization of $\cF$ on $T'$}.

We will recall some consequences of Nisnevich descent. Let $in:\A^1_U\hookrightarrow\P^1_U$ be the standard inclusion.
For each object $(W,\pi,s)$ in $\Nneib(\A_U^1,Y)$ there is an elementary distinguished square (see~\cite[Def.~2.1]{VoevodskyCongress})
\begin{equation}\label{eq:distinguished}
\begin{CD}
W-s(Y) @>>>W\\
@VVV @VV in\circ\pi V\\
\P_U^1 - Y @>>>\P^1_U.
\end{CD}
\end{equation}
It is used here that $Y$ is closed in $\P^1_U$.

The elementary distinguished square (\ref{eq:distinguished}) can be used to construct principal $\bG$-bundles over $\P_U^1$ via Nisnevich descent. In particular, one can glue a principal bundle over $\P_U^1-Y$ with a trivial principal bundle over $W$ via an isomorphism on $W-s(Y)$. More precisely, let $\cA(W,\pi,s)$ be the category of pairs $(\cE,\phi)$, where $\cE$ is a $\bG$-bundle over $\P_U^1$, $\phi$ is a trivialization of $\cE|_W:=(in\circ\pi)^*\cE$. A morphism between $(\cE,\phi)$ and $(\cE',\phi')$ is an isomorphism $\cE\to\cE'$ compatible with trivializations.

Similarly, let $\cB(W,\pi,s)$ be the category of pairs $(\cE,\phi)$, where $\cE$ is a $\bG$-bundle over $\P_U^1-Y$, $\phi$ is a trivialization of $\cE|_{W-s(Y)}$.

\begin{lemma}\label{lm:groupoids}
The categories $\cA(W,\pi,s)$ and $\cB(W,\pi,s)$ are groupoids whose objects have no non-trivial automorphisms.
\end{lemma}
\begin{proof}
    It is obvious that the categories are groupoids. Consider an object $(\cE,\phi)\in\cA(W,\pi,s)$. Let $\alpha$ be an automorphism of $\cE$ such that $\alpha|_W=\Id_{\cE|_W}$. We need to show that $\alpha=\Id_\cE$. This follows immediately from the fact that the $\Aut(\cE)$ is represented by a scheme affine over $\P_U^1$ (see the proof of Corollary~\ref{corollary}), while $\P_U^1$ is irreducible. The statement for $\cB(W,\pi,s)$ is proved similarly.
\end{proof}

Consider the restriction functor $\Phi:\cA(W,\pi,s)\to\cB(W,\pi,s)$. The following proposition is a version of Nisnevich descent.
\begin{proposition}\label{pr:gluing}
    The functor $\Phi$ is an equivalence of categories.
\end{proposition}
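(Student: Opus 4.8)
The plan is to prove that $\Phi$ is essentially surjective and fully faithful, treating the elementary distinguished square~(\ref{eq:distinguished}) exactly as one treats a Zariski (or Nisnevich) cover for gluing torsors. The key geometric input is that $\{W'-s'(Y)\to\P^1_U-Y,\ in\circ\pi':W'\to\P^1_U\}$ is a cover in the Nisnevich topology: $in\circ\pi'$ is \'etale (composition of an open immersion and an \'etale morphism), and over $Y$ it restricts to the closed embedding $s'$, which is an isomorphism onto its image by condition~(iv) defining $\Nneib$; hence every point of $\P^1_U$ is hit by a point with the same residue field, either from $W'$ (points of $Y$) or from $\P^1_U-Y$ (points off $Y$). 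The two schemes $W'$ and $\P^1_U-Y$ cover $\P^1_U$, and their fiber product over $\P^1_U$ is identified, using the square, with $W'-s'(Y)$.

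First I would check essential surjectivity. Given $(\cE,\phi)\in\cB(W',\pi',s')$, i.e.\ a $\bG$-bundle $\cE$ on $\P^1_U-Y$ together with a trivialization $\phi$ over $W'-s'(Y)$, I build a $\bG$-bundle $\widehat\cE$ on $\P^1_U$ by gluing $\cE$ over $\P^1_U-Y$ to the trivial bundle $\bG\times_UW'$ over $W'$ along $W'-s'(Y)$ via $\phi$; this is precisely the construction denoted $\Gl(\cE,\phi)$ in Section~\ref{sect:outline}. Descent of affine schemes with compatible $\bG$-action along the faithfully flat (indeed, the square makes it an fppf, even Nisnevich, cover) map $W'\sqcup(\P^1_U-Y)\to\P^1_U$ produces $\widehat\cE$ as a scheme affine and faithfully flat over $\P^1_U$; the simple-transitive action is checked locally on the cover, where it is the action on a trivial bundle. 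The tautological trivialization $\widehat\phi$ of $\widehat\cE|_{W'}$ comes from the second gluing chart, and $\Phi(\widehat\cE,\widehat\phi)\cong(\cE,\phi)$ by restricting back to $\P^1_U-Y$. Here one uses that the square is both a pushout (so that $\P^1_U$ is recovered as the pushout of $W'\leftarrow W'-s'(Y)\to\P^1_U-Y$ in a suitable sense, guaranteeing a bundle glued on the two opens is well-defined) and cartesian.

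Next I would check full faithfulness. Given $(\cE_1,\phi_1),(\cE_2,\phi_2)\in\cA(W',\pi',s')$, a morphism in $\cB$ is an isomorphism $\psi:\cE_1|_{\P^1_U-Y}\to\cE_2|_{\P^1_U-Y}$ compatible with $\phi_1|_{W'-s'(Y)},\phi_2|_{W'-s'(Y)}$. The compatibility says that over $W'-s'(Y)$, $\psi$ corresponds, after the two trivializations, to right translation by some $g\in\bG(W'-s'(Y))$, and the compatibility forces... no: compatibility with trivializations forces $g$ to be the identity on $W'-s'(Y)$. Thus $\psi$ and the identity isomorphism of the trivial bundle $\bG\times_UW'$ agree on the overlap $W'-s'(Y)$, hence glue (again by descent along the square, now for morphisms of bundles, i.e.\ for sections of an $\Aut$-scheme, which is affine over the base) to an isomorphism $\widehat\psi:\cE_1\to\cE_2$ over $\P^1_U$ restricting to $\psi$ and compatible with $\phi_1,\phi_2$; uniqueness of $\widehat\psi$ is the faithfulness, and its existence is the fullness. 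The affineness statements from Lemma~\ref{lm:affine} (that $W'-s'(Y)$ is affine) are what make all these descent arguments legitimate, since $\bG$-bundles and their morphisms are affine over the base and hence satisfy fppf descent.

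The main obstacle I expect is the bookkeeping of the gluing/descent step: one must verify carefully that the elementary distinguished square is simultaneously a Nisnevich cover and a pushout square of schemes, so that bundles and morphisms defined on the two ``charts'' $W'$ and $\P^1_U-Y$ and agreeing on the ``overlap'' $W'-s'(Y)$ descend to genuine objects on $\P^1_U$ --- this is the content of Nisnevich (or Zariski-style) descent for affine morphisms with $\bG$-action, and while each ingredient (faithfully flat descent for affine schemes, the fact that $in\circ\pi'$ is \'etale and an isomorphism near $Y$ by condition~(iv), the cartesian-ness of the square) is standard, assembling them into ``$\Phi$ is an equivalence'' requires care. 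A clean way to organize it is to cite~\cite[Definition~2.1]{VoevodskyCongress} for the elementary distinguished square and the known fact that quasi-coherent sheaves, and more generally affine schemes with extra structure, satisfy descent for such squares; then the only bundle-specific point is that the simple-transitivity of the $\bG$-action is a local condition and hence descends, which is immediate.
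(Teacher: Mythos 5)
Your approach matches the paper's: the proof in the paper is a one-line citation of Lemma~\ref{lm:affine} together with~\cite[Prop.~2.6(iv)]{C-TO}, which is precisely the Nisnevich-patching statement for torsors along an elementary distinguished square that you sketch. One imprecision worth flagging in your direct argument: gluing $\cE$ to $\bG\times_U W'$ via $\phi$ on $W'-s'(Y)$ is not literally fppf descent data along the cover $W'\sqcup(\P^1_U-Y)\to\P^1_U$, since that would require an isomorphism over all of $W'\times_{\P^1_U}W'$, not just over $W'-s'(Y)$. The missing piece is supplied by condition~(iv) in the definition of $\Nneib$: since $in\circ\pi':W'\to\P^1_U$ is \'etale and restricts to an isomorphism over $Y$, the diagonal $\Delta(W')\subset W'\times_{\P^1_U}W'$ is open and closed with complement lying entirely over $\P^1_U-Y$, so $\phi$ together with the cocycle condition determines the descent isomorphism everywhere. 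That extra step is exactly what the cited proposition of Colliot-Th\'el\`ene and Ojanguren packages; your reference to~\cite[Definition~2.1]{VoevodskyCongress} only gives the definition of the square, not the descent theorem, so~\cite[Prop.~2.6(iv)]{C-TO} is the reference you want, with Lemma~\ref{lm:affine} verifying its affineness hypothesis. You also hedge, correctly, on whether the square is a pushout of schemes --- in general it is not, and this should be regarded as an output of the patching theorem (a pushout in the category of Nisnevich sheaves) rather than as an input.
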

\begin{proof}
Let us prove that $\Phi$ is essentially surjective. Let $(\cE,\phi)$ be an object of $\cB(W,\pi,s)$, set $\cE'=\cE|_{\A_U^1-Y}$.
By Lemma~\ref{lm:affine} and~\cite[Prop.~2.6(iv)]{C-TO} there is a $\bG$-bundle $\cE''$ over $\A_U^1$, a trivialization $\phi''$ of $\cE''$ on $W$, and an isomorphism
\[
    \cE''|_{\A_U^1-Y}\to\cE'=\cE|_{\A_U^1-Y}
\]
compatible with the trivializations on $W-s(Y)$. We can use this isomorphism to glue $\cE$ with $\cE''$ to make a $\bG$-bundle $\tilde\cE$ over $\P_U^1$ (gluing in the Zariski topology). The trivialization $\phi''$ gives rise to a trivialization $\tilde\phi$ of $\tilde\cE$ on $W$. Clearly, $\Phi(\tilde\cE,\tilde\phi)\cong(\cE,\phi)$.

It follows immediately from Lemma~\ref{lm:groupoids} that $\Phi$ is faithful. It remains to show that $\Phi$ is full. Let $(\cE,\phi)$ and $(\cE',\phi')$ be objects of $\cA(W,\pi,s)$. Let $\alpha$ be a morphism from $\Phi(\cE,\phi)$ to $\Phi(\cE',\phi')$. We need to show that $\alpha$ is of the form $\Phi(\beta)$.

Recall that the presheaf $\Iso(\cE,\cE')$ is represented by a $\P^1_U$-scheme (see the proof of Corollary~\ref{corollary}), so, in particular, it is a sheaf in the Nisnevich topology. Thus, since~\eqref{eq:distinguished} is an elementary distinguished square, to give a section of $\Iso(\cE,\cE')$ over $\P^1_U$ is the same as to give sections over $\P^1_U-Y$ and over $W$ that coincide over $W-s(Y)$ (see~\cite[Sect.~3, Prop.~1.3]{MorelVoevodsky}).

Note that $\alpha$ gives a section of $\Iso(\cE,\cE')$ over $\P^1_U-Y$, while $\phi'\circ\phi^{-1}$ is a section over $W$. By definition of $\cB(W,\pi,s)$ these sections coincide on $W-s(Y)$, so we obtain a section $\beta$ of $\Iso(\cE,\cE')$ over $\P^1_U$.
By construction $\beta$ is a morphism in $\cA(W,\pi,s)$ and $\Phi(\beta)=\alpha$.
\end{proof}

The main Cartesian square we will work with is
\begin{equation}
\begin{CD}\label{eq:distinguishedLimit}
\dot Y^h @>>> Y^h\\
@VVV @VV{in\circ\pi^h}V\\
\P^1_U - Y @>>>\P^1_U.
\end{CD}
\end{equation}
\begin{proposition}\label{pr:affine}\stepzero
\noindstep\label{b} $\dot Y^h$ is the projective limit of $W-s(Y)$ over $\Nneib(\A_U^1,Y)$.\\
\noindstep\label{c} $\dot Y^h$ is an affine scheme.
\end{proposition}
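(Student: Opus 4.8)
The plan is to prove the two parts essentially independently, using the co-filtered description of the henselization together with Lemma~\ref{lm:affine}.

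\emph{Part (b).} First I would recall that $Y^h = \varprojlim_{\Nneib(\A_U^1,Y)} W'$ by construction (using that $\Nneib(\A_U^1,Y)$ is co-final in $\wN(\A_U^1,Y)$, as established in Section~\ref{sect:distinguishedLimit}), and that the canonical section $s:Y\to Y^h$ is the limit of the sections $s':Y\to W'$. By definition $\dot Y^h = Y^h - s(Y)$, so the task is to identify $Y^h - s(Y)$ with $\varprojlim (W' - s'(Y))$. The key point is that taking the complement of a closed subscheme commutes with the relevant projective limit here: each transition morphism $\rho:W'\to W''$ in $\Nneib(\A_U^1,Y)$ satisfies $\rho\circ s' = s''$ and (since $\rho$ is \'etale and, by condition (iv), $(\pi')^{-1}(Y)=s'(Y)$, likewise for $W''$) one has $\rho^{-1}(s''(Y)) = s'(Y)$. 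Hence $\rho$ restricts to a morphism $W'-s'(Y)\to W''-s''(Y)$, and the system $\{W'-s'(Y)\}$ is a well-defined pro-object over the same co-filtered index category. Since the projection $\pi:Y^h\to\A_U^1$ satisfies $\pi^{-1}(Y)=s(Y)$ (noted at the end of Section~\ref{sect:distinguishedLimit}), pulling back along $W'\to Y^h$ one checks that the closed subscheme $s(Y)\subset Y^h$ is exactly the limit of the $s'(Y)$, so its open complement $\dot Y^h$ is the limit of the complements $W'-s'(Y)$. I would phrase this last step via the universal property: a morphism from a scheme $V$ to $\dot Y^h$ is the same as a morphism to $Y^h$ landing in the open $\dot Y^h$, which by the limit description of $Y^h$ and affineness is the same as a compatible family $V\to W'$ whose image avoids $s'(Y)$ for all $W'$ (here co-filteredness lets us pass between "avoids $s'(Y)$ for one $W'$" and "for all"), i.e.\ a compatible family $V\to W'-s'(Y)$.

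\emph{Part (c).} Given (b), the scheme $\dot Y^h$ is a projective limit of the affine schemes $W'-s'(Y)$; the affineness of each $W'-s'(Y)$ is precisely Lemma~\ref{lm:affine}. A co-filtered projective limit of affine schemes with affine transition maps is affine (it is $\spec$ of the colimit of the coordinate rings), so $\dot Y^h$ is affine. The transition maps $W'-s'(Y)\to W''-s''(Y)$ are restrictions of the \'etale, hence separated and in particular affine-in-the-relevant-sense, maps $\rho$; more simply, any morphism between affine schemes is affine, so there is nothing extra to check.

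\emph{Main obstacle.} The only genuinely delicate point is the commutation of "remove the closed subscheme $s'(Y)$" with the projective limit in Part (b): one must be sure that the closed subscheme $s(Y)\subset Y^h$ is the limit of the $s'(Y)$ and that its complement is the limit of the complements, rather than something larger. This rests on the two facts that (i) for objects of $\Nneib$ we have $(\pi')^{-1}(Y)=s'(Y)$, so transition maps pull back $s''(Y)$ back to exactly $s'(Y)$, and (ii) $\pi^{-1}(Y)=s(Y)$ in the limit. Both are already recorded in Section~\ref{sect:distinguishedLimit}, so the argument is routine once these are invoked; I would simply make the universal-property bookkeeping explicit and cite those facts.
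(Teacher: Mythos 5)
Your proposal is correct and follows the same route as the paper, which disposes of the proposition in two sentences by invoking exactly the two facts you identify: the equality $s(Y)=\pi^{-1}(Y)$ (equivalently $(\pi')^{-1}(Y)=s'(Y)$ in $\Nneib$) for part (b), and Lemma~\ref{lm:affine} for part (c). You have simply unwound the implicit universal-property bookkeeping that the paper leaves to the reader.

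One small remark on the wording of part (b): the co-filteredness you invoke to "pass between avoids $s'(Y)$ for one $W'$ and for all" is not actually needed at that point. Since $s(Y)=\pi^{-1}(Y)$ in $Y^h$ and $s'(Y)=(\pi')^{-1}(Y)$ in $W'$, and the composite $Y^h\to W'\to\A^1_U$ equals $\pi$, a morphism $V\to Y^h$ avoids $s(Y)$ if and only if its composite with any one projection $Y^h\to W'$ avoids $s'(Y)$; the equivalence holds for each $W'$ individually. A slightly tighter phrasing is to observe that $\dot Y^h=Y^h\times_{\A^1_U}(\A^1_U-Y)$ and $W'-s'(Y)=W'\times_{\A^1_U}(\A^1_U-Y)$, so (b) is just the statement that the fibre product over $\A^1_U$ with the fixed scheme $\A^1_U-Y$ commutes with the co-filtered limit of affines $Y^h=\varprojlim W'$. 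Either way, the argument is sound.
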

\begin{proof}
Part~\eqref{b} follows from the definition of projective limit and the equality $s^h(Y)=(\pi^h)^{-1}(Y)$. Part~\eqref{c} follows from Lemma~\ref{lm:affine}, part~\eqref{b}, and~\cite[Prop.~8.2.3]{EGAIV-3}.
\end{proof}

Let $\cA$ be  the category  of pairs $(\cE,\psi)$, where $\cE$ is a $\bG$-bundle over $\P_U^1$, $\psi$ is a trivialization of $\cE|_{Y^h}:=(in\circ\pi^h)^*\cE$. A morphism between $(\cE,\psi)$ and $(\cE',\psi')$ is an isomorphism $\cE\to\cE'$ compatible with trivializations.

Similarly, let $\cB$ be the category of pairs $(\cE,\psi)$, where $\cE$ is a $\bG$-bundle over $\P_U^1-Y$, $\psi$ is a trivialization of $\cE|_{\dot Y^h}$.

\begin{lemma}\label{lm:groupoidAB}
The categories $\cA$ and $\cB$ are groupoids whose objects have no non-trivial automorphisms.
\end{lemma}
\begin{proof}
    It is obvious that the categories are groupoids. Note that for a $\bG$-bundle $\cE$ we have
    \[
        (\Aut(\cE))(Y^h)=\lim_{(W,\pi,s)\in\Nneib(\A_U^1,Y)}(\Aut(\cE))(W).
    \]
    Thus an automorphism of $\cE$ that is equal to the identity on $Y^h$ is equal to the identity on some $W$ with $(W,\pi,s)\in\Nneib(\A_U^1,Y)$. Now Lemma~\ref{lm:groupoids} shows that such an automorphism is equal to the identity. The statement for objects of $\cB$ is proved similarly in view of Proposition~\ref{pr:affine}\eqref{b}.
\end{proof}

Consider the restriction functor $\Psi:\cA\to\cB$.
\begin{proposition}\label{pr:gluing2}
    The functor $\Psi$ is an equivalence of categories.
\end{proposition}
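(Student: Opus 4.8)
The plan is to reduce the statement about the henselization $Y^h$ to the already-established Proposition~\ref{pr:gluing} about individual objects $(W',\pi',s')$ of $\Nneib(\A_U^1,Y)$, by passing to the limit. Recall from the remark preceding the statement that all the categories involved are groupoids with no non-trivial automorphisms, so it suffices to check that $\Psi$ is essentially surjective and injective on isomorphism classes; since it is visibly faithful, the content is essential surjectivity together with fullness.

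First I would set up the limit formalism. By Proposition~\ref{pr:affine}(b), $\dot Y^h = \varprojlim_{\Nneib(\A_U^1,Y)} (W'-s'(Y))$, and by construction $Y^h = \varprojlim W'$, with the transition maps all affine and \'etale. I would invoke the standard limit results for schemes and quasi-coherent sheaves (EGA~IV, §8) in the form: a $\bG$-bundle over $\P^1_U-Y$, being finitely presented, and likewise a trivialization of its restriction to the affine scheme $\dot Y^h$, descend to \emph{some} finite level $(W',\pi',s')$; that is, given $(\cE,\psi)\in\cB$, there is an object $(W',\pi',s')$ of $\Nneib(\A_U^1,Y)$ and a trivialization $\psi'$ of $\cE|_{W'-s'(Y)}$ whose pullback to $\dot Y^h$ is $\psi$. (Here one uses that $\dot Y^h$ and $W'-s'(Y)$ are affine, by Proposition~\ref{pr:affine}(c) and Lemma~\ref{lm:affine}, so ``trivialization'' just means ``a section'', which is a finitely presented datum.) Having done this, $(\cE,\psi')$ is an object of $\cB(W',\pi',s')$; applying Proposition~\ref{pr:gluing}, it comes from an object $(\widetilde\cE,\widetilde\psi)$ of $\cA(W',\pi',s')$, i.e.\ a $\bG$-bundle on $\P^1_U$ with a trivialization over $W'$. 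Pulling $\widetilde\psi$ back along $Y^h\to W'$ gives a trivialization over $Y^h$, hence an object of $\cA$ mapping to $(\cE,\psi)$ up to isomorphism. This gives essential surjectivity. Fullness is handled the same way: a morphism in $\cB$ between the images of two objects of $\cA$ is, after descending both objects and the morphism to a common finite level, a morphism in some $\cB(W',\pi',s')$, which by Proposition~\ref{pr:gluing} lifts uniquely to $\cA(W',\pi',s')$ and then pulls back to the desired morphism in $\cA$.

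The main obstacle I expect is the bookkeeping in the limit/spreading-out argument: one must check that the category $\Nneib(\A_U^1,Y)$ is co-filtered (this was established above), that the ``dotted'' and ``undotted'' limits are compatible so that a trivialization over $\dot Y^h$ genuinely arises from one over $W'-s'(Y)$ for a \emph{cofinal} system of $W'$, and that the canonical square~\eqref{eq:distinguishedLimit} is the limit of the squares~\eqref{eq:distinguished} in a way that lets Proposition~\ref{pr:gluing} be applied levelwise and then passed to the limit. None of this is deep, but it requires care that the spreading-out of $\cE$, of $\psi$, and of morphisms can all be arranged over one and the same object of $\Nneib(\A_U^1,Y)$, using that $\Nneib(\A_U^1,Y)$ is co-filtered so finitely many finite-level data can be pushed to a common refinement. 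Once the limit bookkeeping is in place, the proposition follows formally from Proposition~\ref{pr:gluing}.
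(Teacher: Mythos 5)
Your proposal is correct and follows essentially the same route as the paper: for essential surjectivity, descend the trivialization $\psi$ to a finite level $(W',\pi',s')\in\Nneib(\A_U^1,Y)$ using Proposition~\ref{pr:affine}(b) and the affineness of Lemma~\ref{lm:affine}, apply Proposition~\ref{pr:gluing}, and restrict the resulting trivialization to $Y^h$. The paper gives exactly this argument for essential surjectivity and leaves fullness and faithfulness to the reader; your spelling-out of fullness by descending morphisms to a common finite level (and noting that faithfulness is automatic in a groupoid with trivial automorphism groups) is a reasonable completion of what the paper elides.
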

\begin{proof}
Let us prove that $\Psi$ is essentially surjective; let $(\cE,\psi)\in\cB$. Then using Lemma~\ref{lm:affine} and Proposition~\ref{pr:affine}\eqref{b}, we can find $(W,\pi,s)\in\Nneib(\A_U^1,Y)$ and a trivialization $\phi$ of
$\cE$ on $W-s(Y)$ such that $\phi|_{\dot Y^h}=\psi$. By proposition~\ref{pr:gluing} there is $(\tilde\cE,\tilde\phi)\in\cA(W,\pi,s)$ such that $\Phi(\tilde\cE,\tilde\phi)\cong(\cE,\phi)$. Then
\[
\Psi(\tilde\cE,\tilde\phi|_{Y^h})=(\tilde\cE|_{\P_U^1-Y},\tilde\phi|_{\dot Y^h})
\cong\left(\cE,\phi|_{\dot Y^h}\right)=(\cE,\psi).
\]

It follows immediately from Lemma~\ref{lm:groupoidAB} that $\Psi$ is faithful. It remains to show that $\Psi$ is full. Let $(\cE,\psi)$ and $(\cE',\psi')$ be objects of $\cA$. Let $\alpha$ be a morphism from $\Psi(\cE,\psi)$ to $\Psi(\cE',\psi')$. We need to show that $\alpha$ is of the form $\Psi(\beta)$.

We can find $(W,\pi,s)\in\Nneib(\A_U^1,Y)$ and trivializations $\phi$ and $\phi'$ of $\cE$ and $\cE'$ respectively on $W$
such that $\phi|_{Y^h}=\psi$, $\phi'|_{Y^h}=\psi'$. Using Proposition~\ref{pr:affine}\eqref{b} it is easy to check that the restriction morphism $\Iso(\cE,\cE')(W-s(Y))\to\Iso(\cE,\cE')(\dot Y^h)$ is injective. Thus $\alpha$ is a morphism in $\cB(W,\pi,s)$ from $\Phi(\cE,\phi)$ to $\Phi(\cE',\phi')$. By Proposition~\ref{pr:gluing} there is a morphism $\beta$ from $(\cE,\phi)$ to $(\cE',\phi')$ such that $\Phi(\beta)=\alpha$. Then $\beta$ is also a morphism in $\cA$ from $(\cE,\psi)$ to $(\cE',\psi')$ and $\Psi(\beta)=\alpha$.
\end{proof}

\begin{construction}\label{equally_well}
By Proposition~\ref{pr:gluing2} we can choose a functor quasi-inverse to $\Psi$. Fix such a functor $\Theta$. Let $\Lambda$ be the forgetful functor from $\cA$ to the category of $\bG$-bundles over $\P_U^1$. For $(\cE,\psi)\in\cB$ set
\[
    \Gl(\cE,\psi)=\Lambda(\Theta(\cE,\psi)).
\]
By construction $\Gl(\cE,\psi)$ comes with a prescribed trivialization over $Y^h$.

Conversely, if $\cE$ is a principal $\bG$-bundle over $\P_U^1$ such that its restriction to $Y^h$ is trivial, then $\cE$ can be represented as $\Gl(\cE',\psi)$, where $\cE'=\cE|_{\P_U^1-Y}$, $\psi$ is a trivialization of $\cE'$ on $\dot Y^h$.
\end{construction}

Let $\bu$ be as in Section~\ref{sect:reducing}, $Y_\bu:=Y\times_U\bu$. Let $(Y_\bu^h,\pi^h_\bu,s^h_\bu)$ be the Henselization of $(\A^1_\bu,Y_\bu)$. Using Lemma~\ref{lm:propi}, we get an identification $Y_\bu^h=Y^h\times_U\bu$. Thus we have a closed embedding $Y_\bu^h\to Y^h$. Set $\dot Y_\bu^h=Y_\bu^h-s_\bu(Y_\bu)$. We get a closed embedding $\dot Y_\bu^h\to\dot Y^h$. Thus the pull-back of the Cartesian square (\ref{eq:distinguishedLimit}) by means of the closed embedding $\bu\hookrightarrow U$ has the form
\[
\begin{CD}
\dot Y_\bu^h @>>>Y_\bu^h\\
@VVV @VV in_\bu\circ\pi_\bu^h V\\
\P^1_\bu - Y_\bu @>>>\P^1_\bu,
\end{CD}
\]
where $in_\bu:\A_\bu^1\to\P_\bu^1$ is the standard embedding. Similarly to the above, let $\cA_\bu$ be  the category  of pairs $(\cE_\bu,\psi_\bu)$, where $\cE_\bu$ is a $\bG_\bu$-bundle over $\P_\bu^1$, $\psi_\bu$ is a trivialization of $\cE|_{Y_\bu^h}$. A morphism between $(\cE_\bu,\psi_\bu)$ and $(\cE'_\bu,\psi'_\bu)$ is an isomorphism $\cE_\bu\to\cE'_\bu$ compatible with trivializations. Let $\cB_\bu$ be the category of pairs $(\cE_\bu,\psi_\bu)$, where $\cE_\bu$ is a $\bG_\bu$-bundle over $\P_\bu^1-Y_\bu$, $\psi_\bu$ is a trivialization of $\cE_\bu|_{\dot Y_\bu^h}$. We have an obvious restriction functor $\Psi_\bu:\cA_\bu\to\cB_\bu$ and, similarly to Proposition~\ref{pr:gluing2}, we show that $\Psi_\bu$ is an equivalence of categories.

Next, we have obvious restriction functors $R_\cA:\cA\to\cA_\bu$ and $R_\cB:\cB\to\cB_\bu$ and the diagram
\begin{equation}\label{CD:pullback}
\begin{CD}
\cA @>R_\cA>> \cA_\bu\\
@V \Psi VV @VV \Psi_\bu V\\
\cB @>R_\cB>> \cB_\bu
\end{CD}
\end{equation}
commutes in the sense that the functors $\Psi_\bu\circ R_\cA$ and $R_\cB\circ\Psi$ are isomorphic.

Let $\Theta_\bu$ be a functor quasi-inverse to $\Psi_\bu$ and $\Lambda_\bu$ be the forgetful functor from $\cA_\bu$ to the category of $\bG_\bu$-bundles over $\P_\bu^1$. Let $\cE_\bu$ be a principal $\bG_\bu$-bundle over $\P^1_\bu-Y_\bu$ and $\psi_\bu$ be a trivialization of $\bG_\bu$ on $\dot Y_\bu^h$. Set $\Gl_\bu(\cE_\bu,\psi_\bu)=\Lambda_\bu(\Theta_\bu(\cE_\bu,\psi_\bu))$.

\begin{lemma}\label{basechange_limit}
Let $(\cE,\psi)\in\cB$, and let $\Gl(\cE,\psi)$ be the $\bG$-bundle obtained by Construction~\ref{equally_well}. Then
\[
 \Gl_\bu(\cE|_{\P^1_\bu-Y_\bu},\psi|_{\dot Y_\bu^h}) \ \text{and} \ \Gl(\cE,\psi)|_{\P^1_\bu}
\]
are isomorphic as $\bG_\bu$-bundles over $\P^1_\bu$.
\end{lemma}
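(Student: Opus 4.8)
The plan is to deduce this compatibility statement from the commutativity of diagram~\eqref{CD:pullback} together with the fact that both $\Gl$ and $\Gl_\bu$ are built out of quasi-inverses of the equivalences $\Psi$ and $\Psi_\bu$. First I would unwind the definitions: by Construction~\ref{equally_well}, $\Gl(\cE,\psi)=\Lambda(\Theta(\cE,\psi))$, so $\Gl(\cE,\psi)|_{\P^1_\bu}=R_\bG\bigl(\Lambda(\Theta(\cE,\psi))\bigr)$, where $R_\bG$ denotes restriction of $\bG$-bundles from $\P^1_U$ to $\P^1_\bu$. Since $\Lambda$ and $\Lambda_\bu$ are just the forgetful functors, restriction of bundles and $R_\cA$ are compatible: $R_\bG\circ\Lambda\cong\Lambda_\bu\circ R_\cA$ (forgetting the trivialization and restricting commute on the nose). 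Hence $\Gl(\cE,\psi)|_{\P^1_\bu}\cong\Lambda_\bu\bigl(R_\cA(\Theta(\cE,\psi))\bigr)$.

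Next I would bring in diagram~\eqref{CD:pullback}. The square commutes, meaning $\Psi_\bu\circ R_\cA\cong R_\cB\circ\Psi$. Applying $\Theta_\bu$ on the left and using $\Theta_\bu\circ\Psi_\bu\cong\Id_{\cA_\bu}$, we get $R_\cA\circ\Theta\cong\Theta_\bu\circ R_\cB\circ\Psi\circ\Theta\cong\Theta_\bu\circ R_\cB$ (using $\Psi\circ\Theta\cong\Id_\cB$). Evaluating this natural isomorphism at $(\cE,\psi)\in\cB$ gives $R_\cA(\Theta(\cE,\psi))\cong\Theta_\bu(R_\cB(\cE,\psi))$ in $\cA_\bu$. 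Now $R_\cB(\cE,\psi)$ is by definition the pair $(\cE|_{\P^1_\bu-Y_\bu},\psi|_{\dot Y_\bu^h})$ — here one uses the identifications $Y_\bu^h=Y^h\times_U\bu$ and $\dot Y_\bu^h\hookrightarrow\dot Y^h$ established just before the lemma, so that restricting the trivialization $\psi$ along $\dot Y_\bu^h\hookrightarrow\dot Y^h$ is exactly the $\cB_\bu$-object under $R_\cB$. Applying $\Lambda_\bu$ to both sides:
\[
  \Gl(\cE,\psi)|_{\P^1_\bu}\cong\Lambda_\bu(R_\cA(\Theta(\cE,\psi)))\cong\Lambda_\bu(\Theta_\bu(\cE|_{\P^1_\bu-Y_\bu},\psi|_{\dot Y_\bu^h}))=\Gl_\bu(\cE|_{\P^1_\bu-Y_\bu},\psi|_{\dot Y_\bu^h}),
\]
which is the claimed isomorphism of $\bG_\bu$-bundles over $\P^1_\bu$.

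The main thing to be careful about — and what I expect to be the only real obstacle — is the verification that diagram~\eqref{CD:pullback} genuinely commutes up to coherent isomorphism, i.e. that $\Psi_\bu\circ R_\cA$ and $R_\cB\circ\Psi$ are isomorphic as functors. This is asserted in the text right after the diagram, but to use it cleanly one should check that the base-change identities for henselization (property~(i) of Section~\ref{sect:distinguishedLimit}, giving $Y_\bu^h=Y^h\times_U\bu$ and the compatibility of the canonical sections $s$, $s_\bu$) are exactly what makes the pullback of the cartesian square~\eqref{eq:distinguishedLimit} equal to the $\bu$-version of that square; granting this, restriction commutes with the gluing construction because gluing is defined via that square. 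Everything else is formal manipulation of natural isomorphisms between equivalences and their quasi-inverses, plus the trivial compatibility of the forgetful functors $\Lambda$, $\Lambda_\bu$ with restriction of bundles. I would also remark that the canonical trivializations over $Y^h$ and $Y_\bu^h$ carried by $\Gl(\cE,\psi)$ and $\Gl_\bu(\cdots)$ match under the isomorphism, although the lemma only records the underlying bundle isomorphism and that is all that is needed downstream.
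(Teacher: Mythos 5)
Your proof is correct and follows essentially the same route as the paper's: the paper also reduces the claim to an isomorphism $R_\cA(\Theta(\cE,\psi))\cong\Theta_\bu(R_\cB(\cE,\psi))$ in $\cA_\bu$, and verifies it by applying $\Psi_\bu$ (which reflects isomorphisms) together with the commutativity of diagram~\eqref{CD:pullback} and $\Psi_\bu\circ\Theta_\bu\cong\Id$, rather than first packaging everything into the natural isomorphism $R_\cA\circ\Theta\cong\Theta_\bu\circ R_\cB$ as you do. Your explicit note that $R_\bG\circ\Lambda\cong\Lambda_\bu\circ R_\cA$ is a small verification that the paper leaves implicit; the content is identical.
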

\begin{proof}
By definition of $\Gl$ we have
\[
    \Theta(\cE,\psi)=(\Gl(\cE,\psi),\sigma),
\]
where $\sigma$ is the canonical trivialization of $\Gl(\cE,\psi)$ on $Y^h$. Similarly,
\[
    \Theta_\bu(\cE|_{\P^1_\bu-Y_\bu},\psi|_{\dot Y_\bu^h})=(\Gl_\bu(\cE|_{\P^1_\bu-Y_\bu},\psi|_{\dot Y_\bu^h}),\sigma_\bu),
\]
where $\sigma_\bu$ is the canonical trivialization of $\Gl_\bu(\cE|_{\P^1_\bu-Y_\bu},\psi|_{\dot Y_\bu^h})$ on $Y_\bu^h$. Thus (since $\Psi_\bu$ is an equivalence of categories) it suffices to check that
\[
    \Psi_\bu\bigl(R_\cA(\Theta(\cE,\psi))\bigr)\cong\Psi_\bu(\Theta_\bu(\cE|_{\P^1_\bu-Y_\bu},\psi|_{\dot Y_\bu^h})).
\]
In fact, both sides are isomorphic to $(\cE|_{\P^1_\bu-Y_\bu},\psi|_{\dot Y_\bu^h})$ because diagram~\eqref{CD:pullback} is commutative.
\end{proof}

\begin{lemma}\label{coboundary_limit}
For any $(\cE_\bu,\psi_\bu)\in\cB_\bu$ and any $\beta_\bu\in\bG_\bu(Y_\bu^h)$ the $\bG_\bu$-bundles
\[
 \Gl_\bu(\cE_\bu,\psi_\bu)\ \text{and} \ \Gl_\bu(\cE_\bu,\psi_\bu\circ\beta_\bu|_{\dot Y_\bu^h})
\]
are isomorphic (here $\beta_\bu|_{\dot Y_\bu^h}$ is regarded as an automorphism of the $\bG_\bu$-bundle $\bG_\bu\times_\bu\dot Y_\bu^h$ given by the right translation action).
\end{lemma}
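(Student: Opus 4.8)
The plan is to exhibit the isomorphism directly from the gluing construction, using the fact that $\beta_\bu\in\bG_\bu(Y_\bu^h)$ extends to an automorphism of the trivial bundle on the whole of $Y_\bu^h$, not just on $\dot Y_\bu^h$. Concretely, recall that $\Gl_\bu(\cE_\bu,\psi_\bu)$ is obtained by gluing the trivial bundle $\bG_\bu\times_\bu Y_\bu^h$ with $\cE_\bu$ over $\P^1_\bu-Y_\bu$ via the identification $\psi_\bu$ over the overlap $\dot Y_\bu^h$, and it comes equipped with a canonical trivialization $\sigma_\bu$ over $Y_\bu^h$. The element $\beta_\bu$, regarded as right translation, is an automorphism of $\bG_\bu\times_\bu Y_\bu^h$; I would use it to twist that canonical trivialization.

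First I would spell out, in the language of the categories $\cA_\bu$ and $\cB_\bu$ and the quasi-inverse $\Theta_\bu$, what the two bundles are: we have $\Theta_\bu(\cE_\bu,\psi_\bu)=(\Gl_\bu(\cE_\bu,\psi_\bu),\sigma_\bu)$ and $\Theta_\bu(\cE_\bu,\psi_\bu\circ\beta_\bu|_{\dot Y_\bu^h})=(\Gl_\bu(\cE_\bu,\psi_\bu\circ\beta_\bu|_{\dot Y_\bu^h}),\sigma_\bu')$. Since $\Psi_\bu$ is an equivalence of categories (the analogue of Proposition~\ref{pr:gluing2} for $\bu$), to produce an isomorphism of the underlying $\bG_\bu$-bundles it suffices to produce an isomorphism in $\cB_\bu$ between $(\cE_\bu,\psi_\bu)$ and $(\cE_\bu,\psi_\bu\circ\beta_\bu|_{\dot Y_\bu^h})$ — but wait, those need not be isomorphic as objects of $\cB_\bu$; rather, I want an isomorphism in $\cA_\bu$ between $(\Gl_\bu(\cE_\bu,\psi_\bu),\sigma_\bu)$ and $(\Gl_\bu(\cE_\bu,\psi_\bu\circ\beta_\bu|_{\dot Y_\bu^h}),\sigma_\bu'\circ\beta_\bu)$, i.e. I should twist the trivialization on the $\cA_\bu$ side by $\beta_\bu$. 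So the correct statement is: applying $\Psi_\bu$, the object $(\Gl_\bu(\cE_\bu,\psi_\bu\circ\beta_\bu|_{\dot Y_\bu^h}),\sigma_\bu'\circ\beta_\bu|_{Y_\bu^h})$ restricts over $\dot Y_\bu^h$ to $(\cE_\bu,\ \psi_\bu\circ\beta_\bu|_{\dot Y_\bu^h}\ \text{followed by}\ \beta_\bu^{-1}|_{\dot Y_\bu^h}) = (\cE_\bu,\psi_\bu)$, which equals $\Psi_\bu(\Gl_\bu(\cE_\bu,\psi_\bu),\sigma_\bu)$. Hence the two objects of $\cA_\bu$ are isomorphic, and forgetting the trivializations via $\Lambda_\bu$ gives the desired isomorphism of $\bG_\bu$-bundles.

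The one genuinely content-bearing point — and the place to be careful — is that $\beta_\bu\in\bG_\bu(Y_\bu^h)$ restricts to an element of $\bG_\bu(\dot Y_\bu^h)$ whose associated right translation is compatible with the gluing data on the nose, so that composing trivializations and twisting by $\beta_\bu$ on the ``big'' side of the square cancels the twist by $\beta_\bu|_{\dot Y_\bu^h}$ on the overlap. This is just the observation that the gluing functor $\Gl_\bu$ depends on $\psi_\bu$ only through its composite with automorphisms that extend over $Y_\bu^h$: changing $\psi_\bu$ by such an automorphism changes the glued bundle by an isomorphism. I do not expect any serious obstacle here; the main care is bookkeeping of which side of the distinguished square each automorphism acts on, and making sure the canonical trivializations $\sigma_\bu$ are tracked correctly through $\Theta_\bu$. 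I would therefore keep the proof short, reducing everything to commutativity of the relevant diagrams of restriction functors as in the proof of Lemma~\ref{basechange_limit}, and leave the routine diagram-chase to the reader.
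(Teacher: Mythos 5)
Your proposal is correct and takes essentially the same approach as the paper: compare the two bundles, equipped with suitable trivializations, as objects of $\cA_\bu$, observe both map under $\Psi_\bu$ to $(\cE_\bu,\psi_\bu)$, and conclude via the equivalence plus the forgetful functor $\Lambda_\bu$. One small notational slip: to cancel the twist you should take the trivialization $\sigma_\bu'\circ\beta_\bu^{-1}|_{Y_\bu^h}$ (the paper's $\tau_\bu\circ\beta_\bu^{-1}$), not $\sigma_\bu'\circ\beta_\bu$, which matches the ``followed by $\beta_\bu^{-1}$'' you then correctly use in the computation.
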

\begin{proof}
Denote by $\sigma_\bu$ and $\tau_\bu$ the canonical trivializations on $Y_\bu^h$ of $\Gl_\bu(\cE_\bu, \psi_\bu)$ and $\Gl_\bu(\cE_\bu,\psi_\bu \circ \beta_\bu|_{\dot Y^h_\bu})$ respectively. It is straightforward to check that $(\cE_\bu,\psi_\bu)$
is isomorphic in $\cB_\bu$ to both $\Psi_\bu(\Gl_\bu(\cE_\bu,\psi_\bu),\sigma_\bu)$ and
$\Psi_\bu(\Gl_\bu(\cE_\bu,\psi_\bu\circ\beta_\bu|_{\dot Y^h_\bu}),\tau_\bu\circ\beta_\bu^{-1})$.

Since $\Psi_\bu$ is an equivalence of categories, we conclude that $(\Gl_\bu(\cE_\bu, \psi_\bu), \sigma_\bu)$
and $(\Gl_\bu(\cE_\bu,\psi_\bu\circ\beta_\bu|_{\dot Y^h_\bu}),\tau_\bu\circ\beta_\bu^{-1})$ are isomorphic in $\cA_\bu$. Applying the functor $\Lambda_\bu$, we see that the $\bG_\bu$-bundles $\Gl_\bu(\cE_\bu, \psi_\bu)$ and
$\Gl_\bu(\cE_\bu,\psi_\bu \circ \beta_\bu|_{\dot Y^h_\bu})$ are isomorphic.
\end{proof}

\subsection{Proof of Theorem~\ref{MainThm2}: presentation of $\cG$ in the form $\Gl(\cG',\phi)$}\label{sect:presentation}
Let $U$, $\bG$, $Z$, and $\cG$ be as in Theorem~\ref{MainThm2}. We may assume that $Z\subset\A_U^1$.

\begin{proposition}\label{presentation}
The $\bG$-bundle $\cG$ over $\P^1_U$ is of the form $\Gl(\cG',\phi)$ for the $\bG$-bundle $\cG':=\cG|_{\P^1_U-Y}$ and a
trivialization $\phi$ of $\cG'$ over $\dot Y^h$.
\end{proposition}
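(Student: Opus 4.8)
The goal is to show that the $\bG$-bundle $\cG$ over $\P^1_U$ can be written as $\Gl(\cG',\phi)$ where $\cG'=\cG|_{\P^1_U-Y}$. By Construction~\ref{equally_well}, it suffices to exhibit a trivialization of $\cG$ on $Y^h$; once we have that, $\cG$ is automatically of the form $\Gl(\cG|_{\P^1_U-Y},\psi)$ for the induced trivialization $\psi$ of $\cG'$ on $\dot Y^h$. So the entire content of the proposition is: \emph{the restriction of $\cG$ to $Y^h$ is a trivial $\bG$-bundle.}

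The plan is to produce this trivialization by a henselian lifting argument. First I would recall that $Y$ is finite \'etale over $U$, hence $Y$ is semi-local (it is finite over the semi-local scheme $U$), and its closed points all lie over the closed points $\bu$ of $U$; in fact $s(Y)\cong Y$ sits inside $Y^h$ as the closed subscheme cut out by $\pi^{-1}(Y)$, and $(Y^h, s(Y))$ is a henselian pair by the construction in Section~\ref{sect:distinguishedLimit}. Next I would observe that $\cG|_{s(Y)}$, being a $\bG_Y$-bundle over $Y$, is trivial: indeed $\cG$ is trivial over $\P^1_U - Z$ by hypothesis, and $Y\cap Z=\emptyset$, so $Y = s(Y) \subset \P^1_U - Z$ and the restriction of $\cG$ to $Y$ is already trivial. (Here I identify $Y$ with its image under the closed embedding $Y\hookrightarrow\P^1_U$ and use that $s:Y\to Y^h$ followed by $\pi$ is this inclusion.)

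Now the key step: the structure morphism $\cG|_{Y^h}\to Y^h$ is an affine smooth morphism (it is a torsor under the smooth affine group scheme $\bG$ pulled back to $Y^h$, so in particular it is smooth, and it is affine since $\bG$ is affine and $\cG$ is affine over $\P^1_U$). A trivialization of $\cG|_{Y^h}$ is the same as a section of this morphism. We already have a section over the closed subscheme $s(Y)$, coming from the trivialization of $\cG|_Y$ established above. Since $(Y^h, s(Y))$ is a henselian pair and $\cG|_{Y^h}\to Y^h$ is smooth (hence formally smooth, hence the relevant lifting/approximation property for henselian pairs applies — this is the standard statement that sections of smooth morphisms lift along henselian pairs, e.g.\ \cite[Section~0]{Gabber} or \cite[Ch.~XI]{LNM146}), this section extends to a section over all of $Y^h$. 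That section is the desired trivialization $\psi$ of $\cG|_{Y^h}$, and applying the reverse direction of Construction~\ref{equally_well} yields the presentation $\cG\cong\Gl(\cG',\phi)$ with $\phi = \psi|_{\dot Y^h}$.

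The main obstacle — really the only non-formal point — is justifying that a section of the smooth affine morphism $\cG|_{Y^h}\to Y^h$ over the closed subscheme $s(Y)$ extends over the whole henselian pair. One must be a little careful: the henselian property as stated in Section~\ref{sect:distinguishedLimit} is about extending sections of affine \'etale morphisms, whereas $\cG|_{Y^h}\to Y^h$ is only smooth, not \'etale. The standard fix is to first use that $Y^h = \varprojlim W'$ over $\Nneib(\A^1_U,Y)$, descend the pair $(\cG', \text{section over } s(Y))$ to some finite level $W'$, and then on $W'$ factor the smooth morphism locally through an \'etale morphism to affine space over $W'$ to reduce to the \'etale case — or simply cite the general fact that smooth morphisms have the section-lifting property over henselian pairs. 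Alternatively one can invoke that $\cG\to\P^1_U$ is smooth, restrict to a neighborhood $W'$ of $Y$ as above, and use Hensel's lemma in the form applicable to smooth schemes over a henselian pair. Either route is routine but should be spelled out, since it is the one place where the henselian hypothesis is actually used.
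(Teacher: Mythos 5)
Your argument is valid in outline but goes through a needlessly heavy step, and the one nontrivial point you flag at the end is precisely the gap the paper's proof is designed to avoid. You reduce the proposition to lifting a section of the smooth morphism $\cG|_{Y^h}\to Y^h$ from $s(Y)$ to all of $Y^h$. The henselian property of the pair $(Y^h,s(Y))$, as set up in Section~\ref{sect:distinguishedLimit}, only gives lifting along affine \emph{\'etale} morphisms, and extending this to smooth morphisms (via Elkik's theorem, or by locally factoring the smooth morphism through an \'etale one after descending to a finite level of the limit) is a genuine theorem that would need a careful citation or a real argument; your two suggested routes are plausible but not spelled out, and neither is as quick as you imply. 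The paper sidesteps all of this: choose a closed $Z'\supset Z$ in $\A^1_U$ disjoint from $Y$ with $V:=\A^1_U-Z'$ affine (possible since $U$ is semi-local and $Z,Y$ are finite over $U$ and disjoint). Then $V$ is an affine open neighborhood of $Y$, so the henselization of $(\A^1_U,Y)$ coincides with that of $(V,Y)$, i.e.\ the structure map $\pi:Y^h\to\A^1_U$ factors through $V$. Since $V\subset\P^1_U-Z$ and $\cG$ is trivial over $\P^1_U-Z$, the pull-back $(in\circ\pi)^*\cG$ is trivial by direct base change from $V$ — no lifting across the henselian pair is needed at all. Your preliminary observations (that $Y\subset\P^1_U-Z$, that $\cG|_Y$ is trivial, that $(Y^h,s(Y))$ is henselian) are all correct and would become relevant if one really had to lift, but the cofinality trick renders the whole lifting machinery unnecessary.
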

\begin{proof}
In view of Construction~\ref{equally_well}, it is enough to prove that the restriction of the principal $\bG$-bundle $\cG$ to $Y^h$ is trivial. Let us choose a closed subscheme $Z'\subset\A^1_U$ such that $Z'$ contains $Z$, $Z'\cap Y=\emptyset$, and $\A^1_U-Z'$ is affine. Then $\A^1_U-Z'$ is an affine neighborhood of $Y$. Thus, the Henselization of the pair $(\A^1_U-Z',Y)$ coincides with the Henselization of the pair $(\A^1_U,Y)$. Since $\cG$ is trivial over $\A^1_U-Z'$, its pull-back to $Y^h$ is trivial too. The proposition is proved.
\end{proof}

\emph{Our aim is to modify the trivialization $\phi$ via an element
\[
   \alpha\in\bG(\dot Y^h)
\]
so that the $\bG$-bundle $\Gl(\cG',\phi\circ\alpha)$ becomes trivial over} $\P^1_U$.

\subsection{Principal bundles over open subsets of projective lines} We will recall some results from~\cite{GilleTorseurs}.
In this section $k$ denotes any field, $V$ denotes an open subscheme of $\P^1_k$, $G$ is a connected reductive group over $k$.

\begin{lemma}\label{lm:Gille}
\stepzero\noindstep\label{lm:Gille1}
A $G$-bundle over $V$ is locally trivial in the Zariski topology on $V$ if it is trivial at the generic point of $V$;

\noindstep\label{lm:Gille2}
Let $T$ be a maximal split torus of $G$, let $\hat T$ be its lattice of co-characters, and let $\Pic(V)$ denotes the group of isomorphism classes of line bundles over $V$. Then there is a natural surjection
\[
    \hat T\otimes_\Z\Pic(V)\to H^1_{\text{Zar}}(V,G).
\]
(Here $H^1_{\text{Zar}}$ stands for the set of isomorphism classes of Zariski locally trivial $G$-bundles.)
\end{lemma}
\begin{proof}
It is a reformulation of~\cite[Cor.~3.10(a)]{GilleTorseurs}, see also~\cite{GilleErratum}.
\end{proof}
Note that part~\eqref{lm:Gille1} of the lemma is a particular case of the Grothendieck--Serre conjecture. Note also, that the map in part~\eqref{lm:Gille2} is given as follows: given a co-character of $T$, we get a homomorphism $\mathbb G_{m,k}\to G$. Then every line bundle over $V$ yields a principal $G$-bundle via pushforward.

\subsection{Proof of Theorem~\ref{MainThm2}: proof of property~(i) from the outline}\label{sect:properties_i}
Now we are able to prove property (i) from the outline of the proof. In fact, we will prove the following
\begin{lemma}\label{podpravka}
Let $\Gl(\cG',\phi)$ be the presentation of the $\bG$-bundle $\cG$ over~$\P^1_U$ given in Proposition~\ref{presentation}. Set $\phi_\bu:=\phi|_{\dot Y_\bu^h}$. Then there is $\gamma_\bu\in\bG_\bu(\dot Y_\bu^h)$ such that the $\bG_\bu$-bundle $\Gl_\bu(\cG'|_{\P^1_\bu-Y_\bu},\phi_\bu\circ\gamma_\bu)$ is trivial.
\end{lemma}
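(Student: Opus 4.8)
The plan is to reduce the statement to a purely fiberwise question over $\bu$ and then apply what the outline already tells us about bundles over $\P^1_\bu-Y_\bu$. First I would observe that by Lemma~\ref{basechange_limit} the $\bG_\bu$-bundle $\Gl_\bu(\cG'|_{\P^1_\bu-Y_\bu},\phi_\bu)$ (with $\phi_\bu:=\phi|_{\dot Y_\bu^h}$, which agrees with the notation $\psi|_{\dot Y_\bu^h}$ of that lemma) is isomorphic to $\cG|_{\P^1_\bu}$. So the task is to find $\gamma_\bu\in\bG_\bu(\dot Y_\bu^h)$ making this bundle trivial after twisting the gluing trivialization by $\gamma_\bu$ on $\dot Y_\bu^h$. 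Since $\bu\cong\coprod_i\spec k(u_i)$, property~(ii) of henselization ($Y^h$ of a disjoint union is the disjoint union of henselizations, and likewise for $\dot Y^h$ and for the categories $\cA_\bu,\cB_\bu$) lets me treat each closed point $u=u_i$ separately and then take the obvious disjoint-union $\gamma_\bu=\coprod_i\gamma_{u_i}$. So fix a closed point $u\in\bu$.

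Next I would use the computation of property~(i) from the outline: the $\bG_u$-bundle $\cG_u:=\cG|_{\P^1_u}$ is trivial over $\P^1_u-Y_u$, where $Y_u:=\P^1_u\cap Y$ (by \cite[Corollary~3.10(a)]{GilleTorseurs}: if $u\in\bu'$ there is a $k(u)$-rational point of $Y_u$ and $\bG_u$ is isotropic, while if $u\in\bu''$ the group $\bG_u$ is anisotropic and $\cG_u$ is already trivial over all of $\P^1_u$). Thus $\cG'|_{\P^1_u-Y_u}$ admits a trivialization. Now I would compare two trivializations of $\cG'|_{\P^1_u-Y_u}$ on $\dot Y_u^h$: one is the restriction of a global trivialization over $\P^1_u-Y_u$ just produced, the other is $\phi_u:=\phi|_{\dot Y_u^h}$; their ``ratio'' is precisely an element $\gamma_u\in\bG_u(\dot Y_u^h)$ such that $\phi_u\circ\gamma_u$ extends to a trivialization of $\cG'|_{\P^1_u-Y_u}$ over all of $\P^1_u-Y_u$. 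For such a $\gamma_u$, the pair $(\cG'|_{\P^1_u-Y_u},\phi_u\circ\gamma_u)$ lies in $\cB_u$ and is isomorphic to $(\bG_u\times_u(\P^1_u-Y_u),\text{restriction of the global trivialization})$; applying $\Theta_u$ and then $\Lambda_u$, i.e. by the very definition of $\Gl_u$ together with Proposition~\ref{pr:gluing2} applied over $u$, we get that $\Gl_u(\cG'|_{\P^1_u-Y_u},\phi_u\circ\gamma_u)$ is isomorphic to $\Gl_u$ of a trivial bundle with the trivial gluing data, which is the trivial $\bG_u$-bundle over $\P^1_u$.

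Finally I would assemble the pieces: set $\gamma_\bu=\coprod_{u\in\bu}\gamma_u\in\bG_\bu(\dot Y_\bu^h)=\prod_u\bG_u(\dot Y_u^h)$. By property~(ii) of henselization and the corresponding product decomposition of the categories, $\Gl_\bu(\cG'|_{\P^1_\bu-Y_\bu},\phi_\bu\circ\gamma_\bu)=\coprod_u\Gl_u(\cG'|_{\P^1_u-Y_u},\phi_u\circ\gamma_u)$, which is trivial on each component, hence trivial over $\P^1_\bu$. The one place needing a little care — and the main technical point — is the claim that the ``ratio'' of two trivializations of $\cG'|_{\P^1_u-Y_u}$ on $\dot Y_u^h$ is a well-defined element of $\bG_u(\dot Y_u^h)$ and that twisting by it has the stated effect on the gluing construction; this is where one uses that $\dot Y_u^h$ is affine (Proposition~\ref{pr:affine}\eqref{c}, base-changed to $u$) so that $\Gl_u$ is genuinely governed by the distinguished-square descent of Proposition~\ref{pr:gluing2}, and that for a trivial bundle $\bG_u\times_u T$ the automorphisms over a base $T$ are exactly $\bG_u(T)$ acting by right translations. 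No further obstacle is expected; everything else is a formal consequence of the equivalences of categories already established.
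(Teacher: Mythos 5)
Your proposal is correct and takes essentially the same route as the paper: establish that $\cG'|_{\P^1_\bu-Y_\bu}$ is trivial (pointwise, via \cite[Corollary~3.10(a)]{GilleTorseurs}, splitting $\bu$ into the isotropic and anisotropic loci), then take $\gamma_\bu$ to be the ``ratio'' undoing $\phi_\bu$ relative to a chosen global trivialization, so that the glued bundle becomes trivial. The paper states this last step a bit more tersely (identify $\phi_\bu$ with an element of $\bG_\bu(\dot Y_\bu^h)$ and set $\gamma_\bu=\phi_\bu^{-1}$), but your component-by-component treatment and categorical justification via $\cB_u$ and $\Theta_u$ are just a more explicit unwinding of the same argument.
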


\begin{proof}
We show first that $\cG|_{\P^1_\bu-Y_\bu}$ is trivial. Recall that $\bu'\subset\bu$ is the subscheme of all closed points $u_i$ such that the group $\bG_{u_i}$ is isotropic, and $\bu''=\bu-\bu'$. We can write
\[
    \P^1_\bu= \left(\coprod_{u\in\bu'}\P^1_u\right)\coprod\left(\coprod_{u\in\bu''}\P^1_u\right).
\]
For $u\in\bu$ set $Y_u:=Y\times_Uu$, $\bG_u:=\bG\times_Uu$, and $\cG_u:=\cG\times_Uu$.

For $u\in\bu''$ the algebraic $k(u)$-group $\bG_u$ is anisotropic. Since $\cG_u$ is trivial over an open subset of $\P^1_u$, Lemma~\ref{lm:Gille}\eqref{lm:Gille1} shows that $\cG_u$ is locally trivial in the Zariski topology. Now Lemma~\ref{lm:Gille}\eqref{lm:Gille2} shows that $\cG_u$ is trivial. Thus $\cG|_{\P^1_u-Y_u}$ is trivial.

Take $u\in\bu'$. By our assumption on $Y$, there is a $k(u)$-rational point $p_u\in Y_u$. Set $\A_u^1=\P_u^1-p_u$. Then we can write $Y_u=p_u\coprod T_u$ and $\P^1_u-Y_u\cong\A^1_u-T_u$. The $\bG_u$-bundle $\cG_u$ is trivial over $\A^1_u-Z$. Thus, again by Lemma~\ref{lm:Gille}, it is trivial over $\A^1_u$. Whence it is trivial over $\P^1_u-Y_u$.

We see that $\cG'|_{\P^1_\bu-Y_\bu}=\cG|_{\P^1_\bu-Y_\bu}$ is trivial. Choosing a trivialization, we may identify $\phi_\bu$ with an element of $\bG_\bu(\dot Y_\bu^h)$. Set $\gamma_\bu=\phi_\bu^{-1}$. By the very choice of~$\gamma_\bu$ the $\bG_\bu$-bundle $\Gl_\bu(\cG'|_{\P^1_\bu-Y_\bu},\phi_\bu\circ\gamma_\bu)$ is trivial.
\end{proof}

\subsection{Proof of Theorem~\ref{MainThm2}: reduction to property (ii) from the outline}\label{sect:properties_ii}
The aim of this section is to deduce Theorem~\ref{MainThm2} from the following
\begin{proposition}\label{alpha}
Each element $\gamma_\bu\in\bG_\bu(\dot Y_\bu^h)$ can be written in the form
\[
 \alpha|_{\dot Y_\bu^h}\cdot\beta_\bu|_{\dot Y_\bu^h}
\]
for certain elements $\alpha\in\bG(\dot Y^h)$ and $\beta_\bu\in\bG_\bu(Y_\bu^h)$.
\end{proposition}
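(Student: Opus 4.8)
The plan is to realize the decomposition $\gamma_\bu = \alpha|_{\dot Y_\bu^h}\cdot\beta_\bu|_{\dot Y_\bu^h}$ by working component by component over the points of $\bu$, using the structure of $\dot Y^h$ as a finite disjoint union of henselizations of curves and then appealing to the Bruhat-type factorization over a henselian discrete valuation ring. First I would reduce to the case where $\bu$ is a single point $u$ and $Y$ is connected: since $Y$ is finite \'etale over $U$ and $\bG_Y$ is isotropic on each component, and since $Y^h=\coprod_i T_i^h$ decomposes according to the connected components $T_i$ of $Y$ (by property (ii) of henselization in Section~\ref{sect:distinguishedLimit}), it suffices to treat each $T_i^h$ and the corresponding fiber over each $u_i\in\bu'$ separately; over points $u\in\bu''$ there is nothing to do because $\bG_u$ need not be isotropic there — but note $Y_\bu$ only sees the components of $Y$, and the parabolic subgroup scheme $\bP$ over $Y$ giving isotropy restricts to each $Y_u$, so the elementary subgroup is defined uniformly over $Y$.

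Next I would identify the local rings involved. For a connected component $T_i$ of $Y$, the scheme $T_i$ is $\spec A_i$ for a semi-local Dedekind-like ring finite \'etale over $R$; its henselization $T_i^h$ inside $\A^1_U$ is $\spec (A_i)^h$, where $(A_i)^h$ is a semi-local henselian ring whose completion along the ideal of $s(T_i)$ is a product of complete local rings, each a henselian (even complete) discrete valuation ring — here one uses that $Y$ is a principal divisor in $\A^1_U$ (Lemma~\ref{lm:affine}), so $s(T_i)$ is cut out by a single equation and $(A_i)^h$ is one-dimensional regular at those maximal ideals. Then $\dot Y^h$, component-wise, is $\spec$ of the localization of $(A_i)^h$ inverting that equation, i.e. the "punctured" henselian scheme, and its ring is a product of fraction fields $L_v$ of those henselian DVRs $\cO_v$. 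Over the closed points $u\in\bu'$ the same description holds for $Y_\bu^h$ and $\dot Y_\bu^h$: the fiber over $u$ contributes those residue-field analogues, and the embedding $\dot Y_\bu^h\hookrightarrow\dot Y^h$ is a closed embedding of affine schemes, so $\bG(\dot Y^h)\to\bG_\bu(\dot Y_\bu^h)$ is the map on $\bG$-points induced by a surjection of rings.

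With this dictionary, I would invoke \cite[Fait~4.3, Lemma~4.5]{Gille:BourbakiTalk}: since $\bG$ is simple simply-connected and $\bP$ is a parabolic subgroup scheme over $Y$ (hence over each $\cO_v$), every element of $\bG(L_v)$ factors as a product of an element of the elementary subgroup $\bE(L_v)$ and an element of $\bG(\cO_v)$. Applied to the image $\bar\gamma_v$ of $\gamma_\bu$ in each $\bG(L_v)$ appearing in $\bG_\bu(\dot Y_\bu^h)$, this produces $\alpha_v\in\bE(L_v)$ and $\beta_v\in\bG(\cO_v)$ with $\bar\gamma_v=\alpha_v\beta_v$; assembling over the finitely many $v$ lying over each $u\in\bu'$ (and taking the identity element over the uninvolved components) yields $\gamma_\bu=\alpha_\bu\cdot\beta_\bu$ with $\alpha_\bu\in\bE(\dot Y_\bu^h)$ and $\beta_\bu\in\bG_\bu(Y_\bu^h)$. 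The final — and I expect the only genuinely delicate — step is to lift $\alpha_\bu$ from $\bE(\dot Y_\bu^h)$ to $\bE(\dot Y^h)$. Here the point is that the elementary subgroup $\bE$ is generated by the unipotent radicals $\bU_\bP$ and $\bU_{\bP^-}$ of $\bP$ and an opposite parabolic $\bP^-$ (these exist locally, and globally after possibly refining, since $Y^h$ is affine and one can choose opposite parabolics over the henselian pair), and each $\bU_{\bP^{\pm}}$ is a vector group scheme over $Y$. Thus lifting an element of $\bE(\dot Y_\bu^h)$ to $\bE(\dot Y^h)$ reduces to lifting a tuple of sections of affine space: concretely, a section of $\bU_\bP$ over $\dot Y_\bu^h$ is a tuple of functions on the closed affine subscheme $\dot Y_\bu^h\subset\dot Y^h$, and since $\dot Y^h$ is affine, these functions extend to $\dot Y^h$; multiplying together the (finitely many) lifted generators in the same order as in a fixed word expressing $\alpha_\bu$ gives the required $\alpha\in\bE(\dot Y^h)\subset\bG(\dot Y^h)$, which restricts to $\alpha_\bu$ on $\dot Y_\bu^h$. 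This completes the factorization $\gamma_\bu=\alpha|_{\dot Y_\bu^h}\cdot\beta_\bu|_{\dot Y_\bu^h}$.
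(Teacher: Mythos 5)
Your proposal follows the paper's argument almost verbatim: (1) factor the image of $\gamma_\bu$ over each connected component of $\dot Y_\bu^h$ — which is $\spec L$ for $L$ the fraction field of a henselian DVR — using Gille's Fait~4.3 and Lemma~4.5, obtaining $\gamma_\bu=\alpha_\bu\beta_\bu$ with $\alpha_\bu\in\bE(\dot Y_\bu^h)$ and $\beta_\bu\in\bG_\bu(Y_\bu^h)$; then (2) lift $\alpha_\bu$ to $\bE(\dot Y^h)$ via the surjectivity of $\bU^\pm(\dot Y^h)\to\bU^\pm(\dot Y_\bu^h)$, which holds because $\bU^\pm$ are vector group schemes and $\dot Y_\bu^h$ is a closed subscheme of the affine scheme $\dot Y^h$. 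This is precisely the paper's Lemma~\ref{nastia} followed by Lemma~\ref{lm:surjectivity}.

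Two small inaccuracies are worth noting, neither of which damages the argument. First, your description of the ring of $\dot Y^h$ as ``a product of fraction fields $L_v$ of henselian DVRs'' is false: $Y^h$ has the same dimension as $\A^1_U$, which is $\dim U+1$, so $\dot Y^h$ is a genuinely higher-dimensional affine scheme; the DVR description is correct only for the fibers $\dot y^h$ over the closed points $u\in\bu$, which is where you actually apply Gille. Second, to make sense of $\bU^\pm(\dot Y^h)$ when $\bU^\pm$ are group schemes over $Y$, one must exhibit $\dot Y^h$ as a $Y$-scheme; your phrase ``one can choose opposite parabolics over the henselian pair'' gestures at this but doesn't spell it out — the paper does so cleanly by constructing a retraction $r:Y^h\to Y$ (Lemma~\ref{lm:retraction}) from the canonical morphism $Y^h\to\A^1_Y$ followed by projection to $Y$. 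Finally, be careful with your phrase ``taking the identity element over the uninvolved components'': since $\bG_Y$ is required to be isotropic on every connected component of $Y$, the elementary subgroup and Gille's factorization are available over all of $Y_\bu$, including fibers over $\bu''$, so there are in fact no uninvolved components.
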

\begin{proof}[Deduction of Theorem~\ref{MainThm2} from Proposition~\ref{alpha}]
Let $\Gl(\cG',\phi)$ be the presentation of the $\bG$-bundle $\cG$ from Proposition~\ref{presentation}. Let $\gamma_\bu\in\bG_\bu(\dot Y_\bu^h)$ be the element from Lemma~\ref{podpravka}. Let $\alpha\in\bG(\dot Y^h)$ and $\beta_\bu\in\bG_\bu(Y_\bu^h)$ be the elements from Proposition~\ref{alpha}. Set
\[
 \cG^{new}=\Gl(\cG',\phi\circ\alpha).
\]
\emph{Claim.} The $\bG$-bundle $\cG^{new}$ is trivial over $\P^1_U$.\\ Indeed, by Lemmas~\ref{basechange_limit} and~\ref{coboundary_limit} one has a chain of isomorphisms of $\bG_\bu$-bundles
\begin{multline*}
  \cG^{new}|_{\P^1_\bu}\cong
  \Gl_\bu(\cG'|_{\P^1_\bu-Y_\bu},\phi_\bu \circ \alpha|_{\dot Y_\bu^h})
\cong\\
\Gl_\bu(\cG'|_{\P^1_\bu-Y_\bu},\phi_\bu \circ
\alpha|_{\dot Y_\bu^h}\circ\beta_\bu|_{\dot Y_\bu^h})=
\Gl_\bu(\cG'|_{\P^1_\bu-Y_\bu}, \phi_\bu \circ \gamma_\bu);
\end{multline*}
the bundle $\Gl_\bu(\cG'|_{\P^1_\bu-Y_\bu}, \phi_\bu \circ \gamma_\bu)$ is trivial by the choice of $\gamma_\bu$. The $\bG$-bundles $\cG|_{\P^1_U-Y}$ and $\cG^{new}|_{\P^1_U-Y}$ coincide by the very construction of~$\cG^{new}$. By Proposition~\ref{pr:trivclsdfbr} applied to $T=Z\cup Y$ the $\bG$-bundle $\cG^{new}$ is trivial. Whence the claim.

The claim above implies that the $\bG$-bundle $\cG|_{\P^1_U-Y}$ is trivial. Theorem~\ref{MainThm2} is proved.
\end{proof}

\subsection{End of proof of Theorem~\ref{MainThm2}: proof of property (ii) from the outline}
\emph{In the remaining part of Section~\ref{sect:proof2} we will prove Proposition~\ref{alpha}. This will complete the proof of Theorem~\ref{MainThm2}}.

By assumption, the group scheme $\bG_Y=\bG\times_UY$ is isotropic. Thus we may choose a parabolic subgroup scheme $\bP^+$ in $\bG_Y$ such that the restriction of $\bP^+$ to each connected component of $Y$ is a proper subgroup scheme in the restriction of $\bG_Y$ to this component of $Y$.

Since $Y$ is an affine scheme, by~\cite[Exp.~XXVI, Cor.~2.3, Thm.~4.3.2(a)]{SGA3-3} there is an opposite to $\bP^+$ parabolic subgroup scheme $\bP^-$ in $\bG_Y$. Let $\bU^+$ be the unipotent radical of $\bP^+$, and let $\bU^-$ be the unipotent radical of $\bP^-$.

\begin{definition}\label{EYi}
If $T$ is a $Y$-scheme, we write $\bE(T)$ for the subgroup of $\bG_Y(T)=\bG(T)$ generated by the unipotent subgroups $\bU^+(T)$ and $\bU^-(T)$. Thus $\bE$ is a functor from the category of $Y$-schemes to the category of groups.
\end{definition}

\begin{lemma}\label{lm:surjectivity}
The functor $\bE$ has the property that for every closed subscheme $S$ in an affine $Y$-scheme $T$ the induced map $\bE(T)\to\bE(S)$ is surjective.
\end{lemma}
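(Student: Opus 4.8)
The plan is to bootstrap the statement for $\bE$ from the (essentially trivial) corresponding statement for the two unipotent radicals $\bU^{+}$ and $\bU^{-}$ separately. First I would record the purely formal reduction: by Definition~\ref{EYi} every element of $\bE(S)$ is a finite product $w_{1}w_{2}\cdots w_{r}$ in which each factor $w_{j}$ lies in $\bU^{+}(S)$ or in $\bU^{-}(S)$ (inverses cause no trouble, since $\bU^{\pm}(S)$ are groups). Hence it suffices to prove that each restriction map $\bU^{\pm}(T)\to\bU^{\pm}(S)$ is surjective: choosing for every $j$ a preimage $\tilde w_{j}\in\bU^{\varepsilon_{j}}(T)$ of the same sign $\varepsilon_{j}$, the product $\tilde w_{1}\cdots\tilde w_{r}$ lies in $\bE(T)$ and restricts to $w_{1}\cdots w_{r}$.

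The real content is therefore the surjectivity of $\bU^{\pm}(T)\to\bU^{\pm}(S)$, and here I would use two inputs. Since $Y$ is finite over the semi-local scheme $U$, it is itself semi-local, so finitely generated projective $\mathcal O(Y)$-modules are free. And, by the structure theory of the unipotent radical of a parabolic subgroup scheme (\cite[Exp.~XXVI, \S2]{SGA3-3}), $\bU^{\pm}$ admits a finite filtration $\bU^{\pm}=\bU_{1}\supseteq\bU_{2}\supseteq\cdots\supseteq\bU_{m+1}=\{1\}$ by closed $Y$-subgroup schemes, each normal in $\bU^{\pm}$, with successive quotients $\bU_{i}/\bU_{i+1}$ isomorphic to vector group schemes $W(\mathcal E_{i})$ for finite locally free $\mathcal O_{Y}$-modules $\mathcal E_{i}$; over the semi-local $Y$ these become $\mathbb G_{a,Y}^{d_{i}}$. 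I would then peel off the tower over the affine base $Y$: each $\bU_{i}\to\bU_{i}/\bU_{i+1}$ is a torsor under the split unipotent group $\bU_{i+1}$ over an affine base, hence trivial because Zariski (equivalently étale) cohomology of quasi-coherent sheaves on an affine scheme vanishes and $\bU_{i+1}$ is itself a successive extension of such sheaves; this yields a $Y$-scheme isomorphism $\bU^{\pm}\cong\A^{N}_{Y}$ with $N=\sum_{i}d_{i}$. Consequently, for $T=\spec B$ affine and $S=\spec(B/I)$ closed in $T$, the restriction map on sections becomes $\bU^{\pm}(T)=B^{N}\to(B/I)^{N}=\bU^{\pm}(S)$, which is plainly surjective. (One may equally argue without naming an isomorphism with affine space, by lifting $S$-points to $T$-points one graded piece $\bU_{i}/\bU_{i+1}$ at a time, the obstruction at the $i$-th stage living in $H^{1}_{\mathrm{Zar}}(T,\bU_{i+1})=\{\ast\}$ and the base case being the evident surjection $\mathcal O(T)^{d}\twoheadrightarrow\mathcal O(S)^{d}$.)

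The one non-formal ingredient — and the step I expect to require care — is the passage to affine-space structure in the second paragraph: that the unipotent radical of a parabolic is a \emph{trivial} iterated vector-group bundle over $Y$. This is exactly where the hypothesis that $Y$ is finite over a semi-local ring enters (to trivialize the bundles $W(\mathcal E_{i})$), together with the vanishing of quasi-coherent cohomology on affines (to split the extensions) and the SGA3 structure theory of parabolics. Granting this, the word-lifting argument of the first paragraph finishes the proof, and in particular $\bE(T)\to\bE(S)$ is surjective as claimed.
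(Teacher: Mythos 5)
Your overall strategy is the same as the paper's: reduce surjectivity of $\bE(T)\to\bE(S)$ to surjectivity of $\bU^{\pm}(T)\to\bU^{\pm}(S)$ by lifting a word in $\bU^{\pm}(S)$ letter by letter, and then exploit the linear structure of the unipotent radicals. Where you diverge is in the second step. The paper simply cites \cite[Exp.~XXVI, Cor.~2.5]{SGA3-3}, which (over the affine base $Y$) gives a $Y$-scheme isomorphism $\bU^{\pm}\cong W(\mathcal E)$ for a finite locally free $\mathcal O_{Y}$-module $\mathcal E$; surjectivity of $\bU^{\pm}(T)\to\bU^{\pm}(S)$ is then immediate, because for $T=\spec B$ affine and $S=\spec(B/I)$ one has $W(\mathcal E)(T)=M\twoheadrightarrow M/IM=W(\mathcal E)(S)$ for the $B$-module $M=\Gamma(T,f^{*}\mathcal E)$. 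You instead reconstruct this from the filtration of $\bU^{\pm}$ by vector-group quotients, split the extensions over the affine base, and moreover trivialize the resulting vector bundles using semi-locality of $Y$. That all works, but the final trivialization step is unnecessary: the surjectivity argument just sketched does not care whether $W(\mathcal E)$ is a trivial bundle, so invoking semi-locality of $Y$ to get $\bU^{\pm}\cong\A^{N}_{Y}$ buys you nothing here. Your parenthetical inductive variant (lifting one graded piece at a time and killing the obstruction in $H^{1}_{\mathrm{Zar}}(T,\bU_{i+1})=\{\ast\}$) is the cleaner version of your route and avoids the spurious appeal to semi-locality; still, the paper's one-line citation of Cor.~2.5 is the most economical way to the same conclusion.
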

\begin{proof}
The restriction maps $\bU^\pm(T)\to\bU^\pm(S)$ are surjective, since $\bU^\pm$ are isomorphic to vector bundles as $Y$-schemes
(see~\cite[Exp.~XXVI, Cor.~2.5]{SGA3-3}).
\end{proof}

Recall that $(Y^h,\pi^h,s^h)$ is the Henselization of the pair $(\A_U^1,Y)$. Recall that $in:\A_U^1\to\P_U^1$ is the standard embedding. Denote the projection $\A_U^1\to U$ by $pr$ and the projection $\A_Y^1\to Y$ by $pr_Y$.
\begin{lemma}\label{lm:retraction}
There is a morphism $r:Y^h\to Y$ making the following diagram commutative
\begin{equation}\label{eq:retraction}
\begin{CD}
Y^h @>r>> Y\\
@V{in\circ\pi^h}VV @VV{pr|_{Y}}V\\
\P^1_U @>pr>> U
\end{CD}
\end{equation}
and such that $r\circ s^h=\Id_Y$.
\end{lemma}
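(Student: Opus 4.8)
The plan is to construct $r$ as a projection of a lift obtained from the henselian property of the pair $(Y^h,s(Y))$. First I would consider the $Y^h$-scheme
\[
  Q:= Y\times_U Y^h,
\]
where $Y\to U$ is the structure morphism and $Y^h\to U$ is the composite $pr\circ in\circ\pi$; denote by $q_1:Q\to Y$ and $q_2:Q\to Y^h$ the two projections. Since $Y$ is \'etale over $U$, the morphism $q_2$ is \'etale, being a base change of $Y\to U$; and since $Y$, $U$, and $Y^h$ are all affine, $Q$ is affine. Hence $q_2:Q\to Y^h$ is an affine \'etale morphism.

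Next I would exhibit a section of $q_2$ over the closed subscheme $s(Y)\subset Y^h$. Because $\pi\circ s$ is the inclusion $Y\hookrightarrow\A^1_U$, the composite $Y\xrightarrow{s}Y^h\to U$ equals the structure morphism of $Y$; therefore $(\Id_Y,s):Y\to Y\times_U Y^h=Q$ is a well-defined morphism with $q_2\circ(\Id_Y,s)=s$, that is, it is a section, call it $\sigma$, of $q_2$ over $s(Y)$. Since $(Y^h,s(Y))$ is a henselian pair (Section~\ref{sect:distinguishedLimit}) and $q_2$ is affine and \'etale, $\sigma$ extends uniquely to a section $\tilde s:Y^h\to Q$ of $q_2$. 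I then set $r:=q_1\circ\tilde s:Y^h\to Y$.

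It remains to verify the two assertions, both of which are formal. Because $\tilde s$ is a section of $q_2$, we have $\tilde s=(r,\Id_{Y^h})$ as a morphism into $Q=Y\times_U Y^h$, so the universal property of the fibre product gives $pr|_Y\circ r=pr\circ in\circ\pi$, which is the commutativity of diagram~\eqref{eq:retraction}. For the remaining identity, the unique extension $\tilde s$ restricts on $s(Y)$ to $\sigma=(\Id_Y,s)$, whence $r\circ s=q_1\circ\tilde s\circ s=q_1\circ(\Id_Y,s)=\Id_Y$. I do not expect any genuine obstacle here; the only step requiring a moment's care is confirming that $q_2:Q\to Y^h$ is affine and \'etale — this is precisely where the hypotheses that $Y$ is \'etale (hence affine) over $U$ and that $Y^h$ is affine are used, so that the henselian lifting property applies.
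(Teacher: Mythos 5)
Your proof is correct. The paper's own argument takes a closely related but technically distinct path: it exhibits $(\A^1_Y,\pi',s')$ --- with $\A^1_Y=\A^1_U\times_U Y$, $\pi'$ the projection to $\A^1_U$, and $s'$ induced by the inclusion $Y\hookrightarrow\A^1_U$ together with $\Id_Y$ --- as an object of the category $\wN(\A^1_U,Y)$, and then sets $r=pr_Y\circ can$, where $can:Y^h\to\A^1_Y$ is the canonical map supplied by the universal property of the projective limit defining $Y^h$. You instead base-change this \'etale cover along $\pi:Y^h\to\A^1_U$ to get $Q=Y\times_U Y^h=\A^1_Y\times_{\A^1_U}Y^h$, an affine \'etale $Y^h$-scheme, and invoke the henselian lifting property of the pair $(Y^h,s(Y))$ (stated explicitly in Section~\ref{sect:distinguishedLimit}) to extend the tautological section over $s(Y)$. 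The two mechanisms yield the same morphism $r$: your $\tilde s$ is exactly $(can,\Id_{Y^h})$. So the only difference is which characterization of $Y^h$ one cites --- the limit over $\wN(\A^1_U,Y)$ versus the henselian lifting property --- and both are readily available from Section~\ref{sect:distinguishedLimit}; the paper's route is marginally more direct since it avoids forming the fibre product $Q$.
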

\begin{proof}
As before, we may assume that $Y\subset\A_U^1$. Note that the morphism
\[
   \pi:=\Id\times(pr|_Y):\A^1_Y\to\A^1_U
\]
is \'etale. Let $s:Y\to\A^1_U\times_UY=\A^1_Y$ be the morphism induced by the embedding $Y\to\A^1_U$ and $\Id_Y$. Then $(\A_Y^1,\pi,s)\in\wN(\A_U^1,Y)$. Thus there is a canonical morphism $can:Y^h\to\A^1_Y$ such that $(\Id\times (pr|_Y))\circ can =\pi^h$. Set
\[
    r:= pr_Y\circ can: Y^h\to Y.
\]
With this $r$ diagram~\eqref{eq:retraction} commutes, and $r\circ s^h=\Id_Y$.
\end{proof}

We view $Y^h$ as a $Y$-scheme via $r$. Thus various subschemes of $Y^h$ also become $Y$-schemes. In particular, $\dot Y^h$ and $\dot Y_\bu^h$ are $Y$-schemes, and we can consider
\[
\bE(\dot Y^h)\subset\bG(\dot Y^h) \ \text{ and } \ \bE(\dot Y_\bu^h)\subset\bG(\dot Y_\bu^h)=\bG_\bu(\dot Y_\bu^h).
\]

\begin{lemma}\label{nastia}
\[
    \bG_\bu(\dot Y_\bu^h)=\bE(\dot Y_\bu^h)\bG_\bu(Y_\bu^h).
\]
\end{lemma}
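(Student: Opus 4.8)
The plan is to reduce Lemma~\ref{nastia} to the known factorization of rational points over a henselian discrete valuation ring, applied one connected component at a time. First I would unwind the structure of $Y_\bu^h$ and $\dot Y_\bu^h$. Since $\bu\cong\coprod_i\spec k(u_i)$ and (as arranged) $Y\subset\A_U^1$, the scheme $Y_\bu=Y\times_U\bu$ is a finite disjoint union of spectra of finite separable field extensions, each realized as a closed point of some $\A^1_{k(u_i)}$. Using property~(ii) of henselization (compatibility with disjoint unions) together with the fact that the henselization of $\spec k[t]$ along a closed point is the spectrum of the henselization of the corresponding local ring, i.e. of a henselian discrete valuation ring, I would write $Y_\bu^h=\coprod_j\spec\cO_j$ with each $\cO_j$ a henselian discrete valuation ring; writing $L_j$ for its fraction field, we get $\dot Y_\bu^h=\coprod_j\spec L_j$, because $s_\bu(Y_\bu)=\pi_\bu^{-1}(Y_\bu)$ consists exactly of the closed points of the $\spec\cO_j$.

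Next I would check that on each piece the hypotheses of~\cite[Fait~4.3, Lemma~4.5]{Gille:BourbakiTalk} are met. Viewing $\spec\cO_j$ as a $Y$-scheme via the retraction $r$ of Lemma~\ref{lm:retraction}, its image is connected and hence lies in a single connected component of $Y$, on which $\bP^+$ is a proper parabolic; so $\bP^+|_{\cO_j}$ is a proper parabolic subgroup scheme of $\bG|_{\cO_j}$, with opposite $\bP^-|_{\cO_j}$. By the commutativity of~\eqref{eq:retraction} the pull-back of $\bG$ to $\spec\cO_j$ along $r$ agrees with its pull-back along $in\circ\pi$, so $\bG|_{\cO_j}$ is a simple simply-connected $\cO_j$-group scheme carrying the proper parabolic $\bP^+|_{\cO_j}$. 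The subgroups $\bE(\cO_j)$ and $\bE(L_j)$ of Definition~\ref{EYi} (generated by the unipotent radicals $\bU^\pm$ of $\bP^\pm$) then coincide with the elementary subgroups of the cited result, which therefore yields $\bG(L_j)=\bE(L_j)\,\bG(\cO_j)$ for every $j$.

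Finally I would reassemble the pieces. Since $\bU^\pm$ are representable by vector bundles over $Y$, they carry finite disjoint unions of $Y$-schemes to direct products, and hence so does the functor $\bE$; therefore $\bG_\bu(\dot Y_\bu^h)=\prod_j\bG(L_j)$, $\bG_\bu(Y_\bu^h)=\prod_j\bG(\cO_j)$, and $\bE(\dot Y_\bu^h)=\prod_j\bE(L_j)$. Given $\gamma_\bu=(\gamma_j)_j$, I factor each $\gamma_j=\alpha_j\beta_j$ with $\alpha_j\in\bE(L_j)$ and $\beta_j\in\bG(\cO_j)$, and set $\alpha:=(\alpha_j)_j\in\bE(\dot Y_\bu^h)$ and $\beta_\bu:=(\beta_j)_j\in\bG_\bu(Y_\bu^h)$; then $\gamma_\bu=\alpha\cdot\beta_\bu|_{\dot Y_\bu^h}$, which is the asserted decomposition.

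The delicate step — the one that is not purely formal — is the first one: pinning down $Y_\bu^h$ as a finite disjoint union of spectra of henselian discrete valuation rings, and then checking that the $Y$-scheme structure coming from $r$ really turns $\bP^\pm|_{\cO_j}$ into genuinely opposite \emph{proper} parabolic subgroup schemes of the simple simply-connected group $\bG|_{\cO_j}$. Once this bookkeeping is done so that the black box of~\cite[Fait~4.3, Lemma~4.5]{Gille:BourbakiTalk} applies verbatim on each $\spec\cO_j$, the componentwise factorization and the reassembly are immediate.
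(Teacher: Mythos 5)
Your proposal is correct and follows essentially the same route as the paper: decompose $Y_\bu^h$ and $\dot Y_\bu^h$ into disjoint unions via property~(ii) of henselization, identify each piece with a henselian discrete valuation ring and its fraction field, apply Fait~4.3 and Lemma~4.5 of~\cite{Gille:BourbakiTalk} componentwise, and reassemble using the product decomposition of $\bG_\bu$ and $\bE$. The only cosmetic difference is that you spell out in more detail why the image of each $\spec\cO_j$ under $r$ lies in a single connected component of $Y$ and hence why $\bP^{\pm}|_{\cO_j}$ are genuinely opposite proper parabolics, a point the paper leaves implicit by simply recording that $\bG_{y^h}$ is isotropic.
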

\begin{proof}
Firstly, one has $Y_\bu=\coprod_{u\in\bu}\coprod_{y\in Y_\bu}y$. (Note that $Y_u$ is a finite scheme.) Thus by Lemma~\ref{lm:propii}, we have
\[
    Y_\bu^h=\coprod_{u\in\bu}\coprod_{y\in Y_u}y^h,\qquad
    \dot Y_\bu^h=\coprod_{u\in\bu}\coprod_{y\in Y_u}\dot y^h,
\]
where $(y^h,\pi^h_y,s^h_y)$ is the Henselization of the pair $(\A_\bu^1,y)$, $\dot y^h:=y^h-s_y^h(y)$. We see that $y^h$ and $\dot y^h$ are subschemes of $Y^h$, so we can view them as $Y$-schemes, and $\bG_{y^h}:=\bG_Y\times_Y{y^h}$ is isotropic. Also, $\bE(\dot y^h)$ makes sense as a subgroup of $\bG(\dot y^h)=\bG_u(\dot y^h)=\bG_{y^h}(\dot y^h)$.

There are equalities of the form
\[
\begin{split}
   \bG_\bu(\dot Y_\bu^h)&=\prod_{u\in\bu}\prod_{y\in Y_u}\bG_u(\dot y^h)=\prod_{u\in\bu}\prod_{y\in Y_u}\bG_{y^h}(\dot y^h),\\
   \bE(\dot Y_\bu^h)&=\prod_{u\in\bu}\prod_{y\in Y_u}\bE(\dot y^h),\\
   \bG_\bu(Y_\bu^h)&=\prod_{u\in\bu}\prod_{y\in Y_u}\bG_u(y^h)=\prod_{u\in\bu}\prod_{y\in Y_u}\bG_{y^h}(y^h).
\end{split}
\]
Thus, to prove the lemma it suffices for each $u\in\bu$ and each $y\in Y_u$ to check the equality
\[
   \bG_{y^h}(\dot y^h)=\bE(\dot y^h)\bG_{y^h}(y^h).
\]
Note that $y^h=\spec\mathcal O$, where $\mathcal O=k(u)[t]_{\mathfrak m_y}^h$ is a Henselian discrete valuation ring, and $\mathfrak m_y\subset k(u)[t]$ is the maximal ideal defining the point $y\in\A^1_u$. (Without loss of generality we can assume that $y$ is not the infinite point of $\P^1_u$.) Further, $\dot y^h=\spec L$, where $L$ is the fraction field of $\mathcal O$. Also, $\bG_{y^h}$ is isotropic. Thus, the equality follows from~\cite[Lemma~4.5(1)]{Gille:BourbakiTalk} in view of our definition of $\bE$ and~\cite[Fait~4.3(2)]{Gille:BourbakiTalk}.
\end{proof}

By Lemma~\ref{lm:surjectivity} and Proposition~\ref{pr:affine}\eqref{c} the restriction map $\bE(\dot Y^h)\to\bE(\dot Y_\bu^h)$ is surjective. Since $\bE(\dot Y^h)\subset\bG(\dot Y^h)$, the proposition follows. \emph{This completes the proof of Theorem~\ref{MainThm2}}.

\section{An application}\label{sect:application}
The following result is a straightforward consequence of Theorem~\ref{MainThm1} and an exact sequence for \'etale cohomology.
Recall that by our definition a reductive group scheme has geometrically connected fibres.
\begin{theorem}\label{Norms}
Let $R$ be a regular local ring containing an infinite field and $\bG$ be a reductive $R$-group scheme. Let $\mu:\bG\to\bT$ be a group scheme morphism to an $R$-torus $\bT$ such that $\mu$ is locally in the \'{e}tale topology on $\spec R$ surjective. Assume further that the $R$-group scheme $\bH:=\Ker(\mu)$ is reductive. Let $K$ be the fraction field of $R$. Then the group homomorphism
\[
\bT(R)/\mu(\bG(R))\to\bT(K)/\mu(\bG(K))
\]
is injective.
\end{theorem}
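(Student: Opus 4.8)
The plan is to derive the statement from the short exact sequence of $R$-group schemes
\[
1\longrightarrow\bH\longrightarrow\bG\stackrel{\mu}{\longrightarrow}\bT\longrightarrow1,
\]
feeding the reductive group scheme $\bH=\Ker(\mu)$ into Theorem~\ref{MainThm1}. First I would record that this is an exact sequence of sheaves of groups on the small \'etale site of $\spec R$: by hypothesis $\mu$ is surjective locally in the \'etale topology, so $\bG\to\bT$ is an epimorphism of \'etale sheaves with kernel $\bH$; the same holds after base change to $\spec K$. Since $\bH$, $\bG$, $\bT$ are smooth affine, \'etale and fppf $H^1$ agree for them, so the cohomology occurring below is exactly the one in Theorem~\ref{MainThm1}.

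Next I would write the associated long exact sequences of pointed sets over $R$ and over $K$ and arrange them into the commutative ladder
\[
\begin{CD}
\bG(R) @>\mu>> \bT(R) @>\delta_R>> H^1_{\text{\'et}}(R,\bH)\\
@VVV @VVV @VVV\\
\bG(K) @>\mu>> \bT(K) @>\delta_K>> H^1_{\text{\'et}}(K,\bH),
\end{CD}
\]
the vertical maps being restriction along $\spec K\to\spec R$ and $\delta_R$, $\delta_K$ the connecting maps (sending $t$ to the class of the $\bH$-torsor $\mu^{-1}(t)$). Exactness at $\bT(R)$ says that $\delta_R(t)$ is trivial precisely when $t\in\mu(\bG(R))$, and likewise over $K$. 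Because $\bT$ is commutative and $\mu$ a homomorphism, $\mu(\bG(R))\subseteq\bT(R)$ and $\mu(\bG(K))\subseteq\bT(K)$ are subgroups, the quotients are abelian groups, and the map in the statement is a group homomorphism; hence it suffices to show its kernel is trivial.

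Then I would take $t\in\bT(R)$ whose class maps to the identity in $\bT(K)/\mu(\bG(K))$, i.e. $t\in\mu(\bG(K))$, so that $\delta_K(t)$ is trivial. By commutativity of the ladder, the image of $\delta_R(t)$ in $H^1_{\text{\'et}}(K,\bH)$ equals $\delta_K(t)$ and is therefore trivial; that is, $\delta_R(t)$ lies in the kernel of $H^1_{\text{\'et}}(R,\bH)\to H^1_{\text{\'et}}(K,\bH)$. This is the only nontrivial step, and it is precisely where Theorem~\ref{MainThm1} enters: $\bH$ is a reductive group scheme over the regular semi-local domain $R$ containing an infinite field, so that kernel is trivial, whence $\delta_R(t)$ is trivial. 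Exactness at $\bT(R)$ then gives $t\in\mu(\bG(R))$, i.e. the class of $t$ was already the identity, and injectivity follows.

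I do not anticipate a genuine obstacle here: all the real content sits in Theorem~\ref{MainThm1} applied to $\bH$, together with the standard exact cohomology sequence of $1\to\bH\to\bG\to\bT\to1$. The only points deserving a sentence of justification are the exactness of this sequence in the \'etale topology (from the \'etale-local surjectivity of $\mu$ and the smoothness of $\bH$) and the functoriality of the connecting homomorphism under the base change $R\to K$; the passage from ``trivial kernel'' to ``injective'' is automatic for a homomorphism of groups. That $R$ is only assumed semi-local rather than local is harmless, since Theorem~\ref{MainThm1} is stated at that level of generality.
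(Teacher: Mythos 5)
Your proof is correct and is exactly the argument the paper has in mind: the paper simply states that Theorem~\ref{Norms} is ``a straightforward consequence of Theorem~\ref{MainThm1} and an exact sequence for \'etale cohomology,'' and your write-up spells out that deduction via the ladder of long exact sequences attached to $1\to\bH\to\bG\to\bT\to1$ over $R$ and over $K$, with Theorem~\ref{MainThm1} applied to the reductive kernel $\bH$.
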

\begin{proof}
We have a commutative diagram whose rows are exact in the sense that in each row the image of $\mu$ coincides with the kernel of $\nu$.
\[
\begin{CD}
\bG(R) @>\mu>> \bT(R) @>\nu>> H^1_{\text{\'et}}(R,\bH) \\
@VVV @VVV @VVV\\
\bG(K) @>\mu>> \bT(K) @>\nu>> H^1_{\text{\'et}}(K,\bH).
\end{CD}
\]
By Theorem~\ref{MainThm1} the right vertical arrow has trivial kernel. Now a simple diagram chase completes the proof.
\end{proof}

This theorem extends all the known results of this form proved in~\cite{C-TO}, \cite{PS}, \cite{Z}, \cite{OPZ}. Theorem~\ref{Norms} has the following corollary.
\begin{corollary*}
Under the hypothesis of Theorem~\ref{Norms} let additionally the $K$-algebraic group $\bG_K$ be $K$-rational as a $K$-variety and let the ring $R$ be of characteristic $0$. Then the norm principle holds for all finite flat $R$-domains $S\supset R$.
That is, if $S\supset R$ is such a domain, and $a\in\bT(S)$ belongs to $\mu(\bG(S))$, then the element $N_{S/R}(a)\in\bT(R)$ belongs to $\mu(\bG(R))$.
\end{corollary*}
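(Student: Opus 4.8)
The plan is to derive the corollary from Theorem~\ref{Norms} together with the norm principle for rational reductive groups over fields of characteristic zero.

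First I would reduce to the generic fibre. Given a finite flat $R$-domain $S\supset R$ and an element $a\in\bT(S)$ with $a=\mu(g)$ for some $g\in\bG(S)$, the goal is to show $N_{S/R}(a)\in\mu(\bG(R))$. Since Theorem~\ref{Norms} asserts that the homomorphism $\bT(R)/\mu(\bG(R))\to\bT(K)/\mu(\bG(K))$ is injective, it is enough to check that $N_{S/R}(a)$, viewed in $\bT(K)$ via the inclusion $\bT(R)\hookrightarrow\bT(K)$, lies in $\mu(\bG(K))$.

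Next I would identify the generic fibre. As $S$ is a domain that is finite and flat over $R$, the $K$-algebra $L:=S\otimes_RK$ is the localization of $S$ at $R\setminus\{0\}$, hence a domain; being moreover finite over $K$, it is a finite field extension of $K$ (separable, as $R$, and hence $K$, has characteristic zero). The norm map $N_{S/R}$ on the torus $\bT$ is compatible with the flat base change $R\to K$, so the image of $N_{S/R}(a)$ in $\bT(K)$ equals $N_{L/K}(a_L)$, where $a_L\in\bT(L)$ denotes the image of $a$; and since $a=\mu(g)$ we have $a_L=\mu(g_L)\in\mu(\bG(L))$ with $g_L\in\bG(L)$ the image of $g$. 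I would then invoke the field-level norm principle: $\bG_K$ is reductive and $K$-rational and $K$ has characteristic zero, so by the norm principle for rational reductive groups over fields of characteristic zero (A.~Merkurjev; see also P.~Gille), applied to $\mu_K\colon\bG_K\to\bT_K$ and the extension $L/K$, one has $N_{L/K}\bigl(\mu(\bG(L))\bigr)\subseteq\mu(\bG(K))$. Hence $N_{L/K}(a_L)\in\mu(\bG(K))$, and combining this with the first two steps gives $N_{S/R}(a)\in\mu(\bG(R))$, as desired.

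The hard part, and essentially the only nontrivial input, is the field-level norm principle used in the last step; the two reductions are formal. This is precisely where both extra hypotheses — $K$-rationality of $\bG_K$ and characteristic zero — enter. Were one to want a self-contained argument, one would have to reprove Merkurjev's theorem, reducing by dévissage to the cases of a semisimple simply connected group (where $\bG(L)$ is generated by the $L$-points of unipotent root subgroups and the principle is elementary) and of a torus, and exploiting $K$-rationality of $\bG_K$ to control $R$-equivalence classes; but since the statement only claims this to be a straightforward consequence, citing the field norm principle suffices.
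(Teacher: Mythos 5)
Your proposal is correct and follows essentially the same route as the paper's own proof: pass to the fraction fields $L$ of $S$ and $K$ of $R$, identify $(N_{S/R}(a))_K$ with $N_{L/K}(a_L)$, apply Merkurjev's norm principle for the $K$-rational group $\bG_K$ in characteristic zero to obtain $N_{L/K}(a_L)\in\mu(\bG(K))$, and then conclude via the injectivity statement of Theorem~\ref{Norms}. The only difference is that you spell out the identification $L=S\otimes_RK$ and the separability of $L/K$ somewhat more explicitly than the paper does; the substance and the key input (the field-level norm principle of Merkurjev) are the same.
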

\begin{proof}
Let $L$ be the fraction field of $S$. Let $\alpha\in\bG(S)$ be such that $\mu(\alpha)=a\in\bT(S)$.
Then $\mu(\alpha_L)=a_L\in\bT(L)$, where $\alpha_L$ is the image of $\alpha$ in $\bG(L)$, $a_L$ is the image of $a$ in $\bT(L)$. The hypothesis on the algebraic $K$-group $\bG_K$ implies that there exists an element $\beta\in\bG(K)$ such that $\mu(\beta)=N_{L/K}(a_L)\in\bT(K)$ (see~\cite{Merk}). Note that $N_{L/K}(a_L)=(N_{S/R}(a))_K\in\bT(K)$. By Theorem~\ref{Norms} there exists an element $\gamma\in\bG(R)$ such that $\mu(\gamma)=N_{S/R}(a)\in\bT(R)$. Whence the corollary.
\end{proof}

\bibliographystyle{alphanum}
\bibliography{GrSerre}
\end{document}